\newtheoremstyle{dotless}{}{}{\itshape}{}{\bfseries}{}{\newline}{}
\newtheoremstyle{definition}{}{}{}{}{\bfseries}{}{\newline}{}
\theoremstyle{dotless}
\newtheorem{corollary}{Corollary}[section]
\newtheorem{lemma}[corollary]{Lemma}
\newtheorem{theorem}[corollary]{Theorem}
\newtheorem{proposition}[corollary]{Proposition}
\newtheorem*{theorem*}{Theorem}
\theoremstyle{definition}
\newtheorem{construction}[corollary]{Construction}
\newtheorem{definition}[corollary]{Definition}
\newtheorem{remark}[corollary]{Remark}
\newtheorem{example}[corollary]{Example}
\newtheorem{notation}[corollary]{Notation}
\newtheorem{algorithm}[corollary]{Algorithm}
\newcommand{\mult}{\operatorname{mult}}
\newcommand{\ft}{\operatorname{ft}}
\newcommand{\ev}{\operatorname{ev}}
\newcommand{\degree}{\operatorname{deg}}
\newcommand{\val}{\operatorname{val}}
\newcommand{\coker}{\operatorname{coker}}
\newcommand{\changelocaltocdepth}[1]{
  \addtocontents{toc}{\protect\setcounter{tocdepth}{#1}}%
  \setcounter{tocdepth}{#1}%
}
\def\l@subsection{\@tocline{2}{0pt}{2.6pc}{5pc}{}}
\begin{document}
\pagenumbering{arabic}
\setcounter{tocdepth}{0}

\author{Christoph Goldner}
\address{ Eberhard Karls Universit\"{a}t T\"{u}bingen, Germany}
\email{\href{mailto:christoph.goldner@math.uni-tuebingen.de}{christoph.goldner@math.uni-tuebingen.de}}

\title{Counting tropical rational curves with cross-ratio constraints}

\keywords{Tropical geometry, enumerative geometry, cross-ratios, floor diagrams, lattice path algorithm, degenerations}
\subjclass[2010]{14N10, 14T05}
\date{\today}

\begin{abstract}
We enumerate rational curves in toric surfaces passing through points and satisfying cross-ratio constraints using tropical and combinatorial methods. Our starting point is \cite{IlyaCRC}, where a tropical-algebraic correspondence theorem was proved that relates counts of rational curves in toric varieties that satisfy point conditions and cross-ratio constraints to the analogous tropical counts. We proceed in two steps: based on tropical intersection theory we first study tropical cross-ratios and introduce degenerated cross-ratios. Second we provide a lattice path algorithm that produces all tropical curves satisfying such degenerated conditions explicitly. In a special case simpler combinatorial objects, so-called cross-ratio floor diagrams, are introduced which can be used to determine these enumerative numbers as well.
\end{abstract}

\maketitle

\tableofcontents

\changelocaltocdepth{1}
\section*{Introduction}

\textit{Tropical geometry} is a rather young field of mathematics that is intimately connected to algebraic geometry, non-Archimedean analytic geometry and combinatorics. In the past tropical geometry turned out to be a powerful tool to answer enumerative questions. To apply tropical geometry to enumerative questions, so-called correspondence theorems are needed. A correspondence theorem states that an enumerative number equals its tropical counterpart, where in tropical geometry we have to count each tropical object with a suitable multiplicity reflecting the number of classical objects in our counting problem that tropicalize to the given tropical object.
Thus tropical geometry hands us a new approach to enumerative problems: first find a suitable correspondence theorem, then use combinatorics to enumerate the tropical objects in question. A famous example is the following: let $d\in\mathbb{N}_{>0}$ be a degree and assume that points in general position in $\mathbb{P}^2$ are given in  such a way that only finitely many rational plane curves of degree $d$ pass through these points. What is the number $N_d$ of curves passing through these points? For $d\leq 5$, this question can be answered using methods from classical algebraic geometry. In the '90s, Kontsevich presented a recursive formula that can compute $N_d$ for arbitrary $d$ \cite{KontsevichOriginal}. Tropical geometry offers a new approach to compute the numbers $N_d$, and generalizations thereof: in \cite{MikhalkinFundamental}, Mikhalkin pioneered the use of tropical methods in enumerative geometry by proving a correspondence theorem for counts of curves in toric surfaces satisfying point conditions. 

\textit{Moduli spaces} of (stable) curves resp.\ maps to toric surfaces are an important tool in enumerative geometry, both in algebraic and in tropical geometry. Often, an enumerative problem can be expressed as an intersection product on the moduli space parametrizing the objects to be counted.
Gathmann and Markwig started to use tropical moduli space techniques in order to give a tropical proof of Kontsevich's formula in \cite{KontsevichPaper}. 
Both in the original proof of Kontsevich and in this tropical proof, the count of rational plane curves of degree $d$ satisfying point, line and a cross-ratio condition is an essential ingredient.

A \textit{cross-ratio} is a rational number associated to four collinear points. It encodes the relative position of these four points to each other. It is invariant under projective transformations and can therefore be used as a constraint that four points on $\mathbb{P}^1$ should satisfy. So a cross-ratio can be viewed as a condition on elements of the moduli space of $n$-pointed rational stable maps to a toric variety. Tropical cross-ratios were first introduced by Mikhalkin under the name ``tropical double ratio'' in \cite{MikhalkinCRC} and can be thought of as paths of fixed lengths in a tropical curve. More precisely: A plane tropical curve is a $1$-dimensional polyhedral complex (mapped to $\mathbb{R}^2$ satisfying the balancing condition) whose unbounded polyhedra (points on a tropical curve are contracted unbounded polyhedra) are uniquely labeled (see definition \ref{definition:degree}) and a tropical cross-ratio is given by $4$ labels and a length such that forgetting all unbounded polyhedra which are not given in the cross-ratio leaves a tropical curve whose bounded parts' lengths sum up to the given length in the cross-ratio -- see Figure \ref{Example_Introduction}. It is natural to ask: Given point conditions $p_1,\dots,p_n$ and cross-ratio constraints $\lambda_1,\dots,\lambda_l$ in such a way that there are only finitely many rational (tropical) curves of a given degree in a toric surface satisfying them, then
\begin{enumerate}[label=(\arabic*),ref=(\arabic*)]
\item \label{Q1} \textit{How many of these curves are there?}
\item \label{Q2} \textit{Can we construct them?}
\end{enumerate}
These questions motivated the study in this paper. Recall that applying tropical geometry to an enumerative problem happens in two steps: use a correspondence theorem, then use combinatorics. The correspondence theorem we are going to use is provided by Tyomkin in \cite{IlyaCRC}. Our approach to answer questions \ref{Q1} and \ref{Q2} can be subdivided into two steps. The first step is to develop a notion of \textit{degenerated} tropical cross-ratios that helps us to simplify the combinatorics. The second step is to explicitly construct all rational tropical curves that satisfy the given point and degenerated cross-ratio conditions using combinatorial methods. We want to explain these two steps and the methods used more precisely:

\begin{figure}
\centering
\def\svgwidth{300pt}
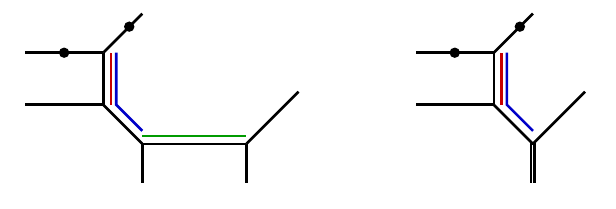
\caption{Left: A degree two plane tropical curve that is fixed by two points $q_1,q_2$ and three cross-ratios: the red length associated to the four labels $(16|23)$, the blue length associated to the four labels $(1q_2|35)$ and the green length associated to the four labels $(23|45)$ are fixed. Right: Degenerating the cross-ratio associated to the green path means shrinking the green path, thus producing a $4$-valent vertex (with two unbounded edges on top of each other).}
\label{Example_Introduction}
\end{figure}

\subsection*{Degenerated cross-ratios}
In section \ref{section:tropical_cross-ratios} a generalization of Mikhalkin's definition of tropical cross-ratios is introduced that allows us to use tropical intersection theory in order to degenerate tropical cross-ratios. If we think of a cross-ratio as a path of fixed length in a tropical curve, then a degenerated cross-ratio is a path of length zero -- see Figure \ref{Example_Introduction}. Obviously, the set of tropical curves satisfying given conditions becomes easier when degenerating the cross-ratios. The difficult part is to determine the multiplicities with which we have to count such curves. These multiplicities have a local description, which we present in theorem \ref{thm:ZSFSSG_section_2} together with the fact that the number of tropical curves satisfying point and cross-ratio conditions stays invariant when degenerating the cross-ratios. The techniques used to prove theorem \ref{thm:ZSFSSG_section_2} are \textit{tropical moduli spaces} and \textit{tropical intersection theory}. 

Moduli spaces of abstract rational tropical curves were studied in \cite{MikhalkinCRC}. They also show up in the study of the tropical Grassmannian as the space of trees \cite{TropicalGrassmannian, AK}.  
It turns out that these tropical moduli spaces are tropicalizations of the corresponding moduli spaces in algebraic geometry in a suitable embedding \cite{GM, Tev}. Tropicalizations of moduli spaces of curves of higher genus (in a toroidal and non-Archimedean setting) were studied by Abramovich, Caporaso and Payne \cite{AbramovichCaporasoPayne}. The theory of rational tropical stable maps was introduced by Gathmann, Kerber and Markwig in \cite{GathmannKerberMarkwig}. Recently, Ranganathan \cite{DhruvStableMapsOne} tropicalized the moduli space of stable rational maps to toric surfaces using logarithmic and non-Archimedean geometry. An excellent overview of the current development concerning compactifications of moduli spaces and tropical moduli spaces can be found in \cite{ICM2018}.

We use tropical intersection theory on moduli spaces of rational stable maps, building on Allermann and Rau \cite{FirstStepsIntersectionTheory, JohannesIntersectionsonTropModuliSpaces}. Katz \cite{Katz2012} related tropical intersection theory to intersection theory on toric varieties studied by Fulton and Sturmfels in \cite{FultonSturmfels}. For matroidal fans (i.e. tropicalizations of linear spaces) Shaw offers in \cite{IntersectionMatroidalFans} a framework of tropical intersection theory. Tropical intersection theory is still an active area of research.

All in all degenerating cross-ratios is a natural approach in the following sense: A tropical curve satisfying non-degenerated conditions can be \textit{degenerated} to a curve that satisfies degenerated conditions itself. This observation allows us to answer question \ref{Q2} if we can construct tropical curves that satisfy degenerated conditions. We offer an algorithm for this construction in section \ref{section:cross-ratio_lattice_path_algorithm}.

\subsection*{Combinatorial methods} Both the lattice path algorithm and floor diagrams are well-known combinatorial tools in tropical geometry. In section \ref{section:cross-ratio_lattice_path_algorithm} we generalize the \textit{lattice path algorithm} to a \textit{cross-ratio lattice path algorithm}. Lattice paths were used in \cite{MikhalkinLatticePaths} and \cite{MikhalkinFundamental} to construct curves satisfying point conditions. Since we want to find tropical curves that satisfy point and degenerated cross-ratio conditions, we need to generalize this approach. There are other generalizations (in particular \cite{rag-rug}) of lattice paths that inspired our definition of cross-ratio lattice paths. The lattice path algorithm can also be extended to determine invariants connected to counts of real curves as well, see \cite{Shustin}. 

 In section \ref{section:duality:tropical_curves_subdivisions} we prove theorem \ref{theorem:same_numbers}, which states that the lattice path algorithm yields the number of tropical curves satisfying point and cross-ratio conditions. Thus theorem \ref{theorem:same_numbers} answers question \ref{Q1}. Moreover, the cross-ratio lattice path algorithm we provide allows us to construct all tropical curves of a given degree that satisfy the given point conditions and the degenerated cross-ratio constraints.

In section \ref{section:special_case:floor_diagrams_cross-ratios} we restrict to curves in Hirzebruch surfaces and impose a restriction to our cross-ratios such that we can use simpler combinatorial objects than the ones we deal with when applying the cross-ratio lattice path algorithm. These simpler combinatorial objects are called \textit{cross-ratio floor diagrams}. They are a generalization of \textit{floor diagrams}. Floor diagrams are graphs that arise from so-called floor decomposed tropical curves by forgetting some information. Floor diagrams were introduced by Mikhalkin and Brugall\'{e} in \cite{MikhalkinBrugalleFloorDiagIntroduction} (and \cite{MikhalkinBrugalle}) to give a combinatorial description of Gromov-Witten invariants of Hirzebruch surfaces. Floor diagrams have also been used to establish polynomiality of the node polynomials \cite{MikhalkinFomin} and to give an algorithm to compute these polynomials in special cases -- see \cite{BlockNodePoly}. Moreover, floor diagrams have been generalized, for example in case of $\Psi$-conditions, see \cite{PsiFloorDiagrams}, or for counts of curves relative to a conic \cite{Bru}.

Theorem \ref{theorem:CR_count=floor_diag_count} states that counting floor diagrams yields the same numbers as counting tropical curves that satisfy point and cross-ratio conditions. Hence floor diagrams offer (besides the cross-ratio lattice path algorithm) another (simpler) way of answering question \ref{Q1}.

\subsection*{Acknowledgements}
The author is indebted to Hannah Markwig for interesting discussions and many suggestions.
The author would like to thank Ilya Tyomkin for interesting discussions.
This work was partially completed during the program ``Tropical Geometry, Amoebas and Polytopes" at the Institute
Mittag-Leffler in spring 2018. The author would like to thank the institute for its hospitality.
The author gratefully acknowledges support by DFG-collaborative research center TRR 195 (INST 248/237-1).

\changelocaltocdepth{2}
\section{Preliminaries}\label{section:preliminaries}

In this preliminary section we give a short introduction to tropical intersection theory and tropical moduli spaces as needed in this paper.
We fix the following conventions: polytopes are convex, and we work over a non-Archimedean closed field of characteristic zero.

\subsection*{Tropical intersection theory}
This subsection summarizes intersection theoretic background from  \cite{Allermann, FirstStepsIntersectionTheory, AllermannHampeRau}. 

\begin{definition}[Normal vectors and balanced fans]
Let $V:=\Gamma\otimes_{\mathbb{Z}}\mathbb{R}$ be the real vector space associated to a given lattice $\Gamma$ and let $X$ be a fan in $V$. The lattice generated by $\operatorname{span}(\kappa)\cap\Gamma$, where $\kappa$ is a cone of $X$, is denoted by $\Gamma_\kappa$. Let $\sigma$ be a cone of $X$ and $\tau$ be a face of $\sigma$ of dimension $\dim(\tau)=\dim(\sigma)-1$ (we write $\tau<\sigma$). A vector $u_{\sigma}\in\Gamma_\sigma$ that generates $\Gamma_\sigma / \Gamma_\tau$ such that $u_\sigma+\tau\subset\sigma$ defines a class $u_{\sigma / \tau}:=[u_\sigma]\in\Gamma_\sigma / \Gamma_\tau$ that does not depend on the choice of $u_\sigma$. This class is called \textit{normal vector of $\sigma$ relative to $\tau$}.

$X$ is a \textit{weighted fan of dimension $k$} if $X$ is of pure dimension $k$ and there are weights on its facets, that is there is a map $\omega_X:X^{(k)}\to\mathbb{Z}$. The number $\omega_X(\sigma)$ is called \textit{weight} of the facet $\sigma$ of $X$. To simplify notation, we write $\omega(\sigma)$ if $X$ is clear. Moreover, a weighted fan $(X,\omega_X)$ of dimension $k$ is called a \textit{balanced} fan of dimension $k$ if
\begin{align*}
\sum_{\sigma \in X^{(k)}, \tau < \sigma} \omega(\sigma)\cdot u_{\sigma / \tau} = 0
\end{align*}
holds in $V/\langle\tau\rangle_{\mathbb{R}}$ for all faces $\tau$ of dimension $\dim(\tau)=\dim(\sigma)-1$.
\end{definition}

\begin{definition}[Group of affine cycles]
Let $V:=\Gamma\otimes_{\mathbb{Z}}\mathbb{R}$ be the real vector space associated to a given lattice $\Gamma$. A \textit{tropical fan (of dimension $k$)} is a balanced fan of dimension $k$. $[(X,\omega_X)]$ denotes the refinement class of a tropical fan $X$ with weights $\omega_X$.
\end{definition}

\begin{definition}[Rational functions]
Let $C$ be an affine $k$-cycle. A \textit{(non-zero) rational function on $C$} is a continuous piecewise linear function $\varphi:|C|\to\mathbb{R}$, i.e. there exists a representative $(X,\omega_X)$ of $C$ such that on each cone $\sigma\in X$ the map $\varphi$ is the restriction of an integer affine linear function. The set of (non-zero) rational functions of $C$ is denoted by $\mathcal{K}^*(C)$. Define $\mathcal{K}(C):=\mathcal{K}^*(C)\cup \{-\infty\}$ such that $(\mathcal{K}(C),\operatorname{max},+)$ is a semifield, where the constant function $-\infty$ is the ``zero" function.
\end{definition}

\begin{definition}[Divisor associated to a rational function]\label{definition:Assoc_Weil_Div}
Let $C$ be an affine $k$-cycle in $V=\Gamma\otimes_{\mathbb{Z}}\mathbb{R}$ and $\varphi\in\mathcal{K}^*(C)$ a rational function on $C$. Let $(X,\omega)$ be a representative of $C$ on whose cones $\varphi$ is affine linear and denote these linear pieces by $\varphi_\sigma$. We denote by $X^{(i)}$ the set of all $i$-dimensional cones of $X$. We define $\operatorname{div}(\varphi):=\varphi\cdot C:= [(\bigcup_{i=0}^{k-1}X^{(i)},\omega_{\varphi})]\in Z^{\textrm{aff}}_{k-1}(C)$, where
\begin{align*}
\omega_\varphi : X^{(k-1)} &\to \mathbb{Z}\\
\tau &\mapsto\sum_{\sigma \in X^{(k)}, \tau < \sigma} \varphi_\sigma(\omega(\sigma)v_{\sigma/\tau})-\varphi_\tau \left( \sum_{\sigma \in X^{(k)}, \tau < \sigma}\omega(\sigma)v_{\sigma/\tau}\right)
\end{align*}
and the $v_{\sigma/\tau}$ are arbitrary representatives of the normal vectors $u_{\sigma/\tau}$. If $D$ is an affine $k$-cycle in $C$, we define $\varphi\cdot D:=\varphi\mid_{|D|}\cdot D$.
\end{definition}

\begin{example}\label{example:pull_back_0_M_0,4}
Let $[(X,\omega_X)]$ be the affine $1$-cycle with representative $(X,\omega_X)$ whose weights are all $1$ and whose $1$-dimensional rays are given by $-e_x,-e_y,e_x+e_y$, where $e_x,e_y$ are the vectors of the standard basis of $\mathbb{R}^2$ such that $X\subset\mathbb{R}^2$. Then
\begin{align*}
\varphi: X &\to \mathbb{R}\\
(x,y)&\mapsto \max(x,y,0)
\end{align*}
is a rational function on $[(X,\omega_X)]$ and $(X,\omega_X)$ is a representative such that $\varphi$ is integer linear affine on each cone. The divisor associated to $\varphi$, namely $\varphi\cdot X$, is given by the $1$-skeleton of $X$ which is just one point (namely $0\in\mathbb{R}^2$) and that point has weight $1$. We calculate this weight as an example: Let $\tau=0\in\mathbb{R}^2$, $\sigma_1=\operatorname{cone}\left( -e_x  \right),\sigma_2=\operatorname{cone}\left(-e_y\right)$ and $\sigma_3=\operatorname{cone}\left(e_x+e_y\right)$ be cones of $X$. Applying definition \ref{definition:Assoc_Weil_Div}, we get
\begin{align*}
\omega_\varphi(\tau)&=\varphi_{\sigma_1}\left(\omega(\sigma_1)v_{\sigma_1/\tau}\right)
+\varphi_{\sigma_2}\left(\omega(\sigma_2)v_{\sigma_2/\tau}\right)
+\varphi_{\sigma_3}\left(\omega(\sigma_3)v_{\sigma_3/\tau}\right)\\
&\quad -\varphi_\tau\left( \omega(\sigma_1)v_{\sigma_1/\tau} + \omega(\sigma_2)v_{\sigma_2/\tau} + \omega(\sigma_3)v_{\sigma_3/\tau} \right)\\
&=\varphi_{\sigma_3}\left(\omega(\sigma_3)v_{\sigma_3/\tau}\right)\\
&=\varphi_{\sigma_3}\left( 1 (e_x+e_y) \right) = 1
\end{align*}
because $\varphi_{\sigma_1},\varphi_{\sigma_2},\varphi_{\tau}\equiv 0$ and $\varphi_{\sigma_3}\left(e_x+e_y \right)=\max(1,1,0)$.
\end{example}

\begin{definition}[Affine intersection product]
Let $C$ be an affine $k$-cycle. The subgroup of globally linear functions in $\mathcal{K}^*(C)$ with respect to $+$ is denoted by $\mathcal{O}^*(C)$. We define the \textit{group of affine Cartier divisors of $C$} to be the quotient group $\operatorname{Div}(C):=\mathcal{K}^*(C)/\mathcal{O}^*(C)$. Let $[\varphi]\in\operatorname{Div}(C)$ be a Cartier divisor. The divisor associated to this function is denoted by $\operatorname{div}([\varphi]):=\operatorname{div}(\varphi)$ and is well-defined. The following bilinear map is called \textit{affine intersection product}
\begin{align*}
\cdot\, : \operatorname{Div}(C)\times Z^{\textrm{aff}}_k(C) &\to Z^{\textrm{aff}}_{k-1}(C)\\
([\varphi],D) &\mapsto [\varphi]\cdot D := \varphi\cdot D.
\end{align*}
\end{definition}

\begin{definition}[Morphisms of fans]
Let $X$ be a fan in $V=\Gamma\otimes_{\mathbb{Z}}\mathbb{R}$ and $Y$ a fan in $V'=\Gamma'\otimes_{\mathbb{Z}}\mathbb{R}$. A \textit{morphism} $f:X\to Y$ is a $\mathbb{Z}$-linear map from $|X|\subseteq V$ to $|Y|\subseteq V'$ induced by a $\mathbb{Z}$-linear map on the lattices. A \textit{morphism of weighted fans} is a morphism of fans. A \textit{morphism of affine cycles} $f:[(X,\omega_X)]\to [(Y,\omega_Y)]$ is a morphism of weighted fans $f:X^*\to Y^*$ and does not depend on the choice of representatives.
\end{definition}

\begin{definition}[Push-forward of affine cycles]
Let $V=\Gamma\otimes_{\mathbb{Z}}\mathbb{R}$ and $V'=\Gamma'\otimes_{\mathbb{Z}}\mathbb{R}$. Let $[X]\in Z^{\textrm{aff}}_m(V)$ and $[Y]\in Z^{\textrm{aff}}_n(V')$ be cycles with representatives $(X,\omega_X)$ and $Y$. Let $f:X\to Y$ be a morphism. Choosing a refinement of $(X,\omega_X)$, the set of cones
\begin{align*}
f_*X:=\{f(\sigma)\mid \sigma\in X \textrm{ contained in a maximal cone of $X$ on which $f$ is injective} \}
\end{align*}
is a tropical fan in $V'$ of dimension $m$ with weights
\begin{align*}
\omega_{f_*X}(\sigma'):=\sum_{\sigma\in X^{(m)}: \, f(\sigma)=\sigma'} \omega_X(\sigma)\cdot |\Gamma_{\sigma'}'/f(\Gamma_\sigma)|
\end{align*}
for all $\sigma'\in f_*X^{(m)}$. The equivalence class of $(f_*X,\omega_{f_*X})$ is uniquely determined by the equivalence class of $(X,\omega_X)$.
For $[(Z,\omega_Z)]\in Z^{\textrm{aff}}_k([X])$ we define
\begin{align*}
f_*[(Z,\omega_Z)]:=[(f_*(Z^*),\omega_{f_*(Z^*)})]\in Z^{\textrm{aff}}_k([Y])
\end{align*}
The map
\begin{align*}
Z^{\textrm{aff}}_k([X])\to Z^{\textrm{aff}}_k([Y]), \, C\mapsto f_*C
\end{align*}
is well-defined, $\mathbb{Z}$-linear and $f_*C$ is called \textit{push-forward of $C$ along $f$}.
\end{definition}

\begin{definition}[Pull-back of Cartier divisors]
Let $C\in Z^{\textrm{aff}}_m(V)$ and $D\in Z^{\textrm{aff}}_n(V')$ be cycles in $V=\Gamma\otimes_{\mathbb{Z}}\mathbb{R}$ and $V'=\Gamma'\otimes_{\mathbb{Z}}\mathbb{R}$. Let $f:C\to D$ be a morphism. The map
\begin{align*}
\operatorname{Div}(D)&\to\operatorname{Div}(C)\\
[h]&\mapsto f^*[h]:=[h\circ f]
\end{align*}
is well-defined, $\mathbb{Z}$-linear and $f^*[h]$ is called \textit{pull-back of $[h]$ along $f$}.
\end{definition}

\begin{definition}[Rational equivalence and Chow groups]
Let $C$ be an (abstract) cycle. Let $R(C):=\{(|C|,h)\mid h \textrm { bounded}\}\subseteq\operatorname{Div}(C)$ be the subgroup of Cartier divisors on $C$ globally given by a bounded rational function and
\begin{align*}
\operatorname{Pic}(C):=\operatorname{Div}(C)/R(C)
\end{align*}
be the \textit{Picard group} of $C$, where pull-backs induced by pull-backs of Cartier divisors are well-defined. Let $D,D'$ be subcycles of $C$. We call $D$ \textit{rationally equivalent to zero on} $C$ if there exists a cycle $C'$ of dimension $\operatorname{dim}(D)+1$, a morphism $f:C'\to C$ and a bounded rational function $h\in R(C')$ such that
\begin{align*}
f_*(h\cdot C')=D.
\end{align*}
We call $D$ and $D'$ \textit{rationally equivalent} (notation: $D \sim D'$) if $D-D'$ is rationally equivalent to zero. The $k$-th \textit{Chow group} of $C$ is defined as
\begin{align*}
A_k(C):=Z_k(C)/\sim,
\end{align*}
where the intersection product $Z_{n-k}(\mathbb{R}^n)\times Z_{n-l}(\mathbb{R}^n)\to Z_{n-k-l}(\mathbb{R}^n)$ induces a well-defined bilinear map (proposition 1.8.10 of \cite{Allermann})
\begin{align*}
A_{n-k}(\mathbb{R}^n)\times A_{n-l}(\mathbb{R}^n) &\to A_{n-k-l}(\mathbb{R}^n)\\
([E],[F]) &\mapsto [E]\cdot [F]:=[E\cdot F].
\end{align*}
\end{definition}

In the following we consider cycles up to rational equivalence. As an example, two arbitrary points in $\mathbb{R}^2$ (viewed as $0$-dimensional cycles, see example \ref{example:pull_back_point}) are rationally equivalent.

\begin{definition}[Degree map]
Let $C$ be a cycle. The map
\begin{align*}
\operatorname{deg}:A_0(C) &\to \mathbb{Z}\\
[\omega_1 P_1+\dots+\omega_r P_r] &\mapsto \sum_{i=1}^r \omega_i
\end{align*}
is a well-defined morphism and for $D\in A_0(C)$ the number $\operatorname{deg}(D)$ is called the \textit{degree of $D$}. 
\end{definition}

\begin{remark}\label{remark:facts_about_rational_equivalence}
The most important facts about rational equivalence that we will use are the following:
\begin{itemize}
\item[(a)]
Pull-backs of rationally equivalent cycles are rationally equivalent.
\item[(b)]
If two $0$-dimensional cycles are rationally equivalent, then their numbers obtained by the degree map are the same.
\item[(c)]
Two cycles in $\mathbb{R}^n$ that only differ by a translation are rationally equivalent.
\end{itemize}
\end{remark}

\subsection*{Tropical moduli spaces}

This subsection collects background on tropical moduli spaces following \cite{GathmannKerberMarkwig}.

\begin{definition}[Moduli space of abstract tropical curves]
An \textit{abstract rational tropical curve} is a metric tree $\Gamma$ with unbounded edges called ends and with $\val(v)\geq 3$ for all vertices $v\in\Gamma$. It is called $n$-\textit{marked tropical curve} $(\Gamma,x_1,\dots,x_n)$ if $\Gamma$ has exactly $n$ ends that are labeled with pairwise different $x_1,\dots,x_n\in\mathbb{N}$. Two $n$-marked tropical curves $(\Gamma,x_1,\dots,x_n)$ and $(\tilde{\Gamma},\tilde{x}_1,\dots,\tilde{x}_n)$ are isomorphic if there is a homeomorphism $\Gamma\to \tilde{\Gamma}$ mapping $x_i$ to $\tilde{x}_i$ for all $i$ and each edge of $\Gamma$ is mapped onto an edge of $\tilde{\Gamma}$ by an affine linear map of slope $\pm 1$. The set $\mathcal{M}_{0,n}$ of all $n$-marked tropical curves up to isomorphism is called \textit{moduli space of $n$-marked tropical curves}. Forgetting all lengths of an $n$-marked tropical curve gives us its \textit{combinatorial type}.
\end{definition}

\begin{remark}[$\mathcal{M}_{0,n}$ is a tropical fan]
We have the \textit{distance map}
\begin{align*}
\operatorname{dist}:\mathcal{M}_{0,n} &\to\mathbb{R}^{n\choose 2}\\
\Gamma &\mapsto (\textrm{length of the path from end $i$ to end $j$ })_{ij}
\end{align*}
and define $v_I$ ($I\subset \{1,\dots ,n\}, |I|\geq 2, |I^C|\geq 2$) to be the image under dist of the $n$-marked tropical curve that has only one bounded edge of length one with markings $I$ on one and markings $I^C$ on the other side. Moreover, the map
\begin{align*}
\phi:\mathbb{R}^n &\to \mathbb{R}^{n\choose 2}\\
a &\mapsto (a_i+a_j)_{ij}
\end{align*}
induces (by abuse of notation) an injective map
\begin{align*}
\operatorname{dist}:\mathcal{M}_{0,n} \to\mathbb{R}^{n\choose 2}/\operatorname{Im}(\phi).
\end{align*}
If we choose
\begin{align*}
\Lambda_n:=\sum_{\substack{I\subset \{1,\dots ,n\}\\ |I|\geq 2}}v_I\mathbb{Z}
\end{align*}
to be the lattice of $\mathbb{R}^{n\choose 2}/\operatorname{Im}(\phi)$, then $\mathcal{M}_{0,n}\subseteq\mathbb{R}^{n\choose 2}/\operatorname{Im}(\phi)$ is a tropical fan of pure dimension $n-3$ with its fan structure given by combinatorial types, and with all weights equal one.
\end{remark}

\begin{figure}
\centering
\def\svgwidth{150pt}
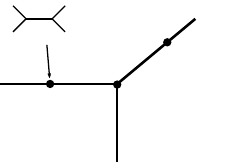
\caption{One way of embedding the moduli space $\mathcal{M}_{0,4}$ into $\mathbb{R}^2$ centered at the origin of $\mathbb{R}^2$. The length of a bounded edge of a tropical curve depicted above is given by the distance of the point in $\mathcal{M}_{0,4}$ parametrizing this curve from the origin of $\mathbb{R}^2$.}
\label{Example_M_0_4}
\end{figure}

\begin{definition}[Degree]\label{definition:degree}
Let $|\Delta|\in\mathbb{N}_{>0}$. A set $\Delta:=\lbrace (v_i,x_i)\rbrace_{i=1,\dots,|\Delta|}$ of tuples is called \textit{degree} if
\begin{itemize}
\item[(1)]
$0\neq v_i\in\mathbb{R}^2$ for all $i=1,\dots,|\Delta|$, and $\langle v_1,\dots,v_{|\Delta|}\rangle=\mathbb{R}^2$, and $\sum_i v_i=0$.
\item[(2)]
$x_i\in\mathbb{N}_{>0}$ for all $i=1,\dots,|\Delta|$, and $x_i\neq x_j$ for all $i\neq j$. An $x_i$ is called \textit{label}.
\end{itemize}
Let $\Sigma$ be a $2$-dimensional lattice polytope in $\mathbb{R}^2$ with facets $E_1,\dots,E_m$ whose lattice lengths are denoted by $|E_1|,\dots,|E_m|$ and let $e_1,\dots,e_m$ be unordered partitions of $E_1,\dots,E_m$, that is $e_i$ is a partition of $E_i$ of some length denoted by $l(e_i)$ for $i=1,\dots,m$. If
\begin{align*}
\lbrace v_i \rbrace_{i=1,\dots,|\Delta|}=\bigcup_{i=1}^{m}\bigcup_{j=1}^{l(e_i)}\lbrace e_{i_j}\cdot\operatorname{pnv}(E_i) \rbrace,
\end{align*}
where $\operatorname{pnv}(E_i)$ is the primitive normal vector of $E_i^\perp$ for $i=1,\dots,m$, then $\Delta$ is said to be associated to a polytope $\Sigma$ with partitions $e_1,\dots,e_m$ and is referred to as $\Delta\left( \Sigma(e_1,\dots,e_m)\right)$.

Important special cases that we use later are the following:
\begin{itemize}
\item
If each entry of each partition $e_i$ is one, then the associated degree is denoted by $\Delta(\Sigma)$.
\item
In case of degree $d$ curves in $\mathbb{P}^2$, the degree $\Delta$ is defined as follows:
Let $\Sigma_d$ be the convex hull of $\lbrace (0,0),(d,0),(d,0)\rbrace\in\mathbb{R}^2$ for some $d\in\mathbb{N}_{>0}$, then $\Delta_d$ is the degree associated to $\Sigma_d$, where the labels are given by: vectors parallel to (and with the same direction as) $(-1,0)\in\mathbb{R}^2$ have labels $1,\dots,d$, vectors parallel to (and with the same direction as) $(0,-1)$ have labels $d+1,\dots,2d$ and vectors parallel to (and with the same direction as) $(1,1)$ have labels $2d+1,\dots,3d$.
\item
In case of degree $(|\alpha|,|\beta|)$ curves of contact orders $\alpha,\beta$ in the first Hirzebruch surface, the degree $\Delta$ is defined as follows:
Let $s\in\mathbb{N}_{>0}$ and $b\in\mathbb{N}$. Let $\alpha=(\alpha_1,\dots)$ be an unordered partition of $b+s$, let $\beta=(\beta_1,\dots)$ be an unordered partition of $b$ and let $\Sigma(\alpha,\beta)$ be the convex hull of $\lbrace (0,0),(s,0),(s,b),(0,b+s)\rbrace\in\mathbb{R}^2$. We associate the degree $\Delta\left(\alpha,\beta\right)$ to the polytope $\Sigma(\alpha,\beta)$, where the partition of the left facet is given by $\alpha$ and the partition of the right facet is given by $\beta$. Moreover, vectors parallel to (and with the same direction as) $(-1,0)\in\mathbb{R}^2$ have labels $1,\dots, l(\alpha)$, vectors parallel to (and with the same direction as) $(1,0)$ have labels $l(\alpha)+1,\dots,l(\alpha)+l(\beta)$.
\end{itemize}

\end{definition}

\begin{definition}[Moduli space of (rational) tropical stable maps to $\mathbb{R}^2$]\label{definition:moduli_stable_maps}
An \textit{$n$-pointed tropical stable map of degree $\Delta$ to $\mathbb{R}^2$} (alternatively: \textit{tropical curve with $n$ points}) is a tuple $(\Gamma,x_1,\dots,x_N,h)$, where $(\Gamma,x_1,\dots,x_N)$ is an $N$-marked tropical curve (with $N=|\Delta|+n$ and $x_{n+1},\dots,x_{N}$ the labels given by $\Delta$) and $h:\Gamma\to\mathbb{R}^2$ such that:
\begin{itemize}
\item[(a)]
Let $e\in\Gamma$ be an edge with length $l(e)\in [0,\infty]$, identify $e$ with $[0,l(e)]$ and denote the vertex of $e$ that is identified with $0\in [0,l(e)]=e$ by $V$. The map $h$ is integer affine linear, i.e. $h\mid_e:t\mapsto tv+a$ with $a\in\mathbb{R}^2$ and $v(e,V):=v\in\mathbb{Z}^2$, where $v(e,V)$ is called \textit{direction vector of $e$ at $V$} and the \textit{weight} of an edge (denoted by $\omega(e)$) is the $\gcd$ of the entries of $v(e,V)$. If $e=x_i\in\Gamma$ is an end, then $v(x_i)$ denotes the direction vector of $x_i$ pointing away from its one vertex it is adjacent to.
\item[(b)]
If $i>n$, then the direction vector $v(x_i)$ of an end labeled with $x_i$ is given by 
$$v(x_i):=v_{i-n},$$ where $v_{i-n}$ is defined by $\Delta$. If $i\leq n$, then the direction vector of the end labeled with $x_i$ is zero. Ends with direction vector zero are called \textit{contracted ends} or \textit{points}.
\item[(c)]
The \textit{balancing condition}
\begin{align*}
\sum_{\substack{e\in\Gamma\textrm{ an edge}, \\ V \textrm{ vertex of }e}}v(e,V)=0
\end{align*}
holds for every vertex $V\in\Gamma$.
\end{itemize}
Two $n$-pointed tropical stable maps of degree $\Delta$, namely $(\Gamma ,x_1,\dots,x_N,h)$ and $(\Gamma' ,x_1', \dots, x_N',h')$, are isomorphic if there is an isomorphism $\varphi$ of their underlying $N$-marked tropical curves such that $h'\circ\varphi=h$.

\noindent The set $\mathcal{M}_{0,n}\left(\mathbb{R}^2,\Delta\right)$ of all $n$-pointed tropical stable maps of degree $\Delta$ up to isomorphism is called \textit{moduli space of $n$-pointed tropical stable maps of degree $\Delta$}.
\end{definition}

\begin{remark}[$\mathcal{M}_{0,n}\left(\mathbb{R}^2,\Delta\right)$ is a fan]\label{remark:identification:stable_maps_abstract_maps}
The map
\begin{align*}
\mathcal{M}_{0,n}\left(\mathbb{R}^2,\Delta\right) &\to \mathcal{M}_{0,N}\times\mathbb{R}^2\\
(\Gamma,x_1,\dots,x_N,h) &\mapsto \left(\left(\Gamma,x_1,\dots,x_N\right),h(x_1)\right)
\end{align*}
with $N=|\Delta|+n$ is bijective and $\mathcal{M}_{0,n}\left(\mathbb{R}^2,\Delta\right)$ is a tropical fan of dimension $|\Delta|-1$, see proposition 4.7 of \cite{GathmannKerberMarkwig}.
\end{remark}

\begin{definition}[Evaluation maps]
For $i=1,\dots,n$, the map
\begin{align*}
\operatorname{ev}_i:\mathcal{M}_{0,n}\left(\mathbb{R}^2,\Delta\right) &\to \mathbb{R}^2\\
(\Gamma,x_1,\dots,x_N,h) &\mapsto h(x_i)
\end{align*}
is called \textit{$i$-th evaluation map}. Under the identification from remark \ref{remark:identification:stable_maps_abstract_maps} the $i$-th evaluation map is a morphism of fans $\operatorname{ev}_i:\mathcal{M}_{0,N}\times\mathbb{R}^2 \to \mathbb{R}^2$, see proposition 4.8 of \cite{GathmannKerberMarkwig}.
\end{definition}

\begin{example}[Pull-back of a point]\label{example:pull_back_point}
A point $p=(p_1,p_2)\in\mathbb{R}^2$ is an intersection product of two rational functions, e.g.\
\begin{align*}
p=\operatorname{max}\{p_1,x\}\cdot\operatorname{max}\{p_2,y\}\cdot\mathbb{R}^2,
\end{align*}
 where $x,y$ are the coordinates in $\mathbb{R}^2$. The pull-back of the point $p$ under $\ev_i$ is defined to be
 $$\ev_i^*(p):= \ev_i^*(\max\{p_1,x\})\cdot \ev_i^*(\max\{p_2,y\})\cdot \mathcal{M}_{0,n}\left(\mathbb{R}^2,\Delta\right).
 $$
\end{example}

\begin{definition}[Forgetful maps]
For $n\geq4$ the map
\begin{align*}
\operatorname{ft}:\mathcal{M}_{0,n}&\to\mathcal{M}_{0,n-1}\\
(\Gamma,x_1,\dots,x_n) &\mapsto (\Gamma',x_1,\dots,x_{n-1})
\end{align*}
where $\Gamma'$ is the stabilization (straighten $2$-valent vertices) of $\Gamma$ after removing its end marked by $x_n$ is called the $n$-th \textit{forgetful map}. Applied recursively, it can be used to forget several ends with markings in $I^C\subset \{x_1,\ldots,x_n\}$, denoted by $\operatorname{ft}_I$, where $I^C$ is the complement of $I\subset \{x_1,\ldots,x_n\}$. With the identification from remark \ref{remark:identification:stable_maps_abstract_maps}, and additionally forgetting the map to the plane, we can also consider 
\begin{align*}
\operatorname{ft}_I:\mathcal{M}_{0,n}\left(\mathbb{R}^2,\Delta\right) &\to\mathcal{M}_{0,|I|}\\
(\Gamma,x_1,\dots,x_n,h) &\mapsto \operatorname{ft}_I(\Gamma,x_i|i\in I).
\end{align*}
\end{definition}
Any forgetful map is a morphism of fans.

\subsection*{Correspondence theorem}
The correspondence theorem of \cite{IlyaCRC} we use states that the number of classical curves satisfying point and cross-ratio conditions and the number of tropical curves satisfying point and tropical cross-ratio conditions are equal. Since different classical curves may tropicalize to the same tropical curve, each tropical curve has to be counted with a \textit{multiplicity}. We recall the definition of these multiplicities. For that we stick to the notation used in \cite{IlyaCRC}, for more details see (4.1) of \cite{IlyaCRC}.

\begin{definition}[Cross-ratios defined by \cite{MikhalkinCRC,IlyaCRC}]\label{definition:cross-ratios_Ilya}
Let $(\Gamma,x_1,\dots,x_N,h)\in \mathcal{M}_{0,n}\left(\mathbb{R}^2,\Delta\right)$. Let $\lbrace \beta_{i_1},\beta_{i_3}\rbrace$ and $\lbrace \beta_{i_2},\beta_{i_4}\rbrace$ be two sets of labels of ends of $\Gamma$ such that $\beta_{i_1},\dots,\beta_{i_4}$ are pairwise different. A bounded edge $\gamma$ of $\Gamma$ \textit{separates} $\beta_{i_1},\beta_{i_2}$ from $\beta_{i_3},\beta_{i_4}$ if $\beta_{i_1},\beta_{i_2}$ belong to one of the two connected components of $\Gamma\backslash \lbrace \gamma\rbrace$ and $\beta_{i_3},\beta_{i_4}$ to another.

The \textit{(tropical) cross-ratio} $\lambda'_i$ of $\lbrace \beta_{i_1},\beta_{i_2}\rbrace$ and $\lbrace \beta_{i_3},\beta_{i_4}\rbrace$ is given by
\begin{align*}
\lambda'_i:=\sum_\gamma \epsilon(\gamma,i)|\gamma|,
\end{align*}
where the sum goes over all bounded edges of $\Gamma$ and $|\gamma|$ is the length of a bounded edge and
\begin{align*}
\epsilon(\gamma,i):=
\begin{cases}
      1, & \text{if $\gamma$ separates the ends $\beta_{i_1},\beta_{i_2}$ from $\beta_{i_3},\beta_{i_4}$,} \\
      -1, & \text{if $\gamma$ separates the ends $\beta_{i_1},\beta_{i_4}$ from $\beta_{i_2},\beta_{i_3}$,} \\
      0, & \text{otherwise.}
\end{cases}
\end{align*}
\end{definition}

\begin{remark}[Cross-ratios and tropicalizations]\label{remark:tropicalized_cross-ratios}
Note that tropical cross-ratios are indeed tropicalizations of classical cross-ratios (see lemma 3.1 of \cite{IlyaCRC}), i.e. given a classical curve that satisfies a classical cross-ratio, then its tropicalization satisfies a tropical cross-ratio which is given by applying the valuation map to the classical cross-ratio. 
\end{remark}

\begin{definition}[Multiplicities]\label{Definition:Ilyas_multiplicities}
Let $C=(\Gamma,x_1,\dots,x_N,h)$ be a tropical curve that satisfies given point conditions $p_1,\dots,p_n$ and tropical cross-ratios $\lambda'_1,\dots,\lambda'_l$.

Let $x_1$ be the end of $\Gamma$ that is contracted to $p_1$ under $h$. We refer to the vertex adjacent to $x_1$ in $\Gamma$ as \textit{root vertex} and orient all edges of $\Gamma$ away from the root vertex. The \textit{head} of a bounded edge $\gamma$ is denoted by $\mathfrak{h}(\gamma)$ and its \textit{tail} by $\mathfrak{t}(\gamma)$. Let $V(\Gamma)$ be the set of vertices of $\Gamma$ and let $E^b(\Gamma)$ be the set of bounded edges of $\Gamma$. We refer to a vertex of $\Gamma$ as $v$ and to a bounded edge of $\Gamma$ as $\gamma$ for now. The vertices adjacent to ends $x_1,\dots,x_N$ are denoted by $v_1,\dots,v_N$ and do not need to be different. Define the complex

\begin{align}\label{eq:complex_Ilya}
\theta:\underbrace{\bigoplus_{v\in V(\Gamma)}\mathbb{Z}^2\oplus\bigoplus_{\gamma\in E^b(\Gamma)} \mathbb{Z}}_{M_1} \overset{B}{\longrightarrow} \underbrace{\bigoplus_{\gamma\in E^b(\Gamma)}\mathbb{Z}^2\oplus \bigoplus_{i=1}^{n}\mathbb{Z}^2 \oplus\bigoplus_{j=1}^{l}\mathbb{Z}}_{M_2} 
\end{align}

\noindent given by the maps (that are defined copywise)

\begin{align*}
1_\gamma\mapsto n_\gamma+\sum_{i=1}^{l}\epsilon(\gamma,i) \textrm{\quad and \quad}
a_v\mapsto \sum_\gamma \tilde{\epsilon}(\gamma,v)a_v+\sum_{i=1}^n\delta(v,v_i)a_v,
\end{align*}
where $a_v$ is the coordinate vector of $h(v)$ and where (see definition \ref{definition:moduli_stable_maps} for the notation of $v(\gamma,\mathfrak{t}(\gamma))$)
\begin{align*}
\tilde{\epsilon}(\gamma,v):=
\begin{cases}
      1, & \text{if}\ v=\mathfrak{t}(\gamma) \\
      -1, & \text{if}\ v=\mathfrak{h}(\gamma) \\
      0, & \text{otherwise}
\end{cases}
\textrm{\quad and \quad}
n_\gamma:=v(\gamma,\mathfrak{t}(\gamma))
\end{align*}
\noindent and
\begin{align*}
\delta(v,v_i):=
\begin{cases}
      1, & \text{if}\ v=v_i \\
      0, & \text{otherwise}.
\end{cases}
\end{align*}
\noindent Let $\theta_\mathbb{Z}$ be the map from above in the complex $\eqref{eq:complex_Ilya}\otimes_\mathbb{Z}\mathbb{Z}$. Finally, we can define the \textit{multiplicity of $C$}
\begin{align*}
m_\mathbb{C}(\Gamma,h):=|\coker\theta_\mathbb{Z}|,
\end{align*}
which is equal to $|\det(B)|$.
\end{definition}

\begin{theorem}[Correspondence Theorem 5.1 of \cite{IlyaCRC}]\label{thm:correspondence_thm_CRC}
Let $\Sigma$ be a $2$-dimensional lattice polytope and let $X_\Sigma$ be its toric variety. Let $q_1,\dots,q_n$ be points in $X_\Sigma$ and let $\mu_1,\dots,\mu_l$ be (classical) cross-ratios. Let these conditions be in general position such that there is only a finite number of rational curves in $X_\Sigma$ that fulfill them. Denote this number by $N^{\textrm{class}}_{0,n}\left( \mu_1,\dots,\mu_l \right)$. Let $p_1,\dots,p_n,\lambda'_1,\dots,\lambda'_l$ be the tropicalizations (see remark \ref{remark:tropicalized_cross-ratios}) of the conditions above. Then
\begin{align*}
N^{\textrm{class}}_{0,n}\left( \mu_1,\dots,\mu_l \right)
=
N_{0,n}\left(\lambda'_1,\dots,\lambda'_l\right)
\end{align*}
holds, where $N_{0,n}\left(\lambda'_1,\dots,\lambda'_l\right)$ is the number of rational tropical curves of degree $\Delta(\Sigma)$ that satisfy the conditions $p_1,\dots,p_n,\lambda'_1,\dots,\lambda'_l$.
\end{theorem}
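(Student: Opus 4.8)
Because this statement is quoted verbatim from \cite{IlyaCRC}, I only sketch the strategy by which such a tropical--classical correspondence is proved, following the pattern of \cite{MikhalkinFundamental} adapted to cross-ratios. Work over the non-Archimedean field $K$ fixed in the preliminaries, let $\beta$ be the curve class on $X_\Sigma$ cut out by $\Delta(\Sigma)$ (so that all boundary contacts are simple), and let $Z$ be the locus in the moduli space $\overline{M}_{0,N}(X_\Sigma,\beta)$ of $N$-pointed rational stable maps obtained by intersecting the evaluation preimages $\ev_i^{-1}(q_i)$ with the cross-ratio preimages $\mathrm{cr}_j^{-1}(\mu_j)$, where $\mathrm{cr}_j$ sends a stable map to the cross-ratio of its four marked source points entering $\mu_j$. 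By lemma 3.1 of \cite{IlyaCRC} (see remark \ref{remark:tropicalized_cross-ratios}) one has $\val(\mu_j)=\lambda'_j$, so $Z$ parametrizes exactly the curves counted by $N^{\mathrm{class}}_{0,n}(\mu_1,\dots,\mu_l)$. The first step is a dimension count: each $\ev_i^{-1}(q_i)$ has codimension $2$ and each $\mathrm{cr}_j^{-1}(\mu_j)$ codimension $1$, and under the general-position hypothesis the relevant obstruction groups vanish, so $Z$ is a finite reduced set of points lying in the smooth locus of $\overline{M}_{0,N}(X_\Sigma,\beta)$; dually the tropical point and cross-ratio conditions cut $\mathcal{M}_{0,n}(\mathbb{R}^2,\Delta)$ down to finitely many tropical stable maps, each rigid modulo the constraints.

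Next I would tropicalize: via the tropicalization map on $\overline{M}_{0,N}(X_\Sigma,\beta)$ (equivalently the retraction onto the skeleton of the Berkovich analytification), every point of $Z$ maps to a tropical stable map of degree $\Delta(\Sigma)$ that satisfies the point conditions $p_i=\trop(q_i)$ and, by the valuation formula above, the tropical cross-ratios $\lambda'_j$. This yields a map $\trop$ from $Z$ onto the set of tropical curves contributing to $N_{0,n}(\lambda'_1,\dots,\lambda'_l)$. The correspondence theorem then reduces to two claims: that this map is well-defined and surjective, and that for each tropical solution $C$ the fibre $\trop^{-1}(C)$ has cardinality exactly the multiplicity $m_\mathbb{C}(C)$ of definition \ref{Definition:Ilyas_multiplicities}. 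Granting both, summing over $C$ gives $|Z|=\sum_C m_\mathbb{C}(C)=N_{0,n}(\lambda'_1,\dots,\lambda'_l)$, which is the asserted equality.

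The heart of the argument, and the step I expect to be hardest, is the fibre count. The plan is a local deformation-theoretic study of $\overline{M}_{0,N}(X_\Sigma,\beta)$ near the stratum of the combinatorial type of $C$: infinitesimal deformations of a stable map tropicalizing to $C$ are encoded by the positions $a_v\in\mathbb{Z}^2$ of the vertices of $C$ and the lengths $1_\gamma$ of its bounded edges, while the direction constraints along the bounded edges, the $n$ point constraints, and the $l$ cross-ratio constraints impose precisely the linear relations given by the matrix $B$ of the complex $\theta$ in \eqref{eq:complex_Ilya}. The cross-ratio rows appear because, after trivializing the source $\mathbb{P}^1$, requiring $\val(\mathrm{cr}_j)=\lambda'_j$ on the nose amounts, to leading order, to pinning the signed length sum $\sum_\gamma\epsilon(\gamma,j)|\gamma|$. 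Under the genericity hypothesis $B$ is invertible over $\mathbb{Q}$, and one identifies the lifts of $C$ with the integral solutions of the corresponding affine system by a Hensel-type argument together with toric transversality, so that $|\trop^{-1}(C)|=|\det B|=|\coker\theta_\mathbb{Z}|=m_\mathbb{C}(C)$. Making this rigorous requires controlling the source cross-ratio in terms of the edge lengths and showing the higher-order corrections do not change the count; handling tropical curves with vertices of valence $>3$, where the fan structure of the moduli space is more delicate but $|\coker\theta_\mathbb{Z}|$ is insensitive to the chosen resolution; and ruling out lifts degenerating into the boundary of $\overline{M}_{0,N}(X_\Sigma,\beta)$ or contributing with excess multiplicity — this last point is exactly where the general position of the $q_i$ and $\mu_j$, and the smoothness of $Z$ it guarantees, are used.
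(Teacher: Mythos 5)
This theorem is imported verbatim from \cite{IlyaCRC} (it is Theorem 5.1 there) and the paper gives no proof of it, so there is nothing internal to compare your argument against. Your sketch is a faithful outline of the strategy of the cited source: tropicalize the finite solution set, prove surjectivity onto the tropical solutions, and identify each fibre cardinality with $|\coker\theta_\mathbb{Z}|=|\det B|$ via a deformation-theoretic/lifting argument, which is exactly consistent with the multiplicity $m_\mathbb{C}(\Gamma,h)$ the paper quotes in definition \ref{Definition:Ilyas_multiplicities}. Since the paper itself treats this as a black box, your (correctly labelled) sketch is an acceptable stand-in, with the caveat that the hard analytic steps you flag — the Hensel-type lifting and the excess-multiplicity exclusion — are precisely what \cite{IlyaCRC} supplies and what a self-contained proof would still owe.
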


\begin{example}\label{example:Kontsevich_CR}
When going through the (tropical) proof of Kontsevich's formula \cite{KontsevichPaper}, we can see that it allows us to determine the number of unlabeled tropical curves of degree $\Delta_d$ satisfying point conditions and exactly one cross-ratio constraint which involves exactly two points. In this case \textit{unlabeled} means that non-contracted edges not involved in any cross-ratio condition are not equipped with a label.

In case of $d=3$, Kontsevich's formula yields $40$ unlabeled curves (counted with multiplicity). Moreover, the proof of Kontsevich's formula allows us to actually draw these tropical curves. Hence we can determine the number of labeled curves by putting labels on ends, which yields $1440$ labeled curves.
\end{example}

\section{Tropical cross-ratios}\label{section:tropical_cross-ratios}

In this section we introduce tropical cross-ratios and their degenerations from an intersection theoretic point of view. Given a tropical curve that satisfies degenerated cross-ratios, we express its multiplicity locally.

\begin{definition}[Cross-ratios]\label{Definition:tropical_cross-ratios}
A \textit{(tropical) cross-ratio} $\lambda'$ is an unordered pair of pairs of unordered numbers $\left(\beta_1\beta_2|\beta_3\beta_4\right)$ together with an element in $\mathbb{R}_{>0}$ denoted by $|\lambda'|$, where $\beta_1,\dots,\beta_4$ are pairwise distinct ends of a tropical curve of $\mathcal{M}_{0,n}\left(\mathbb{R}^2,\Delta \right)$. We say that $C\in\mathcal{M}_{0,n}\left(\mathbb{R}^2,\Delta \right)$ satisfies the cross-ratio constraint $\lambda'$ if $C\in\ft^*_{\lambda'}\left(|\lambda'| \right)\cdot \mathcal{M}_{0,n}\left(\mathbb{R}^2,\Delta \right)$, where $|\lambda'|$ is the canonical local coordinate of the ray $\left(\beta_1\beta_2|\beta_3\beta_4\right)$ in $\mathcal{M}_{0,4}$.
\end{definition}

\begin{remark}
The definition of tropical cross-ratios given above generalizes the one given by Mikhalkin and Tyomkin since we can find a suitable projektion $\pi:\mathcal{M}_{0,4}\to\mathbb{R}$ shrinking on ray to zero, sending another one to $\mathbb{R}_{>0}$ and the last one to $\mathbb{R}_{<0}$ such that $\pi\circ\ft_{\lambda'}$ coincides with definition \ref{definition:cross-ratios_Ilya}. In particular, theorem \ref{thm:correspondence_thm_CRC} holds for our notion of tropical cross-ratios.
\end{remark}

\begin{definition}[General position I]\label{definition:general_position_I}
Let $p_1,\dots,p_n$ be points in $\mathbb{R}^2$ and $\lambda'_1,\dots,\lambda'_l$ be cross-ratios that have pairwise distinct pairs of unordered numbers. These conditions are in \textit{general position} if $\prod_{j=1}^{l}\ft_{\lambda'_j}^*\left( |\lambda'_j|\right)\cdot\prod_{i=1}^n\ev_i^*\left( p_i\right)\cdot\mathcal{M}_{0,n}\left(\mathbb{R}^2,\Delta \right)$ is a nonempty finite set that is contained in the union of the interiors of top-dimensional polyhedra of $\mathcal{M}_{0,n}\left(\mathbb{R}^2,\Delta \right)$ and $n+l=|\Delta|-1$. We say that $p_1,\dots,p_{n'},\lambda_1,\dots,\lambda_{l'}$ with $n'+l'<|\Delta|-1$ are in general position if there are $p_{n'+1},\dots,p_n,\lambda'_{l'+1},\dots,\lambda'_l$ such that $n+l=|\Delta|-1$ and $p_1,\dots,p_n,\lambda'_1,\dots,\lambda'_l$ are in general position. If $p_1,\dots,p_n,\lambda'_1,\dots,\lambda'_l$ with $n+l=|\Delta|-1$ are in general position, we define
\begin{align}\label{eq:general_pos_1}
N_{0,n}\left(\lambda'_1,\dots,\lambda'_l\right):=\degree\left(\prod_{j=1}^{l}\ft_{\lambda'_j}^*\left( |\lambda'_j|\right)\cdot\prod_{i=1}^n\ev_i^*\left( p_i\right)\cdot\mathcal{M}_{0,n}\left(\mathbb{R}^2,\Delta \right)\right),
\end{align}
the {\it number of rational tropical curves of degree $\Delta$ satisfying the point conditions $p_i$ and the cross-ratio conditions $\lambda'_i$}. Denote by $\mathcal{C}_{0,n}\left( \lambda'_1,\dots,\lambda'_l\right)$ the set of tropical curves contributing to $N_{0,n}\left(\lambda'_1,\dots,\lambda'_l\right)$.
\end{definition}

\begin{remark}\label{Remark:N_(0,n)independent of positions/lengths}
The numbers $N_{0,n}\left(\lambda'_1,\dots,\lambda'_l\right)$ are independent of the exact positions of the points since two sets of $n$ points are rationally equivalent and so their pull-backs are rationally equivalent leading to the same degree (see remark \ref{remark:facts_about_rational_equivalence}). Notice also that all points in $\mathcal{M}_{0,4}$ are rationally equivalent using remark \ref{remark:facts_about_rational_equivalence} since $\mathcal{M}_{0,4}$ can be embedded (cf. Figure \ref{Example_M_0_4}) by a morphism into $\mathbb{R}^2$ and all points of $\mathbb{R}^2$ are rationally equivalent. Hence the numbers $N_{0,n}\left(\lambda'_1,\dots,\lambda'_l\right)$ are independent of the lengths $|\lambda'_i|$ of the cross-ratios. In particular, the lengths can be zero. This observation is crucial and is used extensively later.
\end{remark}

Note that the intersection theoretic definition of tropical cross-ratios automatically assigns a multiplicity to each tropical curve satisfying given point conditions and cross-ratio constraints. In our case, lemma 1.2.9 of \cite{Rau} states that the intersection theoretic multiplicity of a tropical curve $C$ is the absolute value of the determinant of the so called $\ev$-$\ft$-matrix which is given by the locally (around $C$) linear maps $\ev:\mathcal{M}_{0,n}\left(\mathbb{R}^2,\Delta \right)\to\mathbb{R}^{2n}$ and $\ft:\mathcal{M}_{0,n}\left(\mathbb{R}^2,\Delta \right)\to\mathcal{M}_{0,4}$, where the coordinates on $\mathcal{M}_{0,n}\left(\mathbb{R}^2,\Delta \right)$ and $\mathcal{M}_{0,4}$ are the bounded edges' lengths.

Often, tropical intersection theory yields multiplicities needed for correspondence theorems, which enables us to count tropical curves by means of tropical intersection theory on tropical moduli spaces. The same holds true for the counts of curves satisfying cross-ratio conditions we consider here. We prove this in the following proposition, using methods well-known to the experts in the area.

\begin{proposition}
Let $C$ be a tropical curve contributing to \eqref{eq:general_pos_1}. The intersection theoretic multiplicity of $C$ coincides with $m_\mathbb{C}(\Gamma,h)$ defined in \ref{Definition:Ilyas_multiplicities}.
\end{proposition}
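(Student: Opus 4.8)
The plan is to compare the two numbers directly as absolute values of determinants of integer matrices and to show they agree. By the remarks preceding the statement, the intersection-theoretic multiplicity of $C$ is $|\det M|$, where $M$ is the $\ev$-$\ft$-matrix: the matrix of the linearization of $(\ev_1,\dots,\ev_n,\ft_{\lambda'_1},\dots,\ft_{\lambda'_l})$ at $C$ in the local coordinates on $\mathcal{M}_{0,n}\left(\mathbb{R}^2,\Delta\right)$ given by the lengths of the bounded edges of $\Gamma$ together with the position $h(x_1)$; and $m_{\mathbb{C}}(\Gamma,h)=|\det B|$, where $B$ is the matrix of the complex \eqref{eq:complex_Ilya}. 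Since $C$ lies in the interior of a top-dimensional polyhedron, $\Gamma$ is $3$-valent, so it has $|\Delta|+n-2$ vertices and $|\Delta|+n-3$ bounded edges; using this and $n+l=|\Delta|-1$, both $B$ and $M$ are square. The strategy is to realize $M$, up to reordering rows and columns, as a Schur complement of $B$ taken along a \emph{unimodular} block, so that $|\det B|=|\det M|$.

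First I would put $B$ in block form. Take the root vertex to be $v_1$ (the vertex adjacent to the contracted end $x_1$, so that $h(x_1)=h(v_1)$, matching the coordinate used for $M$), order the columns of $B$ as $[\text{non-root vertices }\mathbf{V}' \mid \text{root vertex }v_1 \mid \text{bounded edges }\mathbf{T}]$ and the rows as $[\text{edge equations }\mathbf{E}\mid \text{point conditions }\mathbf{P}\mid \text{cross-ratio conditions }\mathbf{\Lambda}]$. Reading off the maps defining $\theta$, the $\mathbf{P}$-rows meet only the vertex columns (via $\delta(v,v_i)$) and the $\mathbf{\Lambda}$-rows meet only the edge columns (via the $\epsilon(\gamma,j)$), so
\[
B=\begin{pmatrix} \mathbf{E}_{V'} & \mathbf{E}_{v_1} & \mathbf{E}_T\\ \mathbf{P}_{V'} & \mathbf{P}_{v_1} & 0\\ 0 & 0 & \mathbf{\Lambda}_T\end{pmatrix}.
\]
Orienting $\Gamma$ away from $v_1$ gives a bijection between the non-root vertices and the bounded edges, sending each $v\neq v_1$ to its parent edge $\gamma_v$ (with $\mathfrak{h}(\gamma_v)=v$). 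Since $\tilde\epsilon(\gamma_v,v)=-1$ and the only other possibly nonzero entry in the $\gamma_v$-row lies in the column of the (strictly shallower) parent of $v$, ordering vertices and edges by depth makes $\mathbf{E}_{V'}$ block lower triangular with diagonal blocks $-\mathrm{Id}$. Hence $\mathbf{E}_{V'}$ is invertible over $\mathbb{Z}$ with $|\det\mathbf{E}_{V'}|=1$, and $|\det B|$ equals the absolute value of the determinant of the Schur complement $S$ of that block.

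Finally I would identify $S$ with $M$. Solving the homogeneous edge equations $\mathbf{E}_{V'}$ for the non-root vertex coordinates is exactly the rule $h(v)=h(v_1)+\sum_\gamma \pm\, l(\gamma)\,v(\gamma,\cdot)$, the sum over the edges on the path from $v_1$ to $v$; substituting this into the point-condition rows turns them into the linearization of $(\ev_1,\dots,\ev_n)=(h(v_1),\dots,h(v_n))$ in the coordinates $(h(v_1),(l(\gamma))_\gamma)$, i.e.\ the $\ev$-block of $M$, while the cross-ratio rows are untouched (they never involved vertex positions) and read $\ft_{\lambda'_j}(C)=\lambda'_j=\sum_\gamma\epsilon(\gamma,j)\,l(\gamma)$, the $\ft$-block of $M$. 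Thus $S=M$ after matching the orderings, so $m_{\mathbb{C}}(\Gamma,h)=|\det B|=|\det M|$ is the intersection-theoretic multiplicity of $C$. The one genuinely non-formal point I expect is the unimodularity of $\mathbf{E}_{V'}$ — this is where the tree structure of $\Gamma$ and the choice of root enter — together with the bookkeeping needed to match the Schur complement with the $\ev$-$\ft$-matrix; signs cause no trouble since only absolute values of determinants occur, and any unimodular change between the chart built from $v_1$ and any other chosen chart is harmless for the same reason.
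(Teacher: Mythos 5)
Your proposal is correct, and it establishes the same identity $|\det B|=|\det M|$ that the paper proves, but by a different mechanism. The paper introduces a second complex $N_1\xrightarrow{A}N_2$ (with $A$ the $\ev$-$\ft$-matrix), maps it into the complex \eqref{eq:complex_Ilya} via injections $\alpha_1,\alpha_2$ --- where $\alpha_1:(a,\underline{e})\mapsto(a,a+\sum\pm e_iu_{e_i},\underline{e})$ is precisely your ``solve the edge equations for the non-root vertices'' substitution --- and then runs the snake lemma to get $\coker A\cong\coker B$, hence equality of the determinants' absolute values; the key input there is that the induced map on cokernels of $\alpha_1,\alpha_2$ is a surjection of free $\mathbb{Z}$-modules of equal rank, which the paper only asserts can be ``seen from the definitions.'' Your route replaces the homological packaging by elementary linear algebra: you isolate the block $\mathbf{E}_{V'}$ (which is essentially that induced cokernel map), prove its unimodularity explicitly via the parent-edge bijection and depth ordering on the tree, and then take the Schur complement, identifying it with the $\ev$-$\ft$-matrix in the chart $(h(v_1),(\ell_\gamma)_\gamma)$. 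So your argument is more self-contained and actually supplies a proof of the step the paper glosses over; what the paper's formulation buys is a coordinate-free statement at the level of cokernels, which matches Tyomkin's definition $m_\mathbb{C}(\Gamma,h)=|\coker\theta_\mathbb{Z}|$ directly without choosing orderings. Your remark that the per-row sign discrepancy between $\sum_\gamma\epsilon(\gamma,j)\ell_\gamma$ and the local linearization of $\ft_{\lambda'_j}$ is harmless is also justified, since for a fixed combinatorial type only one of the two splittings $(\beta_{i_1}\beta_{i_2}|\beta_{i_3}\beta_{i_4})$, $(\beta_{i_1}\beta_{i_4}|\beta_{i_2}\beta_{i_3})$ can be realized by edges of the tree, so the rows agree up to a single overall sign.
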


\begin{proof}
Let $C=(\Gamma,x_1,\dots,x_N,h)$ be a tropical curve that contributes to $N_{0,n}\left(\lambda_1,\dots,\lambda_l\right)$.
In terms of tropical intersection theory the multiplicity of $C$ is given by $|\det(A)|$, where $A$ is the $\ev$-$\ft$-matrix that is given by the (around $C$) linear maps $\ev,\ft$ and the lengths of the edges as coordinates on the moduli space. We want to sketch how to prove that $|\det(A)|$ and $|\det(B)|$ (from definition \ref{Definition:Ilyas_multiplicities}) are equal. For that, we start with the following complex

\begin{align*}
\underbrace{\mathbb{Z}^2\oplus\bigoplus_{\gamma\in E^b(\Gamma)} \mathbb{Z}}_{N_1} \overset{A}{\longrightarrow} \underbrace{\bigoplus_{i=1}^n \mathbb{Z}^2\oplus \bigoplus_{j=1}^l \mathbb{Z}}_{N_2},
\end{align*}
where the first summand on the left belongs to the root vertex defined in \ref{Definition:Ilyas_multiplicities}. There are maps between the complex above and the complex \eqref{eq:complex_Ilya} in the following way: Let $\alpha_2:N_2\to M_2$ be the canonical embedding and let 
\begin{align*}
\alpha_1:N_1\to M_1,\ (a,\underline{e})\mapsto (a,a+\sum\pm e_i u_{e_i},\underline{e})
\end{align*}
be a map where $a$ is the coordinate of the root vertex, $e_i$ is the length of the edge $\gamma_i$ and $u_{e_i}$ is the primitive direction vector of $\gamma_i$. Moreover, we choose $a+\sum\pm e_i u_{e_i}$ in such a way that it is the shortest path between the root vertex and the vertex associated to the $j$-th contracted end depending on which entry of the vector in the image we are considering (the choice of $\pm$ should be consistent with the orientation on $\Gamma$). Note that $\alpha_1,\alpha_2$ are both injective and that the diagram given by the maps $A,B,\alpha_1,\alpha_2$ commutes. This commutative diagram extends to the commutative diagram shown below. By definition
\begin{align*}
\coker \alpha_1 \cong \left(\mathbb{Z}^2\right)^{|V(\Gamma)|-1} \textrm{\quad and \quad} \coker \alpha_2 =\left(\mathbb{Z}^2\right)^{|E^b(\Gamma)|}.
\end{align*}
Considering the definitions of $B,\zeta_2$, we can see that $\zeta_2\circ B$ is surjective. Hence $C$ is surjective. Since $C$ is a surjective morphism of free module of the same rank it is an isomorphism. Therefore $\coker \alpha_3$ vanishes which guarantees that $\alpha_3$ is surjective. The map $\partial$ which we obtain from applying the snake lemma yields that $G$ vanishes. Therefore $\alpha_3$ is an isomorphism. Thus
\begin{align*}
|\det(A)| = |\det(B)|
\end{align*}
follows.

\begin{table}[H]
\begin{tikzpicture}
\matrix[matrix of math nodes,column sep={60pt,between origins},row
sep={60pt,between origins}, nodes={asymmetrical rectangle}] (s)
{
&|[name=ka]| 0 &|[name=kb]| 0 &|[name=kc]| G \\
|[name=03]|0&|[name=A]| N_1 &|[name=B]| N_2 &|[name=C]| \coker A &|[name=01]| 0 \\
|[name=02]| 0 &|[name=A']| M_1 &|[name=B']| M_2 &|[name=C']| \coker B &|[name=04]| 0 \\
&|[name=ca]| \coker \alpha_1 &|[name=cb]| \coker \alpha_2 &|[name=cc]| \coker \alpha_3 \\
};
\draw[->] (ka) edge (A)
          (kb) edge (B)
          (03) edge (A)
          (C') edge (04)
          (kc) edge (C)
          (A) edge node[auto] {\(A\)} (B)
          (B) edge (C)
          (C) edge (01)
          (A) edge node[auto] {\(\alpha_1\)} (A')
          (B) edge node[auto] {\(\alpha_2\)} (B')
          (C) edge node[auto] {\(\alpha_3\)} (C')
          (02) edge (A')
          (A') edge node[auto] {\(B\)} (B')
          (B') edge (C')
          (A') edge node[auto] {\(\zeta_1\)} (ca)
          (B') edge node[auto] {\(\zeta_2\)} (cb)
          (C') edge (cc)
		  (ka) edge (kb)
          (kb) edge (kc)
          (ca) edge node[auto] {\(C\)} (cb)
          (cb) edge (cc)
 (kc) -| node[auto,text=black,pos=.7]
{\(\partial\)} ($(01.east)+(.5,0)$) |- ($(B)!.35!(B')$) -|
($(02.west)+(-.5,0)$) |- (ca);
\end{tikzpicture}
\end{table}
\end{proof}

The strength of our intersection theoretic definition of tropical cross-ratios is that it allows us to \textit{degenerate} tropical cross-ratios easily. For that note that from an intersection theoretic point of view it does not matter if we pull-back $0\in\mathcal{M}_{0,4}$ instead of a nonzero point.

\begin{definition}[Cross-ratios with $|\lambda|=0$]
A \textit{(tropical) cross-ratio $\lambda$ with $|\lambda|=0$} is defined as a set $\lbrace \beta_1,\dots,\beta_4\rbrace$, where $\beta_1,\dots,\beta_4$ are pairwise distinct ends of a tropical curve $\mathcal{M}_{0,n}\left(\mathbb{R}^2,\Delta \right)$. We say that $C\in\mathcal{M}_{0,n}\left(\mathbb{R}^2,\Delta \right)$ satisfies the cross-ratio constraint $\lambda$ (with $|\lambda|=0$) if $C\in\ft^*_\lambda\left(0 \right)\cdot \mathcal{M}_{0,n}\left(\mathbb{R}^2,\Delta \right)$.

Another way to think about a cross-ratio $\lambda$ with $|\lambda|=0$ is that $\lambda$ is the \textit{degeneration} of cross-ratios $\lambda'_j, j\in\mathbb{N}$ which have the same pairs of unordered numbers and $|\lambda'_j|\to 0$ for $j\to \infty$, where the pairs become a set in the limit. Because of remark \ref{Remark:N_(0,n)independent of positions/lengths} it makes sense to refer to $\lambda$ as the degeneration of $\lambda'_j$ for some $j\in\mathbb{N}$.
\end{definition}

\begin{definition}[General position II]\label{definition:general_position_II}
Let $\lambda_1,\dots,\lambda_{l'}$ be cross-ratios with $|\lambda_j|=0$ for $j=1,\dots,l'$ (i.e. \textit{degenerated} cross-ratios). These cross-ratios are in \textit{general position} if there are general positioned cross-ratios $\lambda'_1,\dots,\lambda'_{l'}$ such that $\lambda_j$ is the degeneration of $\lambda'_j$ for $j=1,\dots,l'$. More precisely, points $p_1,\dots,p_n$ in $\mathbb{R}^2$, cross-ratios $\lambda_1,\dots,\lambda_{l'},\lambda_{l'+1},\dots,\lambda_l$ with $|\lambda_j|=0$ for $j=1,\dots,l'$ and $|\lambda_j|>0$ otherwise are in general position if $p_1,\dots,p_n,\lambda'_1,\dots,\lambda'_{l'},\lambda_{l'+1},\dots,\lambda_l$ are in general position where $\lambda_j$ is the degeneration of $\lambda'_j$ for $j=1,\dots,l'$.
\end{definition}

\begin{notation}\label{notation}
We want to fix the following conventions. If we mention a set of conditions, then we assume that these conditions are in general position and that the cross-ratio constraints are totally ordered by their lengths, i.e. $|\lambda_1|>|\lambda_2|>\dots$. Point conditions are always denoted by $p_1,\dots,p_n$. Cross-ratios are denoted by $\lambda_i'$, where we have $l'$ of these cross-ratios if the intersection defined by the conditions $p_1,\dots,p_n,\lambda_1',\dots,\lambda_{l'}'$ is not a $0$-dimensional cycle, and we have $l$ cross-ratios if the intersection defined by the conditions $p_1,\dots,p_n,\lambda_1',\dots,\lambda_{l}'$ is $0$-dimensional. If we write $\lambda_i$, then $\lambda_i$ is the degeneration of $\lambda_i'$. It may also happen that we need classical (i.e. non-tropical) cross-ratios. A classical cross-ratio is denoted by $\mu_i$ and its tropical counterpart obtained from applying the valuation map of the ground field is denoted by $\lambda_i'$.
\end{notation}

Our next aim is to describe the multiplicity of a curve that satisfies point conditions and degenerated cross-ratio conditions. For that we observe that degenerating a cross-ratio means to shrink an edge, i.e. degenerating the tropical curve satisfying it as well. Therefore the multiplicity of such a degenerated tropical curve $C$ can be described in terms of the number of tropical curves degenerating to $C$.

\begin{definition}[Resolving vertices w.r.t. a cross-ratio with $|\lambda|=0$]\label{definition:lambda_v_vertex_trop_curve}
The combinatorial type of a polyhedron $\tau\subset \mathcal{M}_{0,n}\left(\mathbb{R}^2,\Delta \right)$ (resp.\ $\mathcal{M}_{0,m}$) is denoted by $\mathfrak{c}(\tau)$. Let $\lambda_1,\dots,\lambda_{l'}$ be degenerated cross-ratios and let $\tau\subset \mathcal{M}_{0,n}\left(\mathbb{R}^2,\Delta \right)$ be some polyhedron. The set $\lambda_v$ of cross-ratios associated to a vertex $v$ of $\mathfrak{c}(\tau)$ consists of the cross-ratios $\lambda_j$ such that the image of $v$ under $\ft_{\lambda_j}$ is $4$-valent. If $$\val(v)=3+\#\lambda_v$$ holds, then we say that $v$ is \textit{resolved according to $\lambda'_i$} (we use notation \ref{notation}) if we replace $v$ by two vertices $v_1,v_2$ that are connected by a new edge such that
\begin{align*}
\lambda_v=\lbrace\lambda_i\rbrace\cup\lambda_{v_1}\cup\lambda_{v_2}
\end{align*}
is a union of pairwise disjoint sets and
\begin{align*}
\val(v_k)=3+\#\lambda_{v_k}
\end{align*}
holds for $k=1,2$.
\end{definition}

\begin{example}
In this example we want to point out that resolving a vertex according to a cross-ratio is not unique. It is neither unique in the sense (A) that the edges adjacent to $v_1,v_2$ are uniquely determined nor in the (weaker) sense (B) that the $\lambda_{v_i}$ are uniquely determined.

Let $\tau$ be the $0$-dimensional cell of $\mathcal{M}_{0,6}$, that is $\mathfrak{c}(\tau)$ has only one vertex $v$ to which all ends are adjacent to. We choose the following cross-ratios:
\begin{align*}
\lambda_1=\lbrace 1,2,3,4 \rbrace,\quad &\lambda'_1=(12|34)\\
\lambda_2=\lbrace 3,4,5,6 \rbrace,\quad &\lambda'_2=(34|56)\\
\lambda_3=\lbrace 1,2,5,6 \rbrace,\quad &\lambda'_3=(12|56)\\
\end{align*}
\begin{itemize}
\item[(A)]
If we resolve $v$ according to $\lambda'_3$, we have at least two choices shown in the Figure below.
\begin{figure}[H]
\centering
\def\svgwidth{240pt}
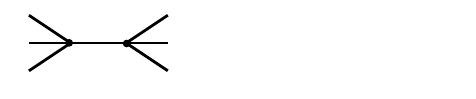
\label{Example_resolving_not_unique}
\end{figure}
\item[(B)]
If we choose another $\lambda'_3$, namely $\lambda'_3=(15|26)$, we also have at least two choices shown in the Figure below.
\begin{figure}[H]
\centering
\def\svgwidth{240pt}
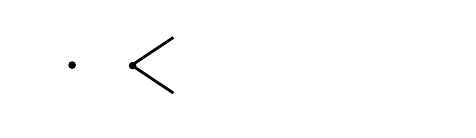
\label{Example_resolving_not_unique_2}
\end{figure}
\end{itemize}
\end{example}

\begin{lemma}\label{lemma:description_of_X}
For notation, see \ref{notation}. The intersection product $X:=\prod_{j=1}^{l'}\ft_{\lambda_j}^*\left( 0\right)\cdot\mathcal{M}_{0,n}\left(\mathbb{R}^2,\Delta \right)$ lies in $\mathcal{M}_{0,n}\left(\mathbb{R}^2,\Delta \right)^{(l')}$ and its top-dimensional polyhedra are top-dimensional polyhedra $\tau$ of $\mathcal{M}_{0,n}\left(\mathbb{R}^2,\Delta \right)^{(l')}$ such that for all vertices $v$ of $\mathfrak{c}(\tau)$
\begin{align*}
\val(v)=3+\#\lambda_v
\end{align*}  
holds and the weight of a top-dimensional polyhedron $\tau$ of $X$ is given recursively by
\begin{align*}
\omega(\tau)=
\begin{cases}
1 & \textrm{, if $l'=1$}\\
\sum_\sigma \omega(\sigma) & \textrm{, otherwise}
\end{cases}
\end{align*}
where the sum runs over all top-dimensional polyhedra of $\prod_{j=2}^{l'}\ft_{\lambda_j}^*\left( 0\right)\cdot\mathcal{M}_{0,n}\left(\mathbb{R}^2,\Delta \right)$ such that $\mathfrak{c}(\sigma)$ is given by resolving the vertex $v\in\mathfrak{c}(\tau)$, that is defined by $\lambda_1\in\lambda_v$, according to $\lambda'_1$. In particular, all weights of $X$ are non-negative.
\end{lemma}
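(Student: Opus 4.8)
The plan is to induct on $l'$, computing the intersection product $X$ at each step directly from the divisor formula of Definition \ref{definition:Assoc_Weil_Div}. The one nontrivial input is Example \ref{example:pull_back_0_M_0,4}, which tells us that the vertex $0\in\mathcal{M}_{0,4}$ is cut out with weight one by a piecewise linear function; since $\mathcal{M}_{0,4}$ is a tripod with rays $R_{(\beta_1\beta_2\mid\beta_3\beta_4)}$, $R_{(\beta_1\beta_3\mid\beta_2\beta_4)}$, $R_{(\beta_1\beta_4\mid\beta_2\beta_3)}$, I would, for the cross-ratio $\lambda'_1=(\beta_1\beta_2\mid\beta_3\beta_4)$, pick a representative $\varphi_1$ of this divisor that is identically zero on the last two rays and equals the primitive integral coordinate on $R_{(\beta_1\beta_2\mid\beta_3\beta_4)}$ (two such representatives differ by a globally linear function, hence have the same divisor). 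Then $\ft_{\lambda_1}^*(0)\cdot(-)=(\varphi_1\circ\ft_{\lambda_1})\cdot(-)$ throughout, and I would use the standard local picture of forgetful maps near codimension-one cells from \cite{GathmannKerberMarkwig, KontsevichPaper}: resolving the relevant vertex in its three ways corresponds to passing into the three rays of $\mathcal{M}_{0,4}$, the associated primitive direction vectors being sent to the primitive ray generators.

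\textbf{Base case $l'=1$.} I would argue directly. A codimension-one cell $\tau$ of $\mathcal{M}_{0,n}(\mathbb{R}^2,\Delta)$ has exactly one vertex $v$ of valence $>3$, which is $4$-valent; all other vertices are trivalent and so automatically satisfy $\val=3=3+\#\lambda_\bullet$, since $\ft_{\lambda_1}$ cannot be $4$-valent at a vertex of valence $\le 3$. Since $\varphi_1\circ\ft_{\lambda_1}$ is piecewise linear with break locus $\ft_{\lambda_1}^{-1}(0)$, only cells with $\ft_{\lambda_1}(\tau)=\{0\}$ can carry nonzero weight, and I would check that this happens precisely when $\beta_1,\dots,\beta_4$ lie in four distinct branches at $v$, equivalently when $\ft_{\lambda_1}(v)$ is $4$-valent, equivalently $\lambda_1\in\lambda_v$, equivalently $\val(v)=3+\#\lambda_v$. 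For such $\tau$ the function $\varphi_1\circ\ft_{\lambda_1}$ vanishes on $\tau$, the three top-dimensional cells through $\tau$ are the three resolutions of $v$ mapping to the three rays, and the divisor formula yields $\omega_X(\tau)=\varphi_1(\text{primitive generator})+0+0=1$; cells mapped into the relative interior of a ray carry weight zero because there $\varphi_1\circ\ft_{\lambda_1}$ is locally linear. This is exactly the claimed description for $l'=1$.

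\textbf{Inductive step.} Writing $X':=\prod_{j=2}^{l'}\ft_{\lambda_j}^*(0)\cdot\mathcal{M}_{0,n}(\mathbb{R}^2,\Delta)$, the induction hypothesis gives that $X'$ is a balanced weighted fan in $\mathcal{M}_{0,n}(\mathbb{R}^2,\Delta)^{(l'-1)}$ with the asserted support and nonnegative, recursively-described weights. By commutativity of the tropical intersection product $X=(\varphi_1\circ\ft_{\lambda_1})\cdot X'$, so $X$ lies in $\mathcal{M}_{0,n}(\mathbb{R}^2,\Delta)^{(l')}$; I would then apply the divisor formula of Definition \ref{definition:Assoc_Weil_Div} to the cycle $X'$ itself. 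A codimension-one cell $\tau$ of $X'$ has all vertices of the expected valence except one carrying an extra contraction; the base-case reasoning again shows $\omega_X(\tau)\neq0$ forces $\ft_{\lambda_1}(\tau)=\{0\}$, and a branch argument at the extra vertex (resolving a vertex with at most one of $\beta_1,\dots,\beta_4$ on its far side leaves $\ft_{\lambda_1}$ at $\{0\}$) forces that vertex to be the vertex $v$ with $\lambda_1\in\lambda_v$, whence every vertex of $\mathfrak{c}(\tau)$ satisfies $\val=3+\#\lambda_\bullet$ for the full list $\lambda_1,\dots,\lambda_{l'}$ — matching the claimed support. For such $\tau$, every top-dimensional cell of $X'$ through $\tau$ arises by resolving some $\ge4$-valent vertex, and by the same branch argument $\varphi_1\circ\ft_{\lambda_1}$ contributes a nonzero term only for the resolutions of $v$ according to $\lambda'_1$ (whose new edge separates $\beta_1,\beta_2$ from $\beta_3,\beta_4$, so whose image lands on $R_{(\beta_1\beta_2\mid\beta_3\beta_4)}$), all other cells mapping into a ray where $\varphi_1\equiv0$ or keeping $\ft_{\lambda_1}$ constantly $0$. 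Hence the divisor formula reduces to $\omega_X(\tau)=\sum_\sigma\omega_{X'}(\sigma)$, the sum over the resolutions of $v$ according to $\lambda'_1$ in the sense of Definition \ref{definition:lambda_v_vertex_trop_curve}, which is the asserted recursion; nonnegativity is inherited from $\omega_{X'}$.

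\textbf{Expected main obstacle.} I expect the real difficulty to be the combinatorics of the partial forgetful maps on the degenerated space $X'$: checking that the extra-contracted vertex of a contributing cell must be the $\lambda_1$-vertex; that the cells of $X'$ through $\tau$ with image in the interior of $R_{(\beta_1\beta_2\mid\beta_3\beta_4)}$ are exactly the resolutions of $v$ according to $\lambda'_1$; and — for the equality of supports, i.e.\ that every codimension-$l'$ cell satisfying $\val(v)=3+\#\lambda_v$ at all vertices really occurs in $|X|$ — that at least one valid resolution of $v$ according to $\lambda'_1$ is a cell of $X'$, so the sum defining $\omega_X(\tau)$ is nonempty, hence positive. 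This last point is where I expect the general position hypothesis on the cross-ratios (Notation \ref{notation}) to be essential, to exclude "entangled" configurations of several cross-ratios meeting at $v$. A more technical point is to confirm that no spurious lattice indices enter the weights — that $\ft_{\lambda_1}$ together with the lattice of $X'$ sends the relevant primitive direction vectors $v_{\sigma/\tau}$ to primitive generators of the rays of $\mathcal{M}_{0,4}$ — which is again where the explicit local structure of forgetful maps from \cite{GathmannKerberMarkwig, KontsevichPaper} is used.
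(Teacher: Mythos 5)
Your proposal is correct and follows essentially the same route as the paper: induction on the number of degenerated cross-ratios, reduction to the divisor weight formula of Definition \ref{definition:Assoc_Weil_Div}, observing that the balancing of the previous intersection product kills the second term, and identifying the adjacent top-dimensional cells with nonzero contribution as exactly the resolutions of the distinguished vertex according to $\lambda'_1$ (the paper phrases your choice of representative $\varphi_1$ as a choice of coordinates on $\mathcal{M}_{0,4}$ so that the resolutions along $\lambda'_1$ are the ones not mapped to zero). The obstacles you flag at the end are real but are treated at the same level of detail in the paper's own proof.
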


Note that the intersection product $X$ above does not depend on $\lambda'_1,\dots,\lambda'_{l'}$. We consider $X$ up to rational equivalence. We use $\lambda'_1,\dots,\lambda'_{l'}$ to describe a representative of $X$ under this equivalence relation.
\begin{proof}
Let $\lambda'_1,\dots,\lambda'_{l'}$ be cross-ratios such that $\lambda_j$ is the degeneration of $\lambda'_j$ for $j=1,\dots,l'$. The pull-back of $0$ along $\ft_{\lambda_j}$ is given by a Cartier divisior $\ft_{\lambda_j}\left(\max(\star,\star,0)\right)$ (see  example \ref{example:pull_back_0_M_0,4}), where $\max(\star,\star,0):\left(x,y\right)\mapsto\max(x,y,0)$ is a Cartier divisor on $\mathcal{M}_{0,4}\subset \mathbb{R}^2$ (see figure \ref{Example_M_0_4}). Note that $\ft_{\lambda_j}\left(\max(\star,\star,0)\right)$ is a linear function on every cell of $\mathcal{M}_{0,n}\left(\mathbb{R}^2,\Delta \right)$ for $j=1,\dots,l'$. Therefore no refinement of $\prod_{j\neq i}\ft_{\lambda_j}^*\left( 0\right)\cdot\mathcal{M}_{0,n}\left(\mathbb{R}^2,\Delta \right)$ is necessary when intersecting with some $\ft_{\lambda_i}^*\left( 0\right)$. Hence $X$ lies in the codimension-$l'$-skeleton of $\mathcal{M}_{0,n}\left(\mathbb{R}^2,\Delta \right)$. Moreover, every intersection with a Cartier divisor lowers the dimension by one, so the dimension of $X$ is exactly the dimension of top-dimensional cells of the codimension-$l'$-skeleton of $\mathcal{M}_{0,n}\left(\mathbb{R}^2,\Delta \right)$.

To prove the last part of the lemma, we set $m=n+|\Delta|$ and identify
\begin{align*}
\mathcal{M}_{0,n}\left(\mathbb{R}^2,\Delta \right)\cong\mathcal{M}_{0,m}\times\mathbb{R}^2
\end{align*}
as in remark \ref{remark:identification:stable_maps_abstract_maps} such that it is sufficient to prove the statements for $\mathcal{M}_{0,m}$ because cross-ratio constraints only fix a tropical curve up to translation in $\mathbb{R}^2$. To do so, we use induction on the number of cross-ratio constraints. Let $m\in\mathbb{N}_{>3}$.

We start with one cross-ratio $\lambda_1=\lbrace\beta_1,\dots,\beta_4\rbrace$ with $|\lambda_1|=0$. Obviously, a top-dimensional polyhedron $\tau$ of $\ft^*_{\lambda_1}\left( 0 \right)\cdot\mathcal{M}_{0,m}$ is a top-dimensional polyhedron $\mathcal{M}_{0,m}^{(1)}$ such that $\val(v)=3+\#\lambda_v$ holds for the only $4$-valent vertex $v$ of $\mathfrak{c}(\tau)$ since $\#\lambda_v=\#\lbrace\lambda_1\rbrace=1$. Note that the three resolutions of $v$ correspond to three top-dimensional polyhedra $\sigma_{\beta_1\beta_2},\sigma_{\beta_1\beta_3},\sigma_{\beta_1\beta_4}$ of $\mathcal{M}_{0,m}$. On two of these polyhedra the map $\ft_{\lambda_j}\left(\max(\star,\star,0)\right)$ is the zero function and on one of that polyhedra it maps each point to the length of the edge that was obtained from resolving the vertex $v$. Which of the $\sigma_{\beta_1\beta_2},\sigma_{\beta_1\beta_3},\sigma_{\beta_1\beta_4}$ are mapped to zero depends on the choice of coordinates of $\mathcal{M}_{0,4}\subset\mathbb{R}^2$. Let $v_{\beta_1\beta_2}$ denote the direction vector in $\mathcal{M}_{0,m}$ associated to a tropical curve that has only one edge of length one that separates the ends $\beta_1,\beta_2$ from $\beta_3,\beta_4$ (see the following Figure) and define $v_{\beta_1\beta_3},v_{\beta_1\beta_4}$, respectively.

\begin{figure}[H]
\centering
\def\svgwidth{400pt}
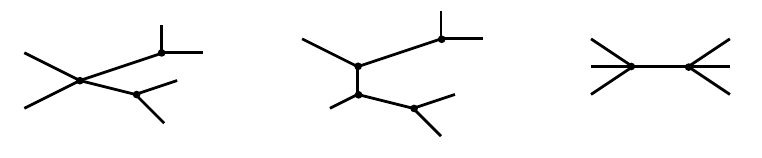
\caption{From left to right: an arbitrary $\tau$ with its $\sigma_{\beta_1\beta_3}$ and the curve associated to $v_{\beta_1\beta_3}$.}
\label{Proof_structure_of_X_notation}
\end{figure}

\noindent
We assume without loss of generality that $\sigma_{\beta_1\beta_2}$ is not mapped to zero under $\ft_{\lambda_j}\left(\max(\star,\star,0)\right)$. Therefore $v_{\beta_1\beta_2}$ is mapped to $1$ under $\ft_{\lambda_j}\left(\max(\star,\star,0)\right)$ and $v_{\beta_1\beta_3},v_{\beta_1\beta_4}$ are mapped to zero. We write $\varphi:=\ft_{\lambda_j}\left(\max(\star,\star,0)\right)$. The weight $\omega_\varphi(\tau)$ is 
\begin{align*}
\omega_\varphi(\tau)=\sum_{\sigma=\sigma_{\beta_1\beta_2},\sigma_{\beta_1\beta_3},\sigma_{\beta_1\beta_4}}\varphi_\sigma\left(\omega(\sigma)\cdot v_{\sigma/\tau}\right)-
\varphi_{\tau}\left( \sum_{\sigma=\sigma_{\beta_1\beta_2},\sigma_{\beta_1\beta_3},\sigma_{\beta_1\beta_4}} \omega(\sigma)\cdot v_{\sigma/\tau} \right),
\end{align*}
where $\varphi_\sigma,\varphi_\tau$ denote the linear parts of $\varphi$ on $\sigma,\tau$, $\omega(\sigma)=1$ denotes the weight of $\sigma$ in $\mathcal{M}_{0,m}$ and $v_{\sigma/\tau}$ denotes an arbitrary representative of the normal vector $u_{\sigma/\tau}$. Moreover, $v_{\sigma_{\beta_1\beta_2}/\tau}=v_{\beta_1\beta_2}$ and $v_{\sigma_{\beta_1\beta_3}/\tau}$, $v_{\sigma_{\beta_1\beta_4}/\tau}$, respectively. Note that the second sum is in $\tau$ as $\mathcal{M}_{0,m}$ is balanced and because of $\tau\subset\ft^{-1}_{\lambda_1}\left( 0 \right)$ this second sum vanishes under $\varphi_\tau$. As discussed above only one summand of the first sum is nonzero, namely $\sigma=\sigma_{\beta_1\beta_2}$. Hence $\omega_\varphi(\tau)=1$.

Next, we will perform the induction step from $l'-1$ to $l'$. We denote the elements of $\lambda_1$ as above, that is $\lambda_1=\lbrace\beta_1,\dots,\beta_4\rbrace$ with $|\lambda_1|=0$. We use the fact that
\begin{align*}
\prod_{j=1}^{l'}\ft_{\lambda_j}^*\left( 0\right)\cdot\mathcal{M}_{0,m}
=
\ft_{\lambda_1}^*\left( 0\right)\cdot\left(\prod_{j=2}^{l'}\ft_{\lambda_j}^*\left( 0\right)\cdot\mathcal{M}_{0,m}\right)
\end{align*}
and that use the induction hypothesis for $\prod_{j=2}^{l'}\ft_{\lambda_j}^*\left( 0\right)\cdot\mathcal{M}_{0,m}$. A top-dimensional polyhedron $\tau$ of $\ft_{\lambda_1}^*\left( 0\right)\cdot\left(\prod_{j=2}^{l'}\ft_{\lambda_j}^*\left( 0\right)\cdot\mathcal{M}_{0,m}\right)$ is a top-dimensional polyhedron of $\mathcal{M}_{0,m}^{(l')}$ such that there is a vertex $v$ of $\mathfrak{c}(\tau)$ with $\lambda_1\in\lambda_v$. Since the interior of $\tau$ is in the codimension-1-boundary of $\prod_{j=2}^{l'}\ft_{\lambda_j}^*\left( 0\right)\cdot\mathcal{M}_{0,m}$ and the cross-ratio lengths are without loss of generality small, the vertex $v$ is obtained by shrinking an edge connecting two vertices $v_1,v_2$ in the combinatorial type of a top-dimensional polyhedron of $\prod_{j=2}^{l'}\ft_{\lambda_j}^*\left( 0\right)\cdot\mathcal{M}_{0,m}$ such that
\begin{align*}
\val(v)&=3+\# \lambda_{v_1}+3+\# \lambda_{v_2} -2\\
&=4+\#\left( \lambda_{v_1}\cup\lambda_{v_2}\right)\\
&=3+\#\left( \lambda_{v_1}\cup\lambda_{v_2}\cup\lbrace \lambda_1\rbrace\right)\\
&=3+\#\lambda_v.
\end{align*}
 Again there are three resolutions of $v$ and we choose the coordinates on $\mathcal{M}_{0,4}$ such that the top-dimensional polyhedra of $\prod_{j=2}^{l'}\ft_{\lambda_j}^*\left( 0\right)\cdot\mathcal{M}_{0,n}\left(\mathbb{R}^2,\Delta \right)$  given by resolving the vertex $v$ according to the pairs of unordered numbers of $\lambda'_1$ are not mapped to zero.  The weight $\omega_\varphi(\tau)$ is 
\begin{align*}
\omega_\varphi(\tau)=\sum_{\sigma}\varphi_\sigma\left(\omega(\sigma)\cdot v_{\sigma/\tau}\right)-
\varphi_{\tau}\left( \sum_{\sigma} \omega(\sigma)\cdot v_{\sigma/\tau} \right),
\end{align*}
where the sums run over all top-dimensional polyhedra of $\prod_{j=2}^{l'}\ft_{\lambda_j}^*\left( 0\right)\cdot\mathcal{M}_{0,n}\left(\mathbb{R}^2,\Delta \right)$ that have $\tau$ in their boundaries. Since $\prod_{j=2}^{l'}\ft_{\lambda_j}^*\left( 0\right)\cdot\mathcal{M}_{0,n}\left(\mathbb{R}^2,\Delta \right)$ is balanced, the second sum is in $\tau$ and vanishes. Moreover, the arguments above yield that $\varphi_\sigma\left(v_{\sigma/\tau}\right)$ is zero if and only if $v$ is not resolved according to $\lambda'_1$. By definition $\varphi_\sigma\left(v_{\sigma/\tau}\right)=1$ otherwise.
\end{proof}

\begin{definition}[Local description of the weights of $X$]\label{definition:resolution_weights_local}
Let $\tau$ be a top-dimensional polyhedron of $X$ (for notation, see lemma \ref{lemma:description_of_X}) of weight $\omega(\tau)$. Let $\mathfrak{c}(\tau)$ be the combinatorial type of $\tau$ such that $\mathfrak{c}(\tau)$ satisfies all given degenerated cross-ratios $\lambda_1,\dots,\lambda_l$. That is, the disjoint union over all $\lambda_v$ of $\mathfrak{c}(\tau)$ is exactly $\lambda_1,\dots,\lambda_l$ and each vertex $v$ of $\mathfrak{c}(\tau)$ satisfies $\val(v)=3+\#\lambda_v$. If $v\in\mathfrak{c}(\tau)$ is a vertex with $\val(v)>3$, then cut all adjacent bounded edges of $v$, stretch the remaining edges to infinity and denote the component that contains $v$ by $C_v$. If $\lambda=\lbrace \beta_1,\dots,\beta_4\rbrace\in\lambda_v$ is a given cross-ratio and $\beta_i$ is not adjacent to $v$ after cutting some bounded edges, then replace $\beta_i$ by the label of the edge adjacent to $v$ that is contained in the shortest path from $v$ to $\beta_i$ in $\mathfrak{c}(\tau)$. Let $\tilde{\lambda}_1,\dots,\tilde{\lambda}_r$ be the cross-ratios obtained this way such that $\lbrace \tilde{\lambda}_1,\dots,\tilde{\lambda}_r\rbrace=\lambda_v$ in $C_v$ and let $\Delta'$ be the degree associated to $C_v$. The component of $v$ is by definition the $0$-dimensional cell of $\prod_{j=1}^r \ft_{\mu_j}^*\left( 0\right)\cdot\mathcal{M}_{0,n}\left(\mathbb{R}^2,\Delta' \right)$. We call its weight the \textit{local weight of $v$} and denote it by $\omega_v(\tau)$. 
\end{definition}

Using the proof of lemma \ref{lemma:description_of_X}, we can deduce the following corollary:
\begin{corollary}\label{corollary:resolution_weights_local}
Under the same assumption as lemma \ref{lemma:description_of_X}, we have that
\begin{align*}
\omega(\tau)=\prod_v \omega_v(\tau),
\end{align*}
where the product runs over all vertices of $\mathfrak{c}(\tau)$ and $\omega_v(\tau)$ is the local weight of $v$.
\end{corollary}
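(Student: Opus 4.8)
\emph{Proof strategy.} The plan is to prove the identity $\omega(\tau)=\prod_v\omega_v(\tau)$ by induction on the number $l'$ of (degenerated) cross-ratios, making explicit a feature already present in the proof of lemma \ref{lemma:description_of_X}: the weight recursion there is \emph{local}. Concretely, peeling off the first cross-ratio $\lambda_1$ only modifies $\mathfrak{c}(\tau)$ in a neighbourhood of the unique vertex $v_1$ with $\lambda_1\in\lambda_{v_1}$, namely it resolves $v_1$ into two vertices, and it leaves untouched the component $C_w$, the localized cross-ratios, the local degree $\Delta'$ and hence the local weight $\omega_w$ at every other vertex $w$.

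For the base case $l'=1$, lemma \ref{lemma:description_of_X} gives $\omega(\tau)=1$. On the other side $\mathfrak{c}(\tau)$ consists of a single $4$-valent vertex (the one carrying $\lambda_1$) together with trivalent vertices; a trivalent vertex has local weight $1$ because its local moduli space carries just one combinatorial type, of weight one, and the local weight of the $4$-valent vertex equals $1$ by the one-cross-ratio computation carried out in the proof of lemma \ref{lemma:description_of_X}. Hence $\prod_v\omega_v(\tau)=1=\omega(\tau)$.

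For the inductive step I would write $X=\ft_{\lambda_1}^*(0)\cdot X'$ with $X'=\prod_{j=2}^{l'}\ft_{\lambda_j}^*(0)\cdot\mathcal{M}_{0,n}(\mathbb{R}^2,\Delta)$ and apply lemma \ref{lemma:description_of_X}, which gives $\omega(\tau)=\sum_\sigma\omega(\sigma)$, where $\sigma$ runs over the top-dimensional polyhedra of $X'$ whose combinatorial type is obtained from $\mathfrak{c}(\tau)$ by resolving $v_1$ according to $\lambda_1'$ into two vertices $v_1',v_1''$. Each such $\sigma$ carries $l'-1$ cross-ratios, so the induction hypothesis yields $\omega(\sigma)=\prod_w\omega_w(\sigma)$; by the locality recalled above $\omega_w(\sigma)=\omega_w(\tau)$ for all $w\neq v_1',v_1''$, whence
\begin{align*}
\omega(\tau)=\Bigl(\prod_{w\neq v_1}\omega_w(\tau)\Bigr)\cdot\sum_\sigma\omega_{v_1'}(\sigma)\,\omega_{v_1''}(\sigma).
\end{align*}
It remains to show $\sum_\sigma\omega_{v_1'}(\sigma)\,\omega_{v_1''}(\sigma)=\omega_{v_1}(\tau)$. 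For this I would unwind $\omega_{v_1}(\tau)$ as the weight of the $0$-dimensional cell of $\prod_{j=1}^{r}\ft_{\mu_j}^*(0)\cdot\mathcal{M}_{0,n}(\mathbb{R}^2,\Delta')$ attached to the component $C_{v_1}$, with $\lambda_{v_1}=\{\mu_1,\dots,\mu_r\}$ and $\mu_1$ the localization of $\lambda_1$, and apply the recursion of lemma \ref{lemma:description_of_X} once more, this time on the local moduli space: this rewrites $\omega_{v_1}(\tau)$ as a sum over the local resolutions of the central vertex of $C_{v_1}$ according to $\mu_1$, and the induction hypothesis — applicable because that local situation has $r-1<l'$ cross-ratios, the exception being $v_1$ already the only vertex of $\mathfrak{c}(\tau)$, in which case $\omega_{v_1}(\tau)=\omega(\tau)$ by definition and nothing is to prove — turns each summand into the product of the two new local weights.

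The hard part will be this last matching of the two sums: one must check that the formation of the component $C_{v_1}$ — cutting the bounded edges at $v_1$, pushing the new ends to infinity, and replacing in each $\lambda\in\lambda_{v_1}$ the labels not incident to $v_1$ by the labels of the appropriate adjacent edges as in definition \ref{definition:resolution_weights_local} — commutes with resolving $v_1$ according to $\lambda_1'$. Once that is established, resolutions $\sigma$ of $v_1$ inside $\mathfrak{c}(\tau)$ are in bijection with local resolutions of the central vertex of $C_{v_1}$, the components at $v_1'$ and $v_1''$ coincide on the two sides, so the corresponding local weights coincide term by term, the desired identity follows, and substituting it back proves the corollary. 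Everything apart from this commutation statement is routine combinatorial bookkeeping.
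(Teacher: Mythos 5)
Your proof is correct and matches the paper's intent: the paper offers no explicit argument beyond ``using the proof of lemma \ref{lemma:description_of_X}, we can deduce the corollary'', and your double induction --- applying the lemma's resolution recursion once globally at the vertex carrying $\lambda_1$ and once on the local moduli space $\prod_j\ft_{\mu_j}^*(0)\cdot\mathcal{M}_{0,n}(\mathbb{R}^2,\Delta')$ attached to that vertex --- is exactly the natural way to extract the product formula from that proof. The commutation of localization with resolution that you defer is indeed the only point needing verification, and it is routine: resolving $v_1$ leaves the edges (hence the labels, direction vectors, and localized cross-ratios) at every other vertex untouched, and the edges adjacent to $v_1'$ and $v_1''$ are the same whether one resolves before or after cutting at $v_1$, so the two sums match term by term.
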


Corollary \ref{corollary:resolution_weights_local} allows us to deduce the following:

\begin{lemma}\label{lemma:nice_property}
For notation, see \ref{notation}. Let $C$ be a point in the interior of a top-dimensional polyhedron $\tau$ of $X:=\prod_{j=1}^{l}\ft_{\lambda_j}^*\left( 0\right)\cdot\mathcal{M}_{0,n}\left(\mathbb{R}^2,\Delta \right)$ such that its multiplicity $\omega(\tau)$ is nonzero. Let $v\in C$ be a vertex of $C$ such that $\val(v)>3$. Then for every edge $e$ adjacent to $v$ in $C$ there is a $\beta_i$ in some $\lambda_j\in\lambda_v$ such that $e$ is in the shortest path from $v$ to $\beta_i$. 
\end{lemma}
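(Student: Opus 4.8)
\emph{Plan.} The idea is to localise the multiplicity at $v$ via Corollary~\ref{corollary:resolution_weights_local}, reinterpret the desired conclusion purely in terms of the localized cross-ratios living on the star $C_v$, and then run an induction on $\val(v)$ using the recursive description of the weights in Lemma~\ref{lemma:description_of_X}.

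First I would use $\omega(\tau)=\prod_w\omega_w(\tau)$ from Corollary~\ref{corollary:resolution_weights_local}: since $\omega(\tau)\neq 0$, the local weight $\omega_v(\tau)$ is nonzero. Set $r:=\#\lambda_v$; as $\tau$ is top-dimensional in $X$ and $\val(v)>3$, the relation $\val(v)=3+r$ holds with $r\geq 1$. Recall (Definition~\ref{definition:resolution_weights_local}) that $\omega_v(\tau)$ is the weight with which the star $C_v$ — which has $\val(v)=3+r$ ends and carries the localized cross-ratios $\tilde{\lambda}_1,\dots,\tilde{\lambda}_r$, each supported on four distinct ends of $C_v$ — contributes to $\prod_{j=1}^{r}\ft_{\tilde{\lambda}_j}^*(0)\cdot\mathcal{M}_{0,n}(\mathbb{R}^2,\Delta')$. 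The next step is a bookkeeping observation, immediate from the relabelling rule in Definition~\ref{definition:resolution_weights_local}: identifying the ends of $C_v$ with the edges of $C$ at $v$, an edge $e$ at $v$ lies on the shortest path from $v$ to some $\beta_i$ occurring in some $\lambda_j\in\lambda_v$ if and only if (the end corresponding to) $e$ occurs among the four ends of some $\tilde{\lambda}_j$. Hence the lemma is equivalent to the assertion: if $\omega_v(\tau)\neq 0$, then every end of $C_v$ occurs in one of the $\tilde{\lambda}_j$.

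I would prove this by induction on $r$. For $r=1$ there is nothing to do, since $C_v$ then has exactly four ends and $\tilde{\lambda}_1$ is supported on four distinct ends, hence on all of them. Now let $r\geq 2$ and suppose, for contradiction, that some end $e$ of $C_v$ occurs in no $\tilde{\lambda}_j$. By Lemma~\ref{lemma:description_of_X} (whose proof applies verbatim on $\mathcal{M}_{0,\val(v)}$) together with Corollary~\ref{corollary:resolution_weights_local}, $\omega_v(\tau)$ is a sum, over the resolutions of the vertex of $C_v$ according to $\tilde{\lambda}_1=(\beta_1\beta_2|\beta_3\beta_4)$, of products $\omega_{v_1}(\sigma)\cdot\omega_{v_2}(\sigma)$, where $v_1,v_2$ are the two new vertices joined by a new end, $\lambda_v=\{\tilde{\lambda}_1\}\cup\lambda_{v_1}\cup\lambda_{v_2}$ is a disjoint union, and $\val(v_k)=3+\#\lambda_{v_k}$; all weights appearing are non-negative. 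Fix a resolution with $\omega_{v_1}(\sigma),\omega_{v_2}(\sigma)\neq 0$. The new end separates $\{\beta_1,\beta_2\}$ from $\{\beta_3,\beta_4\}$, so say $\beta_1,\beta_2$ are ends of $v_1$; and $e$ is an end of $v_1$ or of $v_2$, say of $v_1$. Since $e\notin\tilde{\lambda}_1$ we have $e\neq\beta_1,\beta_2$, so $v_1$ has the three pairwise distinct ends $\beta_1,\beta_2,e$ plus the new end, whence $\val(v_1)\geq 4$ and $\#\lambda_{v_1}=\val(v_1)-3\geq 1$. The cross-ratios at $v_1$ are the localizations of the $\tilde{\lambda}_j\in\lambda_{v_1}$, and since $e$ is an end directly at $v_1$ lying in none of the $\tilde{\lambda}_j$, it occurs in none of these localizations. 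But $\#\lambda_{v_1}<r$ and $\omega_{v_1}(\sigma)\neq 0$, so the induction hypothesis applied to $v_1$ forces every end of its star to occur in one of its localized cross-ratios, contradicting the previous sentence. Therefore no such $e$ exists, which is the assertion of the lemma.

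The step I expect to require the most care is passing from $\omega_v(\tau)\neq 0$ to the existence of a single resolution $\sigma$ with \emph{both} $\omega_{v_1}(\sigma)\neq 0$ and $\omega_{v_2}(\sigma)\neq 0$: this rests on the non-negativity of all weights (Lemma~\ref{lemma:description_of_X}) and on the multiplicativity $\omega(\sigma)=\prod_w\omega_w(\sigma)$ over the vertices of $\mathfrak{c}(\sigma)$ (Corollary~\ref{corollary:resolution_weights_local}), and one must check that both results genuinely apply in the localized setting — to $\mathcal{M}_{0,\val(v)}$, and then to $\mathcal{M}_{0,\val(v_k)}$ with the inherited cross-ratios — which they do, since the proof of Lemma~\ref{lemma:description_of_X} already reduces everything to $\mathcal{M}_{0,m}$. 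Everything else is the routine combinatorics of how a cross-ratio restricts when a bounded edge is cut.
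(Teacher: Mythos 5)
Your proof is correct, and it reaches the contradiction by a slightly different mechanism than the paper. Both arguments localise at $v$ via Corollary~\ref{corollary:resolution_weights_local} and exploit that a nonzero local weight forces the existence of resolutions. The paper then passes in one step to a \emph{total} resolution $C_v'$ (a $3$-valent curve obtained by resolving all of $\mu_1,\dots,\mu_r$), looks at the $3$-valent vertex $v_i$ of $C_v'$ carrying the offending end $e_i$, and observes that the cross-ratio responsible for the first bounded edge at $v_i$ to be shrunk must contain $e_i$. You instead peel off one cross-ratio at a time: you resolve only $\tilde\lambda_1$, use non-negativity of the weights in Lemma~\ref{lemma:description_of_X} together with the multiplicativity $\omega(\sigma)=\prod_w\omega_w(\sigma)$ to find a resolution in which both new local weights are nonzero, note that the vertex now carrying $e$ is still at least $4$-valent (since $e\notin\tilde\lambda_1$), and induct on $\#\lambda_v$, with the base case being that a single cross-ratio at a $4$-valent vertex touches all four adjacent edges (which is guaranteed by the $4$-valence requirement in the definition of $\lambda_v$). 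Your route buys a cleaner justification of the existence step — the paper's appeal to a total resolution and to ``the bounded edge shrunk first'' is left somewhat informal — at the cost of carrying the induction hypothesis through one round of localisation, where you correctly check that no entry of a cross-ratio in $\lambda_{v_1}$ can be relabelled to $e$ because $e$ is a leaf. Both proofs use exactly the same supporting results, so this is a reorganisation rather than a new method, but it is a sound one.
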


\begin{proof}
We use the notation from definition \ref{definition:resolution_weights_local}: Let $C_v$ be the component of $v$ in $C$ and let $\mu_1,\dots,\mu_r$ be the cross-ratios associated to $v$ in $C_v$. Then $\val(v)=3+r$ by lemma \ref{lemma:description_of_X}. Denote the ends adjacent to $v$ by $e_1,\dots, e_{3+r}$ suppose that there is an end $e_i$ adjacent to $v$ in $C_v$ such that there is no $\mu_j$ with $e_i\in\mu_j$. Since the multiplicity of $\tau$ is nonzero, corollary \ref{corollary:resolution_weights_local} guarantees that there is a \textit{total resolution} of $v$, that is there is a tropical curve $C_v'$ and cross-ratios $\mu'_1,\dots,\mu'_r$ such that $C_v'$ is $3$-valent and $C_v'$ arises from resolving $\mu_1,\dots,\mu_r$ in $C_v$ according to $\mu_1',\dots,\mu_r'$. The end $e_i$ does not appear in any $\mu_j$ and therefore it does not appear in any $\mu_j'$ for $j=1,\dots,r$. Let $v_i$ be the vertex of $C_v'$ to which $e_i$ is adjacent to. Note that there is a bounded edge $b$ adjacent to $v_i$ that is shrunk first when degenerating $\mu_1',\dots,\mu_r'$ step by step. Therefore there is a cross-ratio $\mu_j'$ shrinking exactly $b$. Hence $e_i$ appears in $\mu_j'$ as $v_i$ is $3$-valent. This is a contradiction.
\end{proof}

\begin{remark}\label{corollary:mult_ev_well-def}
Let $\Delta$ be a degree. Let $p_1,\dots,p_n$ be points in $\mathbb{R}^2$ and let $\lambda'_1,\dots,\lambda'_{l'},\lambda_{l'+1},\dots,\lambda_l$ be cross-ratios such that $p_1,\dots,p_n,\lambda'_1,\dots,\lambda'_l,\lambda_{l'+1},\dots,\lambda_l$ are in general position and $n+l=|\Delta|-1$ holds. Let
$$X:=\prod_{k=1}^{l'}\ft_{\lambda_k}^*\left( 0\right)\cdot\prod_{j=l'+1}^{l}\ft_{\lambda_j}^*\left( |\lambda_j|\right)\cdot\mathcal{M}_{0,n}\left(\mathbb{R}^2,\Delta \right)$$ be an intersection product, where $\lambda_1,\dots,\lambda_{l'}$ are the degenerations of $\lambda'_1,\dots,\lambda'_{l'}$. Then, using general position, the curves $\prod_{i=1}^n\ev_i^*\left( p_i\right)\cdot X$ are in the interior of top-dimensional cells of $X$.
\end{remark}

\begin{proposition}\label{prop:mult_X}
Let $\Delta$ be a degree, let $p_1,\dots,p_n,\lambda_1,\dots,\lambda_{l'},\lambda'_{l'+1},\dots,\lambda'_l$ be conditions as in \ref{notation} such that $$n+l=|\Delta|-1$$ and let $$X:=\prod_{k=1}^{l'}\ft_{\lambda_k}^*\left( 0\right)\cdot\prod_{j=l'+1}^{l}\ft_{\lambda'_j}^*\left( |\lambda'_j|\right)\cdot\mathcal{M}_{0,n}\left(\mathbb{R}^2,\Delta \right).$$ Then the multiplicity $\mult(C)$ with which a curve $C$ in $\prod_{i=1}^n\ev_i^*\left( p_i\right)\cdot X$ contributes to the degree of this $0$-dimensional cycle is 
\begin{align*}
\mult(C)=\mult_{\ev}(C)\cdot\omega(\sigma_C),
\end{align*}
where $\omega(\sigma_C)$ is the weight of the top-dimensional cell $\sigma_C$ of $X$ that contains $C$ and $\mult_{\ev}(C)$ is the absolute value of the determinant of the locally (around $C$) linear map $\ev:X\to\mathbb{R}^{2n}$.
\end{proposition}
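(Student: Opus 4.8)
The plan is to reduce the assertion to a local computation on the single top-dimensional cell $\sigma_C$ of $X$ that contains $C$, where it becomes the standard local formula for the pull-back of a generic point under a morphism to affine space.

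First I would fix the set-up. Each cross-ratio pull-back lowers the dimension by one, so by the general position hypothesis $n+l=|\Delta|-1$ the cycle $X$ has dimension $2n$. Restricting the evaluation maps to the subcycle $X$ and combining them gives a morphism of cycles $\ev=(\ev_1,\dots,\ev_n)\colon X\to\mathbb{R}^{2n}$, and $\prod_{i=1}^n\ev_i^*(p_i)\cdot X$ is the pull-back of the point $p:=(p_1,\dots,p_n)\in\mathbb{R}^{2n}$ along $\ev$, intersected with $X$. By remark \ref{corollary:mult_ev_well-def} (and using remark \ref{Remark:N_(0,n)independent of positions/lengths} to move the $p_i$ into general position), every $C$ occurring in this $0$-cycle lies in the interior of a top-dimensional cell $\sigma_C$ of $X$, so that $\ev$ restricts to an integer-linear map $\ell=\ell_C\colon\langle\sigma_C\rangle\to\mathbb{R}^{2n}$ and $\mult_{\ev}(C)=|\det(\ell_C)|$ is well defined.

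Next I would unwind the point conditions. Writing $p=\prod_{k=1}^{2n}\max\{p^{(k)},z_k\}\cdot\mathbb{R}^{2n}$ as in example \ref{example:pull_back_point}, with $z_1,\dots,z_{2n}$ the coordinates on $\mathbb{R}^{2n}$, we get $\prod_i\ev_i^*(p_i)\cdot X=\prod_{k=1}^{2n}\ev^*(\max\{p^{(k)},z_k\})\cdot X$, and $\mult(C)$ is the weight this $0$-cycle assigns to $C$. The divisor construction of definition \ref{definition:Assoc_Weil_Div} is local and $\mathbb{Z}$-linear in the weights: the weight at a face depends only on the cells through that face and their weights. Since $C$ lies in the interior of $\sigma_C$, a neighbourhood of $C$ in $X$ is just $\sigma_C$ with weight $\omega(\sigma_C)$, and there each $\ev^*(\max\{p^{(k)},z_k\})$ agrees with $\max\{p^{(k)},(\ell_C)_k\}$. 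Hence the factor $\omega(\sigma_C)$ pulls out, and it remains to check that the weight of $C$ in $\prod_{k=1}^{2n}\max\{p^{(k)},(\ell_C)_k\}\cdot[\sigma_C]$, with $\sigma_C$ now carrying weight one, equals $|\det(\ell_C)|$.

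This final equality is the standard description of the pull-back of a generic point under a morphism to affine space. Refining $X$ so that $\sigma_C$ is simplicial of full dimension $2n$ with lattice $\Lambda$, and noting that $\ell_C$ is injective (if not, $\det(\ell_C)=0$, $C$ does not occur over a generic $p$, and there is nothing to prove), one obtains $\prod_{k=1}^{2n}\max\{p^{(k)},(\ell_C)_k\}\cdot[\sigma_C]=[\mathbb{Z}^{2n}:\ell_C(\Lambda)]\cdot[C]=|\det(\ell_C)|\cdot[C]$; this can be quoted from \cite{GathmannKerberMarkwig} or \cite{Rau}, or proved by induction on $2n$ exactly parallel to the weight computation in the proof of lemma \ref{lemma:description_of_X}. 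Putting the pieces together yields $\mult(C)=\omega(\sigma_C)\cdot|\det(\ell_C)|=\mult_{\ev}(C)\cdot\omega(\sigma_C)$. I expect the only genuine subtlety to be the factorization step: one must be sure that, thanks to general position, the point conditions interact with $X$ only through the single cell $\sigma_C$, so that the (a priori global) weights of $X$ enter the computation at $C$ solely through the number $\omega(\sigma_C)$; the determinant identity itself is then routine.
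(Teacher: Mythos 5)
Your argument is correct and follows essentially the same route as the paper: the paper's proof simply cites lemma \ref{lemma:description_of_X}, remark \ref{corollary:mult_ev_well-def}, and lemma 1.2.9 of \cite{Rau}, and your proposal amounts to unpacking that citation — using general position to localize at the single cell $\sigma_C$, pulling out the factor $\omega(\sigma_C)$ by linearity of the divisor construction in the weights, and identifying the remaining contribution with $|\det(\ell_C)|$ via the standard lattice-index computation. The one subtlety you flag (that the point conditions see $X$ only through $\sigma_C$) is exactly what remark \ref{corollary:mult_ev_well-def} supplies, so there is no gap.
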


\begin{proof}
This follows from lemma \ref{lemma:description_of_X}, remark \ref{corollary:mult_ev_well-def} and lemma 1.2.9 of \cite{Rau}.
\end{proof}

Having expressed $\omega(\sigma_C)$ locally already (see corollary \ref{corollary:resolution_weights_local}), our next goal is to express $\mult_{\ev}(C)$ locally.

\begin{definition}[Free and fixed components]\label{definition:fixed_free_components}
Let $C$ be a rational tropical curve (possibly with vertices of higher valence) that is fixed by general positioned points $p_1,\dots,p_n$. Let $v$ be an $m$-valent vertex of $C$ such that there is no point lying on $v$ and denote adjacent edges of $v$ by $e_1,\dots,e_m$. Fix $i\in\lbrace 1,\dots,m\rbrace$, cut the edge $e_i$ and stretch it to infinity. Now there are two tropical curves, namely one that contains $v$ and one that does not. The tropical curve $C_i$ that does not contain $v$ is called a \textit{component of $v$}. A component of $v$ is called a \textit{fixed component of $v$} if it is fixed by the points on it (if this component is only a line, then this line is considered fixed if there is a point on it). Otherwise it is called a \textit{free component of $v$}.
\end{definition}

Note that there are exactly two fixed components of $v$: It is clear that every vertex has at least two fixed components, otherwise it could be moved. On the other hand general positioned points do not allow the number of fixed components to be greater than two. Hence the following multiplicities that generalize the well-know local $\ev$-multiplicities for $3$-valent vertices are well-defined.

\begin{definition}[Local multiplicities]\label{definition:local_multiplicities}
Let $C$ be a rational tropical curve (possibly with vertices of higher valence) that is fixed by general positioned points $p_1,\dots,p_n$. Let $v$ be a vertex of $C$. If there is a point on $v$, then define $\mult(v)=1$. Otherwise let $v$ be a vertex of $C$ with fixed components $C_1,C_2$ associated to the edges $e_1,e_2$ adjacent to $v$. Let $v_1$ denote the weighted primitive vector of $e_1$ and $v_2$, respectively. The multiplicity of $v$ is defined as
\begin{align*}
\mult_{\ev}(v):=|\det\left( v_1,v_2 \right)|.
\end{align*}
\end{definition}

Another way to think about the multiplicity of a higher-valent vertex is to add up edges of free components, to be more precise, consider the following example: 
\begin{figure}[H]
\centering
\def\svgwidth{250pt}
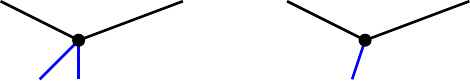
\end{figure}
\noindent On the left there is a $4$-valent vertex whose black edges belong to fixed components and its blue edges belong to free components. The multiplicity of this vertex is completely determined by its black edges. If we ``add" these blue edges (add their direction vectors), we obtain the $3$-valent vertex on the right whose multiplicity is again completely determined by its black edges.

\begin{lemma}\label{lemma:local_multiplicities}
Let $p_1,\dots,p_n,\lambda_1,\dots,\lambda_l$ be in general position, where $|\lambda_j|=0$ for $j=1,\dots,l$ and let $C$ be a rational tropical curve of some degree such that $C$ is fixed by $p_1,\dots,p_n,\lambda_1,\dots,\lambda_l$, then
\begin{align*}
\mult_{\ev}(C)=\prod_{v\mid v \textrm{ vertex of }C}\mult_{\ev}(v)
\end{align*}
\end{lemma}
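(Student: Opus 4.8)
\emph{Reduction to a determinant.} The plan is to express $\mult_{\ev}(C)$ as the absolute value of a determinant and then to bring that matrix into block--triangular form whose $2\times 2$ diagonal blocks are exactly the local contributions $\mult_{\ev}(v)$. Let $\sigma_C$ be the top--dimensional cell of $X:=\prod_{j=1}^{l}\ft_{\lambda_j}^*(0)\cdot\mathcal{M}_{0,n}\left(\mathbb{R}^2,\Delta\right)$ whose interior contains $C$. By lemma \ref{lemma:description_of_X}, $\sigma_C$ is an \emph{ordinary} (uncut) cell of $\mathcal{M}_{0,n}\left(\mathbb{R}^2,\Delta\right)$, so a set of local coordinates on $\sigma_C$ is given by the position $a\in\mathbb{R}^2$ of the root vertex $v_1$ (adjacent to the contracted end $x_1$) together with the lengths $\ell_e$ of the bounded edges $e$ of $C$; a handshake/tree count using $\val(v)=3+\#\lambda_v$ (lemma \ref{lemma:description_of_X}) and $n+l=|\Delta|-1$ shows that $C$ has $2n-1$ vertices and $2n-2$ bounded edges, so there are $2n$ such coordinates, matching $\dim X=2n$. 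By the definition of the $\ev$--multiplicity recalled in proposition \ref{prop:mult_X} (which in turn rests on lemma 1.2.9 of \cite{Rau}), $\mult_{\ev}(C)=|\det M|$, where $M$ is the matrix of $\ev=(\ev_1,\dots,\ev_n)\colon\sigma_C\to\mathbb{R}^{2n}$ in these coordinates. Since each $x_i$ is contracted, $\ev_i=h(v_i)=a+\sum_e\varepsilon_i(e)\,\ell_e\,u_e$, the sum running over the bounded edges $e$ on the unique path from $v_1$ to $v_i$ in $C$, with $u_e$ the primitive direction vector of $e$ and $\varepsilon_i(e)\in\{\pm1\}$ recording the orientation; thus the $(\ev_i,a)$--block of $M$ is $\mathrm{id}_2$ and its $(\ev_i,\ell_e)$--block equals $\varepsilon_i(e)u_e$ if $e$ lies on that path and $0$ otherwise.

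\emph{The block structure.} Since general--position points lie on pairwise distinct vertices, exactly $n$ vertices of $C$ carry a point and the remaining $n-1$ are point--free; by the remark following definition \ref{definition:fixed_free_components} each point--free vertex $v$ has exactly two fixed components, reached by two edges whose weighted primitive direction vectors (in the sense of definition \ref{definition:moduli_stable_maps}) I write $v_1^{(v)},v_2^{(v)}$. The claim I would prove is that, after permuting the rows $\ev_1,\dots,\ev_n$ and the columns (root $a$; bounded edges) and performing elementary column operations, $M$ becomes block--triangular with $2\times 2$ diagonal blocks: one block $\mathrm{id}_2$ for the pair $(\ev_1,a)$, and one block $\bigl(v_1^{(v)}\mid v_2^{(v)}\bigr)$ for each of the $n-1$ point--free vertices $v$, the remaining $2(n-1)$ rows $\ev_2,\dots,\ev_n$ and $2(n-1)$ bounded--edge columns being distributed among these blocks. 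Granting this, $|\det M|=\prod_{v\text{ point-free}}\bigl|\det(v_1^{(v)},v_2^{(v)})\bigr|=\prod_v\mult_{\ev}(v)$, the point--carrying vertices contributing the trivial factor $1$ from definition \ref{definition:local_multiplicities}; this is the assertion.

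\emph{Proving the block structure.} I would argue by induction along the tree $C$ rooted at $v_1$, processing vertices from the root outward. For an outermost \emph{trivalent} point--free vertex $v$ this is the classical computation underlying the trivalent case (lemma 1.2.9 of \cite{Rau} and the count in \cite{GathmannKerberMarkwig}): peeling off the outermost point condition together with the edges of the two fixed components of $v$ splits off the $2\times 2$ block $\bigl(v_1^{(v)}\mid v_2^{(v)}\bigr)$ and reduces the tree. For a point--free vertex $v$ of valence $>3$ one first ``collapses'' the bundle of free components hanging off $v$: by lemma \ref{lemma:nice_property} every such free edge is accounted for by a cross--ratio of $\lambda_v$, and replacing the columns of all free edges at $v$ (and of the bounded edges interior to those free components) by suitable combinations of columns nearer the root turns $v$ into a trivalent vertex whose third edge has direction $\sum_k v(e_k,v)=-\bigl(v_1^{(v)}+v_2^{(v)}\bigr)$ by the balancing condition at $v$ (definition \ref{definition:moduli_stable_maps}). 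This changes neither $|\det M|$ nor any local multiplicity and is precisely the ``adding up of the free direction vectors'' illustrated after definition \ref{definition:local_multiplicities}; iterating it reduces everything to the trivalent case.

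\emph{The main obstacle.} The delicate part is the collapsing step: one must check that the column operations genuinely lower--triangularise $M$ with the asserted diagonal blocks, i.e.\ that a free component of $v$ contributes the point conditions it may carry in a way already ``paid for'' by the columns attached to $v$ and nearer the root, so that its interior columns can be cleared without producing new entries above the diagonal — equivalently, that collapsing one free component at a time leaves $|\det M|$ and every $\mult_{\ev}(w)$ unchanged. This is where the exact--two--fixed--components property and lemma \ref{lemma:nice_property} are used essentially; once it is in place, the remaining trivalent statement is the standard argument.
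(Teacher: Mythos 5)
Your reduction to the determinant of the $\ev$-matrix in the coordinates (root position, bounded edge lengths), the dimension count $\dim X=2n$ with $2n-1$ vertices and $2n-2$ bounded edges, and the intended $2\times2$ block structure are all correct, and the underlying linear algebra is the same as what the paper relies on; but the proofs are organized differently, and the difference matters exactly at the step you flag as your main obstacle. You triangularize the full matrix in one pass from the root outward, which forces you to ``collapse'' all free components of a higher-valent vertex by explicit column operations while keeping track of any point conditions those free components carry — this is asserted but not carried out. The paper instead inducts on the number of vertices: it cuts one edge of $C$, stretches the two halves to ends, places a \emph{new point condition} on the stretched end of the piece containing $v$, and observes that both resulting curves are again rational curves fixed by points and degenerated cross-ratios, so that the factorization $\mult_{\ev}(C)=\mult_{\ev}(C_1)\cdot\mult_{\ev}(C'')$ (adapted from proposition 3.8 of \cite{KontsevichPaper}) plus the induction hypothesis finishes the argument; the base case is the explicit $4\times4$ determinant you also have, together with the one-vertex-with-a-point case. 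The auxiliary point on the cut edge is precisely the device that dissolves your obstacle, since it makes each piece fixed by its own conditions and so one never has to clear the interior columns of a free component against columns nearer the root. Your route, if completed, has the merit of exhibiting all the local $2\times2$ blocks simultaneously and of justifying the ``add up the free direction vectors'' picture after definition \ref{definition:local_multiplicities}; but to make it a proof you should either carry out the column reduction for one free component at a time (using lemma \ref{lemma:nice_property} and the exactly-two-fixed-components observation after definition \ref{definition:fixed_free_components}) or adopt the cut-and-add-a-point induction, which packages that same computation into a statement about two smaller curves.
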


\begin{proof}
We prove this by induction on the number of vertices of $C$ which is denoted by $k$. Let $k=1$ and denote the vertex of $C$ by $v$.
 There are two choices of general positioned conditions that fix this curve:
\begin{itemize}
\item[1.]
If there is no point on $v$ and $v$ is at least $3$-valent, then we have $n+2$ parameters of $C$ that need to be fixed. On the other hand each point $p_i$ for $i=1,\dots,n$ is in $\mathbb{R}^2$ and therefore $2n=n+2$ for a natural number $n>0$. Hence $n=2$, so there are two ends $e_1,e_2$ that are equipped with points. Denote the weighted primitive vector of $e_1$ (pointing away from $v$) by $u=\left(u_1,u_2\right)$ and the vector of $e_2$ by $w$, respectively. If we choose $p_1$ as the base point of the ev-matrix $M(C)$ of $C$, then
\begin{align*}
M(C)=
\left(
\begin{array}{cccc}
1&0&0&0\\
0&1&0&0\\
1&0&-u_1&w_1\\
0&1&-u_2&w_2
\end{array}
\right)
\end{align*}
has determinant $\mult(v)$.
\item[2.]
If there is a point on $v$ and this point fixes the position of $C$, then
$\mult_{\ev}(C)=1$ since it is the determinant of the $2\times 2$ identity matrix.
\end{itemize}
Let $k> 1$. In order to use induction and lower the number of vertices, we have to split off components. This has been done in the case where all vertices are $3$-valent, see prop 3.8 of \cite{KontsevichPaper}. Let $v$ be a vertex of $C$ and let $C_1$ be a component of $v$ that contains at least on vertex. Denote by $C'$ the tropical curve after cutting $e_1$ that belongs to $v$. Introduce a new point $p$ on $e'_1\in C'$, where $e_1'$ denotes the cut and stretched edge $e_1$ in $C'$ and denote $C'$ with its new point by $C''$. The proof of proposition 3.8 in \cite{KontsevichPaper} given by Gathmann and Markwig can easily be adapted to our situation, such that
\begin{align*}
\mult_{\ev}(C)=\mult_{\ev}(C_1)\cdot \mult_{\ev}(C'')
\end{align*}
holds and the induction hypothesis can be applied.
\end{proof}

We finish this section by summing up the most important results of this section in a theorem.

\begin{theorem}\label{thm:ZSFSSG_section_2}
Let $\Delta$ be a degree and let $p_1,\dots,p_n,\lambda'_1,\dots,\lambda'_l$ be conditions as defined in \ref{notation}. Let $\lambda_1,\dots,\lambda_{l}$ denote the degenerations of $\lambda'_1,\dots,\lambda'_{l}$ and define
\begin{align*}
N_{0,n}\left(\lambda_1,\dots,\lambda_l\right):=
\degree\left(\prod_{j=1}^{l}\ft_{\lambda_j}^*\left( 0\right)\cdot\prod_{i=1}^n\ev_i^*\left( p_i\right)\cdot\mathcal{M}_{0,n}\left(\mathbb{R}^2,\Delta\right)\right).
\end{align*} 
Then
\begin{align*}
N_{0,n}\left(\lambda'_1,\dots,\lambda'_l\right)
=
N_{0,n}\left(\lambda_1,\dots,\lambda_l\right)
\end{align*}
holds, where $N_{0,n}\left(\lambda'_1,\dots,\lambda'_l\right)$ is defined in definition \ref{definition:general_position_I}. Moreover, the multiplicity of a tropical curve contributing to the right side can be expressed locally as
\begin{align*}
\mult(C)=\prod_{v\mid v \textrm{ vertex of }C}\mult_{\ev}(v)\cdot \omega_v(\sigma_C),
\end{align*}
where $\omega_v(\sigma_C)$ is the local weight of the top-dimensional cell $\sigma_C$ of $X$ that contains $C$ (see definition \ref{definition:resolution_weights_local}) and $\mult_{\ev}(v)$ is defined in \ref{definition:local_multiplicities}.
\end{theorem}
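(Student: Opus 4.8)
The plan is to obtain both assertions by assembling the results of this section; essentially no new geometric input is needed, so I would not reprove anything and instead only combine and cite.

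First I would treat the numerical equality $N_{0,n}\left(\lambda'_1,\dots,\lambda'_l\right)=N_{0,n}\left(\lambda_1,\dots,\lambda_l\right)$. By Definition \ref{definition:general_position_I} the left-hand side is the degree of $\prod_{j=1}^l\ft_{\lambda_j}^*\left(|\lambda'_j|\right)\cdot\prod_{i=1}^n\ev_i^*\left(p_i\right)\cdot\mathcal{M}_{0,n}\left(\mathbb{R}^2,\Delta\right)$, whereas the right-hand side is the degree of the same expression with every point $|\lambda'_j|\in\mathcal{M}_{0,4}$ replaced by $0\in\mathcal{M}_{0,4}$. I would then invoke Remark \ref{Remark:N_(0,n)independent of positions/lengths}: any two points of $\mathcal{M}_{0,4}$ are rationally equivalent, since $\mathcal{M}_{0,4}$ embeds into $\mathbb{R}^2$ by a morphism and all points of $\mathbb{R}^2$ are rationally equivalent by Remark \ref{remark:facts_about_rational_equivalence}(c). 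By Remark \ref{remark:facts_about_rational_equivalence}(a) pulling back along $\ft_{\lambda_j}$ sends rationally equivalent cycles to rationally equivalent cycles, and intersecting further with the Cartier divisors $\ev_i^*\left(p_i\right)$ and with $\mathcal{M}_{0,n}\left(\mathbb{R}^2,\Delta\right)$ again preserves rational equivalence; hence the two $0$-dimensional cycles are rationally equivalent and, by Remark \ref{remark:facts_about_rational_equivalence}(b), have equal degree. In particular this shows $N_{0,n}\left(\lambda_1,\dots,\lambda_l\right)$ does not depend on the chosen non-degenerate representatives $\lambda'_j$.

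Next I would prove the local multiplicity formula. Setting $l'=l$ and $X:=\prod_{j=1}^l\ft_{\lambda_j}^*\left(0\right)\cdot\mathcal{M}_{0,n}\left(\mathbb{R}^2,\Delta\right)$, the cycle whose degree is $N_{0,n}\left(\lambda_1,\dots,\lambda_l\right)$ is $\prod_{i=1}^n\ev_i^*\left(p_i\right)\cdot X$. By Remark \ref{corollary:mult_ev_well-def} every contributing curve $C$ lies in the interior of a top-dimensional cell $\sigma_C$ of $X$, and Proposition \ref{prop:mult_X} gives $\mult(C)=\mult_{\ev}(C)\cdot\omega(\sigma_C)$, where $\mult_{\ev}(C)$ is the absolute value of the determinant of the locally linear map $\ev:X\to\mathbb{R}^{2n}$. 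Into this I would substitute the two factorizations already available: Corollary \ref{corollary:resolution_weights_local} gives $\omega(\sigma_C)=\prod_v\omega_v(\sigma_C)$ over the vertices of $\mathfrak{c}(\sigma_C)$ --- legitimate because Lemma \ref{lemma:description_of_X} guarantees $\val(v)=3+\#\lambda_v$ at every vertex, which is exactly the hypothesis making the local weights of Definition \ref{definition:resolution_weights_local} well defined --- and Lemma \ref{lemma:local_multiplicities}, applicable since $C$ is fixed by $p_1,\dots,p_n,\lambda_1,\dots,\lambda_l$, gives $\mult_{\ev}(C)=\prod_v\mult_{\ev}(v)$, the factors $\mult_{\ev}(v)$ being well defined because by the remark following Definition \ref{definition:fixed_free_components} every vertex carrying no point has exactly two fixed components. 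Multiplying the two products, which range over the same vertex set of $C$, yields $\mult(C)=\prod_v\mult_{\ev}(v)\cdot\omega_v(\sigma_C)$.

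The proof is therefore purely a matter of bookkeeping, and the only place calling for care --- the closest thing to an obstacle --- is to verify that the general-position conventions of Definitions \ref{definition:general_position_I} and \ref{definition:general_position_II} simultaneously validate the hypotheses of Proposition \ref{prop:mult_X}, Corollary \ref{corollary:resolution_weights_local} and Lemma \ref{lemma:local_multiplicities} for one and the same curve $C$ and cell $\sigma_C$, and that the vertex set entering the two factorizations is literally the same, so the products can be merged vertex by vertex. Both follow once one notes, via Lemma \ref{lemma:description_of_X} (and, for the higher-valent vertices, Lemma \ref{lemma:nice_property}), that the combinatorial type of $\sigma_C$ is a resolved type in the sense of Definition \ref{definition:resolution_weights_local}, so that Definitions \ref{definition:resolution_weights_local} and \ref{definition:local_multiplicities} both refer to exactly the vertices of $C$.
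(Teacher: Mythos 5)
Your proposal is correct and follows essentially the same route as the paper: the numerical equality is deduced from rational equivalence of the pulled-back points of $\mathcal{M}_{0,4}$ (remark \ref{Remark:N_(0,n)independent of positions/lengths} and remark \ref{remark:facts_about_rational_equivalence}), and the local multiplicity formula is obtained by combining proposition \ref{prop:mult_X}, lemma \ref{lemma:local_multiplicities} and corollary \ref{corollary:resolution_weights_local}, exactly as in the paper's proof. Your extra care in checking that the two factorizations range over the same vertex set and that the general-position hypotheses are simultaneously satisfied is sound bookkeeping that the paper leaves implicit.
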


\begin{proof}
The first part is a consequence of remark \ref{remark:facts_about_rational_equivalence}. For the second part, note that if $C$ is a tropical curve corresponding to a point in $\prod_{i=1}^n\ev_i^*\left( p_i\right)\cdot X$ such that 
$$X=\prod_{j=1}^{l}\ft_{\lambda_j}^*\left( 0\right)\cdot \mathcal{M}_{0,n}\left(\mathbb{R}^2,\Delta \right),$$
then the contribution of $C$ to $N_{0,n}\left(\lambda_1,\dots,\lambda_l\right)$ is
\begin{align*}
\mult(C)=\prod_{v\mid v \textrm{ vertex of }C}\mult_{\ev}(v)\cdot \omega_v(\sigma_C)
\end{align*}
due to proposition \ref{prop:mult_X}, lemma \ref{lemma:local_multiplicities} and corollary \ref{corollary:resolution_weights_local}.
\end{proof}

Combining the correspondence theorem \ref{thm:correspondence_thm_CRC} and theorem \ref{thm:ZSFSSG_section_2} enables us to enumerate classical curves satisfying point and classical cross-ratio conditions using degenerated cross-ratios. We state this in the following corollary, which is used to obtain a cross-ratio lattice path algorithm in the next section.

\begin{corollary}\label{corollary:ZSFSSG_section}
Use the same notations/assumptions as in the correspondence theorem \ref{thm:correspondence_thm_CRC} and denote the degenerations of $\lambda'_1,\dots,\lambda'_l$ by $\lambda_1,\dots,\lambda_l$. Then
\begin{align*}
N^{\textrm{class}}_{0,n}\left( \mu_1,\dots,\mu_l \right)
=
N_{0,n}\left(\lambda_1,\dots,\lambda_l\right)
\end{align*}
holds.
\end{corollary}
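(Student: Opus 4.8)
The plan is to deduce the identity by simply concatenating the two results that immediately precede it: the Correspondence Theorem \ref{thm:correspondence_thm_CRC} of \cite{IlyaCRC}, which relates the classical count to the tropical count with \emph{non-degenerated} cross-ratio lengths, and the first part of Theorem \ref{thm:ZSFSSG_section_2}, which asserts that this tropical count does not change when the cross-ratio lengths are shrunk to zero. Concretely, I would establish
$N^{\textrm{class}}_{0,n}\left(\mu_1,\dots,\mu_l\right) = N_{0,n}\left(\lambda'_1,\dots,\lambda'_l\right) = N_{0,n}\left(\lambda_1,\dots,\lambda_l\right)$,
where the first equality is Theorem \ref{thm:correspondence_thm_CRC} and the second is the first statement of Theorem \ref{thm:ZSFSSG_section_2}.

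First I would pin down the data so that both theorems apply to the same input. By hypothesis (inherited through the statement from Theorem \ref{thm:correspondence_thm_CRC}) the points $q_1,\dots,q_n$ in $X_\Sigma$ and the classical cross-ratios $\mu_1,\dots,\mu_l$ are in general position and cut out only finitely many rational curves, so $N^{\textrm{class}}_{0,n}\left(\mu_1,\dots,\mu_l\right)$ is defined; let $p_1,\dots,p_n$ and $\lambda'_1,\dots,\lambda'_l$ be their tropicalizations obtained by applying the valuation map, as in remark \ref{remark:tropicalized_cross-ratios}. Theorem \ref{thm:correspondence_thm_CRC} then yields the first equality and in particular certifies that $p_1,\dots,p_n,\lambda'_1,\dots,\lambda'_l$ are in general position in the sense of definition \ref{definition:general_position_I}, so that $N_{0,n}\left(\lambda'_1,\dots,\lambda'_l\right)$ is the degree of the $0$-dimensional cycle in \eqref{eq:general_pos_1}.

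Next I would pass to the degenerations $\lambda_1,\dots,\lambda_l$ of $\lambda'_1,\dots,\lambda'_l$. By definition \ref{definition:general_position_II}, the fact that $\lambda'_1,\dots,\lambda'_l$ are in general position guarantees that their degenerations are in general position as well, so $N_{0,n}\left(\lambda_1,\dots,\lambda_l\right)$ is well-defined by the formula in Theorem \ref{thm:ZSFSSG_section_2}. The first assertion of Theorem \ref{thm:ZSFSSG_section_2} — which ultimately rests on remark \ref{remark:facts_about_rational_equivalence} together with the observation in remark \ref{Remark:N_(0,n)independent of positions/lengths} that all points of $\mathcal{M}_{0,4}$ are rationally equivalent, so that $\ft^*_{\lambda_j}\left(|\lambda'_j|\right)$ and $\ft^*_{\lambda_j}\left(0\right)$ are rationally equivalent Cartier divisors producing $0$-cycles of equal degree — gives $N_{0,n}\left(\lambda'_1,\dots,\lambda'_l\right) = N_{0,n}\left(\lambda_1,\dots,\lambda_l\right)$. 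Chaining the two equalities yields the corollary.

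The step requiring the most care is not the algebra of concatenation but the bookkeeping of general position: one must make sure that the notion of general position under which Theorem \ref{thm:correspondence_thm_CRC} produces the tropicalized conditions is compatible with definition \ref{definition:general_position_I}, and that ``degenerating'' the lengths preserves general position in the sense of definition \ref{definition:general_position_II}. Both compatibilities are already encoded in those definitions and in remark \ref{Remark:N_(0,n)independent of positions/lengths}, so the argument is genuinely short; the only substantive task is to spell this out cleanly so that the two theorems are being applied to literally the same combinatorial data.
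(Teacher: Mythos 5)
Your proposal is correct and is exactly the paper's argument: the corollary is obtained by chaining the correspondence theorem \ref{thm:correspondence_thm_CRC} with the first assertion of theorem \ref{thm:ZSFSSG_section_2}, which in turn rests on the rational-equivalence observations of remarks \ref{remark:facts_about_rational_equivalence} and \ref{Remark:N_(0,n)independent of positions/lengths}. The extra bookkeeping you add about compatibility of the general-position notions is a reasonable (and harmless) elaboration of what the paper leaves implicit.
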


The results of this section can be generalized to counts of curves satisfying tangency conditions to the toric boundary, point conditions and cross-ratio conditions in a straight-forward way.
We make use of this in section \ref{section:special_case:floor_diagrams_cross-ratios} when dealing with floor diagrams. Here, we sum up the relevant notations.

\begin{lemma}[Evaluation of horizontal ends]\label{lemma:ev_horizontal_ends}
The pull-backs of the maps
\begin{align*}
\partial \ev_k:
\mathcal{M}_{0,n}\left(\mathbb{R}^2,\Delta\left(\alpha,\beta\right) \right)
&\to
\mathbb{R} \\
(\Gamma,x_1,\dots,x_N,h)
&\mapsto
\left( h\mid_{x_k} \right)_y
\end{align*}
are well-defined for $k=1,\dots,l(\alpha)+l(\beta)$.
\end{lemma}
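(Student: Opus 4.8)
\emph{Proof proposal.} The plan is to show that each $\partial\ev_k$ is a morphism of weighted fans in the sense of Section~\ref{section:preliminaries}; once this is done, the pull-back $\partial\ev_k^*[h]$ of a Cartier divisor $[h]$ on $\mathbb{R}$ is well-defined directly by the definition of the pull-back of Cartier divisors. Throughout I would use the identification $\mathcal{M}_{0,n}(\mathbb{R}^2,\Delta(\alpha,\beta))\cong\mathcal{M}_{0,N}\times\mathbb{R}^2$ with $N=|\Delta(\alpha,\beta)|+n$ from Remark~\ref{remark:identification:stable_maps_abstract_maps}, under which the $\mathbb{R}^2$-factor is $h(x_1)$ for a fixed contracted end $x_1$ with adjacent vertex $v_1$. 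The first, essentially formal, step is to note that for a horizontal end $x_k$ --- i.e.\ one with $v(x_k)\in\{(1,0),(-1,0)\}$, which by Definition~\ref{definition:degree} is exactly the case for the labels $k=1,\dots,l(\alpha)+l(\beta)$ of $\Delta(\alpha,\beta)$ --- the restriction $h|_{x_k}$ has constant $y$-coordinate. Hence $\partial\ev_k(\Gamma,x_1,\dots,x_N,h)=\bigl(h(V_k)\bigr)_y$, where $V_k$ denotes the unique vertex of $\Gamma$ adjacent to $x_k$.

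The second step is to compute $h(V_k)$ in moduli coordinates. Fix a combinatorial type $\sigma$ and let $\gamma_1,\dots,\gamma_s$ be the bounded edges of the path from $v_1$ to $V_k$, oriented away from $v_1$. Then on $\sigma$
\begin{align*}
h(V_k)=h(x_1)+\sum_{j=1}^{s}\ell(\gamma_j)\,v(\gamma_j),
\end{align*}
where $\ell(\gamma_j)$ are the local coordinates of $\sigma$ inherited from $\mathcal{M}_{0,N}$ and $v(\gamma_j)\in\mathbb{Z}^2$ are the direction vectors oriented along the path. Projecting to the $y$-axis shows that $\partial\ev_k|_\sigma$ is the integer-linear map $\bigl(h(x_1),(\ell(e))_e\bigr)\mapsto\bigl(h(x_1)\bigr)_y+\sum_{j}\ell(\gamma_j)\,\bigl(v(\gamma_j)\bigr)_y$. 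In other words, $h(V_k)$ is extracted from the moduli coordinates by literally the same recipe as $h(x_i)$ for a contracted marked end $x_i$.

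Finally I would argue that these linear pieces glue to a genuine morphism of fans. This is exactly the content of the proof that the ordinary evaluation maps $\ev_i$ are morphisms of fans (Proposition~4.8 of \cite{GathmannKerberMarkwig}): the signed path sum $\sum_j\ell(\gamma_j)v(\gamma_j)$ defines a function on $\mathcal{M}_{0,N}$ that is induced by a single $\mathbb{Z}$-linear map on the ambient lattice, since passing from one combinatorial type to an adjacent one only shrinks a bounded edge to length zero and the path sum is continuous with matching linear parts across the common facet. Consequently the assignment $(\Gamma,\dots,h)\mapsto h(V_k)$ is a morphism $\mathcal{M}_{0,n}(\mathbb{R}^2,\Delta(\alpha,\beta))\to\mathbb{R}^2$, and $\partial\ev_k$ is its composition with the linear projection $\mathbb{R}^2\to\mathbb{R}$ onto the $y$-coordinate; hence $\partial\ev_k$ is a morphism of fans and pull-backs along it are well-defined.

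I expect the only real obstacle to be this last gluing step: verifying that the vertex $V_k$ and the chosen path vary compatibly across the fan so that $\bigl(h(V_k)\bigr)_y$ is globally linear rather than merely continuous and piecewise linear. Since $x_k$ has exactly one adjacent vertex and $V_k$ is well-defined for every curve, and since the path-sum argument is the same one used for the contracted evaluation maps, this is handled by transporting the proof of Proposition~4.8 of \cite{GathmannKerberMarkwig} verbatim; no genuinely new input is needed.
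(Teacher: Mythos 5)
Your proposal is correct and follows essentially the same route as the paper: the paper likewise writes $\partial\ev_k=\pi_y\circ\ev_h$ for the evaluation at the (vertex adjacent to the) non-contracted end and cites Proposition~1.12 of \cite{JohannesIntersectionsonTropModuliSpaces} for the fact that this evaluation is a morphism of fans, which you instead re-derive via the path-sum argument of Proposition~4.8 of \cite{GathmannKerberMarkwig}. The only difference is that you unpack the cited proposition rather than invoking it.
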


\begin{proof}
This follows immediately from
\begin{align}\label{eq:delta_ev_equals_projection_ev}
\partial\ev_k=\pi_y\circ \ev_h
\end{align}
for some label $h$ of an ending, where $\pi_y$ is the projection on the $y$-coordinate of $\mathbb{R}^2$ and proposition 1.12 of \cite{JohannesIntersectionsonTropModuliSpaces}.
\end{proof}

The pull-back of a map $\partial \ev_k$ for some $k$ imposes a condition on the height of a horizontal end, corresponding to tangency conditions with the toric boundary. General position for point-, end- and cross-ratio conditions can be defined analogously to 
 definitions \ref{definition:general_position_I} and \ref{definition:general_position_II}. The multiplicity of a curve in a $0$-dimensional cycle in the moduli space of rational tropical stable maps corresponding to point-, end- and cross-ratio conditions can be computed as in lemma \ref{lemma:local_multiplicities}.

\section{Cross-ratio lattice path algorithm}\label{section:cross-ratio_lattice_path_algorithm}

In this section we present a generalized lattice path algorithm to determine the number of tropical curves passing through prescribed points and satisfying given degenerated cross-ratio constraints.

\begin{definition}~\vspace{-\baselineskip}
\begin{itemize}
\item
An \textit{edge} $E$ is a $1$-dimensional lattice polytope in $\mathbb{R}^2$ consisting of one $1$-dimensional face and two $0$-dimensional faces. A \textit{labeled edge} is a tuple $\left( E,\tau^E\right)$, where $\tau^E$ is a multiset of $m>0$ elements denoted by $\tau_1,\dots,\tau_m$ in $\mathbb{N}_{>0}$ such that $\sum_i \tau^E_i=|E|$, where $|E|$ denotes the lattice length of $E$. We refer to $\tau^E$ as \textit{labeling} of $E$ and to $\tau_1,\dots,\tau_m$ as \textit{labels} of $E$.
\item
In particular, we call a labeled edge $\left(E,\tau \right)$ where $\tau=\lbrace n\rbrace$ for some $n\in\mathbb{N}_{>0}$ a \textit{segment}.
\item 
Let $P$ be a lattice polytope in $\mathbb{R}^2$ where each of its $e$  facets is a labeled edge. Denote the labeling of an edge $E^j$ of $P$ by $\tau^j$. Then $\left( P,\tau\right)$ with $\tau=\left(\tau^1,\dots,\tau^e \right)$ is called a \textit{labeled polytope}.
\end{itemize}
\end{definition}

\begin{definition}[Minkowski labeled polytopes]\label{definition:Minkowski_labeled_polytopes}
Let $P$ be the Minkowski sum of a labeled polytope $\tilde{P}\subset\mathbb{R}^2$ that is either $0$-dimensional or $2$-dimensional and segments $S_1,\dots,S_r$ such that each segment is parallel to an edge of $\tilde{P}$ and $P$ is $2$-dimensional. Note that if $\tilde{P}$ is a point, then every segment is by definition parallel to it. Moreover, we require that if $\tilde{P}$ is $0$-dimensional, then there are two segments $S_{i_1},S_{i_2}\in\lbrace S_1,\dots, S_r\rbrace$ such that all other Minkowski summands of $P$ are parallel to one of them. Let $E$ be an edge of $P$ and denote by $F_1,\dots,F_k$ edges of the Minkowski summands $\tilde{P},S_1,\dots,S_r$ that contribute to $E$. If $\tau^{F_i}$ is the labeling of $F_i$, then we define $\mathcal{\tau}^E$ to be the multiset
\begin{align*}
\tau^E:=\tau^{F_1}\cupdot\dots\cupdot\tau^{F_k}.
\end{align*}
A pair $\left( P,\tau \right)$ of such a polytope $P$ with $e$ edges $E^1,\dots,E^e$ and a tuple of multisets $\tau=\left( \tau^{E^1},\dots,\tau^{E^e} \right)$, where $\tau^{E^i}$ is defined above, together with maps that \textit{match} labels to the summands they come from
\begin{align*}
f_P\mid_E:\tau^E \to \lbrace \tilde{P},S_1,\dots,S_r \rbrace
\end{align*}
such that if $f_P\mid_E(t)=A\in\lbrace \tilde{P},S_1,\dots,S_r \rbrace$, then $t\in\tau^{F_i}$ for $F_i\subset A$, is called a \textit{Minkowski labeled polytope}.
\end{definition}

\begin{figure}[H]
\centering
\def\svgwidth{400pt}
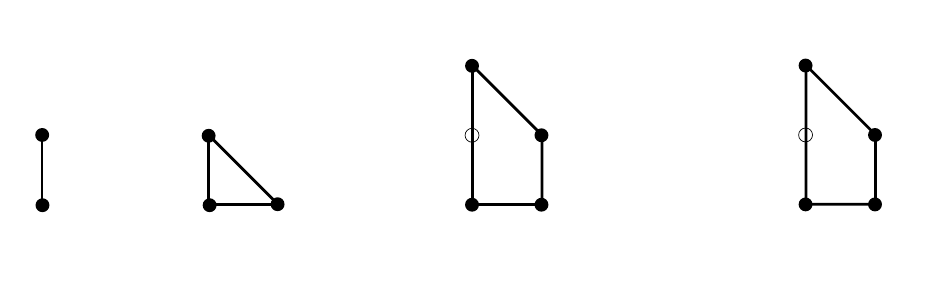
\caption{From left to right: A Segment $S_1$ and a $2$-dimensional labeled polytope $\tilde{P}$ whose Minkowski sum forms the labeled polytopes $P_1,P_2$ on the right. The colors indicate the matching of labelings of $P_1,P_2$ to their Minkowski summands. $P_1$ and $P_2$ just differ by the way the multiset $\tau^E$ on the left edge is listed, they are the same Minkowski labeled polytope.}
\label{Example_uniquely_labeled_polytope}
\end{figure}

We always denote the non-segment Minkowski summand of a Minkowski labeled polytope $P$  by $\tilde{P}$.

\begin{definition}~\vspace{-\baselineskip}
\begin{itemize}
\item
A Minkowski labeled poyltope $P$ is called \textit{$k$-marked} if $\tilde{P}$ has $e$ edges $E^j$ with labelings $\tau^j$ such that $\sum_{j=1}^{e} |\tau^j|=3+k$ holds, where $|\tau^j|\in\mathbb{N}_{>0}$ is the number of entries of $\tau^j$. If $k=0$ or $\tilde{P}$ is $0$-dimensional, then $P$ is called \textit{unmarked}.

\item
A Minkowski labeled polytope is called \textit{valid polytope} if it is either unmarked or $k$-marked. Two valid polytopes that share an edge $E$ are \textit{compatible} if their labelings of $E$ coincide.
\item
Let $\tilde{P}$ be a $1$-dimensional polytope where each side of its edge $E$ is equipped with a labeling. The Minkowski sum of $\tilde{P}$ with segments $S_1,\dots,S_r$ parallel to it, where each summand contributes a label to the two labelings of $E$ as in definition \ref{definition:Minkowski_labeled_polytopes} is called a \textit{pointed segment}. If $\tilde{P}$ is $0$-dimensional, then it is called a non-pointed segment (all $S_i$ are then parallel). The notion of compatibility extends to (non-)pointed segments as well: If a valid polytope and a (non-)pointed segment share an edge, then they are compatible if their labelings on this (side of the) edge coincide. We can refer to a (non-)pointed segment as $k$-marked as above. 
\end{itemize}
\end{definition}

\begin{definition}[Coloring]
A \textit{coloring} of a labeled polytope $P$ is a $2$-coloring of all of its labels on each of its edges. 
The two colors are called \textit{fixed} and \textit{free}. A colored polytope is called \textit{free} (or \textit{fixed}) if it is monochromatic of the color free (or fixed). Given a colored Minkowski labeled polytope $P$, we say that exactly $\tilde{P}$ is fixed if all labels associated to $\tilde{P}$ are colored fixed and the rest is colored free.
\end{definition}

\begin{algorithm}[Adjusting colors of two compatible polytopes.]\label{algorithm:adjusting_colors_two_polytopes}
Let $P_1,P_2$ be two colored polytopes that are compatible and denote their shared edge by $E$ with labelings $\tau_{P_1}^E,\tau_{P_2}^E$. Let $f_{P_1}\mid_E,f_{P_2}\mid_E$ be maps as in definition \ref{definition:Minkowski_labeled_polytopes} and let $g:\tau_{P_1}^E\to\tau_{P_2}^E$ be a bijective map such that $g(t)=t$ for all $t\in\tau^E\cap\mathbb{N}_{>0}$. Let $t\in\tau_{P_1}^E$ be a colored label of $E$ in $P_1$ and let $g(t)$ be its image under $g$ in $\tau_{P_2}^E$. When comparing and adjusting the colors of $t$ and $g(t)$, we follow the slogan ``fixed wins":
\begin{itemize}
\item[(1)] If $t$ is colored fixed and $g(t)$ is colored fixed, we leave the colors the way they are.
\item[(2)] If $t$ is colored fixed and $g(t)$ is colored free, we change $g(t)$ to fixed. When changing $g(t)$ to fixed, we check whether all other labels coming from $f_{P_2}\mid_E(g(t))$ are fixed. If this is not the case, then change them to fixed if $f_{P_2}\mid_E(g(t))$ is a segment. If $f_{P_2}\mid_E$ associates $g(t)$ to $\tilde{P}_2$, then change the labels associated to $\tilde{P}_2$ to fixed if exactly two of the labels associated to $\tilde{P}_2$ are fixed (where $g(t)$ is one of them).
\item[(3)] If $t$ is colored free and $g(t)$ is colored fixed, then do the same as in (2) but with the roles of $t,g(t)$ and $P_1,P_2$ exchanged.
\item[(4)] If $t$ is colored free and so is $g(t)$, then do nothing.
\end{itemize}
We repeat this procedure using different labels in $\tau_{P_1}^E$ until no color of labels of $P_1,P_2$ can be changed according to the rules above. Note that this algorithm terminates since colors can only be changed from free to fixed.
\end{algorithm}

\begin{algorithm}[Adjusting colors of a set of polytopes]\label{algorithm:adjusting_colors_set_polytopes}
Let $P_1,\dots,P_z$ be a finite set of colored polytopes, where two polytopes are compatible if they share an edge. Go through all pairs of compatible polytopes of $P_1,\dots,P_z$ and adjust their colors according to algorithm \ref{algorithm:adjusting_colors_two_polytopes}. Repeat this procedure until no colors can be changed. This algorithm terminates because we only allow changing a color from free to fixed, following the slogan that fixed wins.
\end{algorithm}

Note that the notion of coloring and adjusting colors extends to (non-)pointed segments.

The following definitions can be found in \cite{MikhalkinLatticePaths} and \cite{rag-rug}.

\begin{definition}[Lattice path]\label{definition:lattice_path}
Fix $\theta$ to be a linear map of the form
\begin{align*}
\theta:\mathbb{R}^2\to\mathbb{R}, \left(x,y\right)\mapsto x-\epsilon y,
\end{align*}
where $\epsilon$ is a small irrational number. A path $\gamma:\left[ 0,n\right]\to\mathbb{R}^2$ is called a \textit{lattice path} if $\gamma\mid_{[j-1,j]}$ for $j=1,\dots,n$ is an affine-linear map and $\gamma(j)\in\mathbb{Z}^2$ for all $j=0,\dots,n$. For $j=1,\dots,n$, we call $\gamma\mid_{[j-1,j]}\left([j-1,j] \right)$ a \textit{step} (the $j$-th step) of the lattice path $\gamma$. A lattice path is called \textit{$\theta$-increasing} if $\theta\circ\gamma$ is strictly increasing. If every step in a lattice path is a labeled edge, the lattice path is called \textit{labeled lattice path}.
\end{definition}

\begin{definition}[Cross-ratio lattice path]\label{definition:rag-rug}
Let $\Sigma$ be a polytope in $\mathbb{R}^2$ and let $n\in\mathbb{N}_{>0}$. Let $\mathcal{A}$ be a set $\lbrace P_1,\dots,P_{n+z} \rbrace$ of colored polytopes in $\Sigma$ such that there are polytopes $\lbrace P_{i_1},\dots,P_{i_n}\rbrace\subset\mathcal{A}$ such that $P_{i_j}$ is a pointed segments or a valid polytope such that $\tilde{P}_{i_j}$ is fixed and not $0$-dimensional for $j=1,\dots,n$. The other polytopes in $\mathcal{A}\backslash\lbrace P_{i_1},\dots,P_{i_n}\rbrace$ are non-pointed segments that are colored free. The set $\mathcal{A}$ is called a \textit{cross-ratio lattice path} if the following conditions are satisfied:
\begin{itemize}
\item[(1)]
two polytopes $P_i,P_j$ intersect in at most one point,
\item[(2)]
if an edge $E$ of a polytope $P_i$ lies in the boundary $\partial\Sigma$ of $\Sigma$ it is labeled by $\tau^E=\left( 1,\dots, 1\right)$,
\item[(3)]
 there are sets $\gamma_+,\gamma_-$ of edges of $P_1,\dots,P_{n+z}$ such that $\gamma_+,\gamma_-$ form $\theta$-increasing labeled lattice paths, $\gamma_+\cup\gamma_-$ is the set of all edges of $P_1,\dots,P_{n+z}$ and for all $x\in\pi_x\left( \Sigma\right)$ (where $\pi_x$ is the projection of $\mathbb{R}^2$ to the $x$-axis) and  all $E_+\in\gamma_+,E_-\in\gamma_-$ such that there are points $\left(x,y_+ \right)\in E_+  \subset\mathbb{R}^2,\left(x,y_- \right)\in E_-  \subset\mathbb{R}^2$ the inequality $y_+\geq y_-$ holds (see Figure \ref{Example_upper_and_lower_path}),
\item[(4)]
the order of the polytopes $P_1,\dots,P_{n+z}$ agrees with the obvious order given by $\gamma_+$ and $\gamma_-$, respectively,
\item[(5)]
let $p$ and $q$ be the points in $\Sigma$ where $\theta\mid_\Sigma$ reaches its minimum (resp. maximum), then $p=\gamma_+(0)=\gamma_-(0)$ and $q=\gamma_+(n_+)=\gamma_-(n_-)$, where $\gamma_+:\left[ 0,n_+\right]\to\mathbb{R}^2$ and $\gamma_-:\left[ 0,n_-\right]\to\mathbb{R}^2$ are defined as above.
\end{itemize}
\end{definition}

\begin{figure}[H]
\centering
\def\svgwidth{400pt}
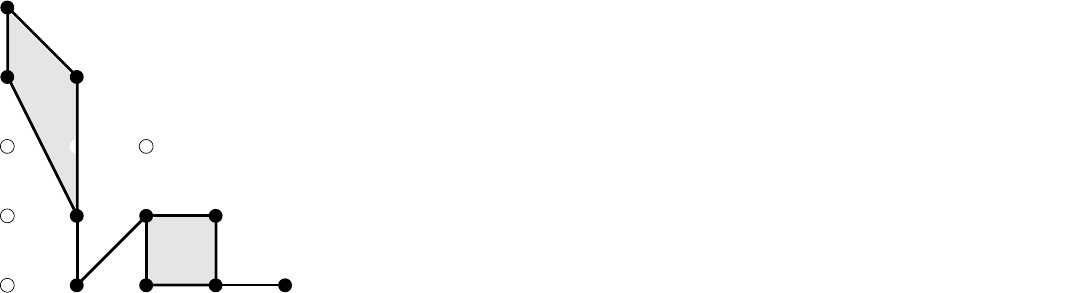
\caption{ Let $\Sigma=\operatorname{conv}\left((0,d),(0,0),(d,0) \right)$. From left to right: $\mathcal{A}=\lbrace P_1,\dots,P_5\rbrace, \gamma_+, \gamma_-$.}
\label{Example_upper_and_lower_path}
\end{figure}

Throughout the following, we fix a degree $\Delta(\Sigma)$ from a polytope $\Sigma\subset\mathbb{R}^2$, see definition \ref{definition:degree}, point conditions $p_1,\dots,p_n$ and degenerated cross-ratio constraints $\lambda_1,\dots,\lambda_l$ in general position.

\begin{construction}[Constructing subdivisions of $\Sigma$ from a cross-ratio lattice path $\mathcal{A}$]\label{construction:subdivisions_from_lattice_path}
Let $\mathcal{A}$ be a cross-ratio lattice path in the polytope $\Sigma$ with $\#\mathcal{A}=n+z$ for some $z\in\mathbb{N}$ such that $z\leq\#\left(\Sigma\cap\mathbb{Z}^2\right)$. Let $\gamma_+$ be the associated labeled lattice path as before. Recall that in the ``standard" lattice path algorithm left (resp. right) turns of a given lattice path are filled up with triangles and parallelograms. In our case we must allow more polytopes than only triangles and parallelograms.

Let $\gamma_+(j)$ and $\gamma_+(j+1)$ be the $j$-th and the $(j+1)$-th labeled edge of $\gamma_+$ that form the first left turn. Fill up this left turn with a valid polytope $P\subset\Sigma$ that is colored free, whose edges that equal $\gamma_+(j)$ and $\gamma_+(j+1)$ are compatible with $\gamma_+(j)$ and $\gamma_+(j+1)$ and if $P$ shares other edges with our polytopes, it should there be compatible, too. Whenever two compatible labeled edges with labelings $\tau^E$ come together, we choose a bijective map $g:\tau^E\to\tau^E$ such that $g(t)=t$ for all $t\in\tau^E\cap\mathbb{N}_{>0}$. Moreover, we use algorithm \ref{algorithm:adjusting_colors_set_polytopes} to adjust the colors of the set of polytopes we have so far. If $P$ shares an edge $E$ with $\partial \Sigma$, then we require $\tau^E=\left( 1,\dots, 1\right)$ and we choose a bijective map $g':\tau^E\to M$, where $M$ is a submultiset of the labels of the degree $\Delta(\Sigma)$ that are associated to vectors orthogonal (and pointing away from $\Sigma$) to $E$ (see definition \ref{definition:degree}). When another polytope $P'$ shares an edge with $\partial \Sigma$, then we choose $M'$ in the set of labels of $\Delta(\Sigma)$ minus $M$ and so on. In the same way the right turns of $\gamma_-$ can be filled up.

Repeating these steps, we obtain subdivisions of $\Sigma$ if and only if $\Sigma=\mathcal{A}\cup\bigcup \lbrace P\rbrace$, where the union runs over all valid polytopes $P$ used to fill up turns during the process described above. The cells of such a subdivision are valid polytopes which are compatible and connected via maps called $g$ above. Such a subdivision is called a \textit{lattice path subdivision of $\mathcal{A}$} if all polytopes are fixed. The set of all lattice path subdivisions of $\mathcal{A}$ is denoted by $\mathcal{S}_0(\mathcal{A})$.
\end{construction}

\begin{figure}
\centering
\def\svgwidth{200pt}
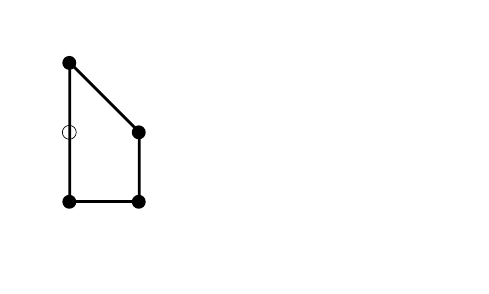
\caption{On the left is the Minkowski labeled polytope $P_1$ introduced in Figure \ref{Example_uniquely_labeled_polytope} and on the right is its dual tropical curve.}
\label{Example_dual_tropical_curve}
\end{figure}

\begin{construction}[Dual tropical curve]\label{construction:dual_tropical_curve}
Let $\mathcal{S}\in\mathcal{S}_0(\mathcal{A})$ be a lattice path subdivision. We want to construct the dual tropical curve $C_\mathcal{S}\in\mathcal{M}_{0,n}\left(\mathbb{R}^2,\Delta(\Sigma) \right)$ to $\mathcal{S}$. For that draw a $k$-valent vertex $v$ for every $k$-marked ($k>0$) polytope $P$ in $\mathcal{S}$ and an edge passing through this vertex for every segment of $P$. An edge $e$ adjacent to $v$ is dual to an edge $E$ of $\tilde{P}$, that is the weight of $e$ is given by an entry of the labeling $\tau^E$ of $E$. The weight of an edge passing through $v$ is given by the label of its associated segment that is dual to this edge.
If two polytopes $P,Q\in\mathcal{S}_0(\mathcal{A})$ share an edge $E$ with labeling $\tau^E$, we connect the edge associated to $\tau^E_i$ in $P$ with the edge associated to $g\left( \tau^E_i\right)$ in $Q$ for all $i$, where $g$ is a map as in construction \ref{construction:subdivisions_from_lattice_path}. Moreover, if $P\in\mathcal{A}$ and $P$ is neither a pointed segment nor a non-pointed segment, then add a point (a contracted end) to the vertex dual to $\tilde{P}$. If $P\in\mathcal{A}$ and $P$ is a pointed segment, then the edges dual to the labelings associated to $\tilde{P}$ meet in one vertex which is in addition adjacent to a point. In this way, we obtain the combinatorial type of $C_\mathcal{S}$. From the general construction of tropical curves dual to lattice paths (see \cite{MikhalkinFundamental}) and the fact that all polytopes are fixed, it follows that for given points $p_1,\dots,p_n$ in general position linearly ordered on a line with a small negative slope such that distances grow ($|p_{i}-p_{i-1}| <<|p_{i+1}-p_i|$) there is exactly one curve of type $C_\mathcal{S}$ that satisfies the point conditions.
\end{construction}

Since we are only interested in genus zero curves, we need to remove  subdivisions whose dual tropical curves are reducible. We denote the  set of lattice path subdivisions for a given cross-ratio lattice path $\mathcal{A}$ which are dual to irreducible tropical curves by $\mathcal{S}_1(\mathcal{A})$.

\begin{definition}\label{definition:CR_fit}
Let $\Lambda:=\bigcup_{j=1}^l \lambda_j$ the union of all given degenerated cross-ratio constraints. Let $\mathcal{S}$ be a lattice path subdivision in $\mathcal{S}_1(\mathcal{A})$ and let $P$ be a valid polytope or a pointed segment in $\mathcal{S}$. Consider the summand $\tilde{P}$ of $P$ and define for all entries $\tau_1,\dots,\tau_m$ of labelings of edges of $P$ associated to $\tilde{P}$  the sets $\Lambda(P,i)\subset\Lambda$ of points and ends appearing in the cross-ratios $\lambda_1,\dots,\lambda_l$ that can be reached from $P$ via $\tau_i$. That is, we obtain the elements of $\Lambda(P,i)$ with the following procedure:
\begin{itemize}
\item
If the edge $E$ of $P$ where $\tau_i$ appears is contained in $\partial\Sigma$, then its dual edge is a labeled end determined by $g(\tau_i)$ (construction \ref{construction:subdivisions_from_lattice_path}), and we add it to $\Lambda(P,i)$.
\item
Else there is a valid polytope (or a pointed segment) $Q$ in $\mathcal{S}$ such that $Q\neq P$ and $P,Q$ share an edge $E$ such that $\tau_i$ appears in $\tau^E$. Then either:
\begin{itemize}
\item
$\tau_i$ is mapped to $\tilde{Q}$ (via the map $f_Q\mid_E$ from definition \ref{definition:Minkowski_labeled_polytopes}) and $Q\nin\mathcal{A}$, then continue with all other labels mapped to $\tilde{Q}$ instead of $\tau_i$. 
\item
$\tau_i$ is mapped to $\tilde{Q}$ and $Q=P_j\in\mathcal{A}$, then  add the marked point $x_j$ to $\Lambda(P,i)$ and continue with all other labels mapped to $\tilde{Q}$ instead of $\tau_i$
\item
$\tau_i$ is mapped to a segment of $Q$, then there is exactly one $\tau_i'$ in another edge $E'$ of $Q$ that is mapped to the same segment. We continue with this.
\end{itemize}
In each case, we follow all appearing edges until we reach edges in $\partial\Sigma$ for which we add the labels of the dual ends to $\Lambda(P,i)$.
\end{itemize}

Furthermore, if $P$ is a polytope appearing in the lattice path $\mathcal{A}$ itself as $j$-th step, then we set $\Lambda(P,0):=\lbrace
 x_j \rbrace$, the $j$-th marked point. Otherwise, we set $\Lambda(P,0):=\emptyset$.

Moreover, we define 
\begin{align*}
\Lambda(P)&:=\lbrace \lambda_j=\lbrace \beta_{j_1},\dots,\beta_{j_4}\rbrace\mid \beta_{j_i}\in\Lambda(P,k_i) \textrm{ for $i=1,\dots,4$ and $k_i\neq k_{i'}$ if $i\neq i'$} \rbrace,
\end{align*}
and we say that the lattice path subdivision $\mathcal{S}$ \textit{fits} the cross-ratios $\lambda_1,\dots,\lambda_l$ if
\begin{align*}
\sum_{P} \#\Lambda(P)=l,
\end{align*}
where the sum goes over all valid polytopes and pointed segments in $\mathcal{S}$ and
\begin{align*}
\#\Lambda(P)=
\begin{cases}
k & \textrm{, if $P$ is $k$-marked}\\
0 & \textrm{, otherwise.}
\end{cases}
\end{align*}
\end{definition}

For a cross-ratio lattice path $\mathcal{A}$, the subset of $\mathcal{S}_1(\mathcal{A})$ of subdivisions which fit the given cross-ratios is denoted  by $\mathcal{S}_2(\mathcal{A})$.

\begin{definition}[Multiplicity of a subdivision]
In order to associate a multiplicity to a lattice path subdivision $\mathcal{S}$ in $\mathcal{S}_2(\mathcal{A})$, define
\begin{align*}
\mult_{\ev}(\mathcal{S}):=\prod_{P} \mult_{\ev}(P),
\end{align*}
where the product goes over all valid polytopes and pointed segments in $\mathcal{S}$, and $\mult(P)$ is defined as follows: If $\tilde{P}$ is $0$-dimensional or $P\in\mathcal{A}$, then $\mult(P):=1$. Otherwise let $\tau_1,\dots,\tau_m$ denote the entries of labelings of edges of $P$ associated to $\tilde{P}$, let $\mathcal{E}_i$ be the number of ends that can be reached from $P$ via $\tau_i$ and let $\mathcal{C}_i$ be the number of constraints that can be reached from $P$ via $\tau_i$ (using the procedure from definition \ref{definition:CR_fit}), that is
\begin{align*}
\mathcal{C}_i&:=\mathcal{C}_i^{\left(\textrm{points}\right)}+\mathcal{C}_i^{\left(\textrm{cross-ratios}\right)},\\
\mathcal{C}_i^{\left(\textrm{cross-ratios}\right)}&:= \sum_{P'} \#\Lambda(P'),
\end{align*}
where the sum goes over all valid polytopes and pointed segments in $\mathcal{S}$ that can be reached from $P$ via $\tau_i$, $\Lambda(P')$ is defined in \ref{definition:CR_fit} and $\mathcal{C}_i^{\left(\textrm{points}\right)}$ is the number of points that can be reached from $P$ via $\tau_i$. We have either $\mathcal{E}_i-1=\mathcal{C}_i$ or
$\mathcal{E}_i-2=\mathcal{C}_i$: in the first case, the edge dual to $\tau_i$ in the tropical curve leads to a fixed component, in the second to a free component (see definition \ref{definition:fixed_free_components}). Every vertex of the dual tropical curve has exactly two fixed components, we use the indices $i_0$ and $i_1$ for those labels corresponding to edges in the dual tropical curve that lead to a fixed component. Then we set
\begin{align*}
\mult_{\ev}(P):=|\det\left(\tau_{i_0}\cdot v_0, \tau_{i_1}\cdot v_1 \right)|,
\end{align*}
where $v_0$ is the primitive vector of the edge $E_0$ of $P$ that belongs to $\tau_{i_0}$ and $v_1$, respectively.

Furthermore let $C_\mathcal{S}$ be the dual tropical curve of $\mathcal{S}$ (see construction \ref{construction:dual_tropical_curve}). Let $X:=\prod_{j=1}^{l}\ft_{\lambda_j}^*\left( 0\right)\cdot\mathcal{M}_{0,n}\left(\mathbb{R}^2,\Delta(\Sigma) \right)$. Note that $C_\mathcal{S}\in X$ since the lattice path subdivision $\mathcal{S}$ fits the cross-ratios $\lambda_1,\dots,\lambda_l$. Moreover, $C_\mathcal{S}$ passes through the points $p_1,\dots,p_n$ by construction \ref{construction:dual_tropical_curve}. Using remark \ref{corollary:mult_ev_well-def}, we know that $C_\mathcal{S}$ lies in the interior of a top-dimensional cell of $X$. Denote this top-dimensional cell by $\sigma_\mathcal{S}$ and define $\omega (\sigma_\mathcal{S})$ to be its weight. Recall that this weight has a local structure, see corollary \ref{corollary:resolution_weights_local}.

We define the \textit{multiplicity $\mult(\mathcal{S})$ of $\mathcal{S}$} as
\begin{align*}
\mult(\mathcal{S}):=\mult_{\ev}(\mathcal{S})\cdot \omega (\sigma_\mathcal{S}).
\end{align*}
\end{definition}

By definition, we have $\mult(\mathcal{S}) = \mult(C_\mathcal{S})$ for all $\mathcal{S}\in\mathcal{S}_2(\mathcal{A})$.

\begin{definition}\label{definition:number_lpa}
Given cross-ratio constraints $\lambda_1,\dots,\lambda_l$, we denote the sum over all $\mathcal{S}\in\mathcal{S}_2(\mathcal{A})$ (counted with multiplicity) for all cross-ratio lattice paths $\mathcal{A}$ with $n+z$ steps for all $z$ by $N_{0,n}^{\textrm{lpa}}\left(\lambda_1,\dots,\lambda_l\right)$.
\end{definition}

\begin{remark}[Arbitrary degree]
Note that we do not need to restrict to a degree $\Delta$ coming from a polytope where all entries of all partitions are one (see definition \ref{definition:degree}). We restrict to $\Delta(\Sigma)$ here to keep notation as simple as possible. The cross-ratio lattice path algorithm can be extended to arbitrary degrees.
\end{remark}

\begin{figure}[H]
\centering
\def\svgwidth{380pt}
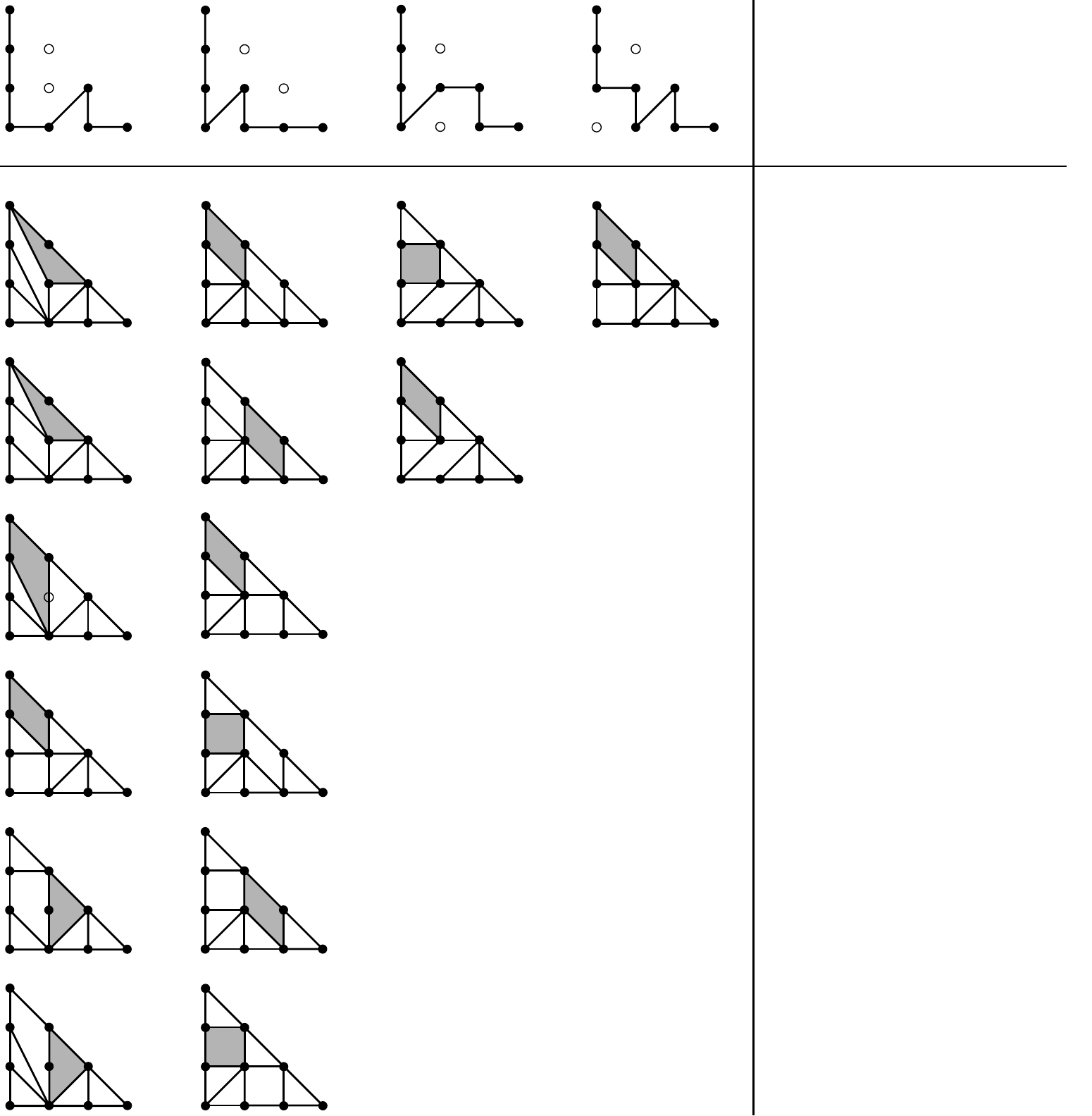
\caption{A complete example of lattice paths, subdivisions and their multiplicities.}
\label{Example_complete_lattice_path}
\end{figure}

\begin{example}\label{example:Kontsevich_CR_2}
We want to give an example of the lattice path algorithm. Fix the degree $\Delta_d$ for $d=3$ (cf. definition \ref{definition:degree}). We choose points $p_1,\dots,p_7$ and a degenerated cross-ratio $\lambda=\lbrace x_1,x_2,7,8\rbrace$. It turns out that all cross-ratio lattice paths we need to consider have $7$ steps. The top row of Figure \ref{Example_complete_lattice_path} shows these cross-ratio lattice paths. There are no labels on polytopes and colors in Figure \ref{Example_complete_lattice_path} because all labels are $1$ and all labels are colored fixed. The column under each of these cross-ratio lattice paths shows the subdivisions arising from these lattice paths. The maps that glue together the polytopes in a subdivision (maps like $g$ from construction \ref{construction:subdivisions_from_lattice_path}) are not mentioned in Figure \ref{Example_complete_lattice_path} since they are the obvious ones. However, the glueing maps that connect the polytopes in the subdivsion to the boundary of $\Delta_3$ are not unique since we labeled ends of tropical curves (we come back to this later). The grey polytopes are $1$-marked, that is $\lambda$ sits at these polytopes. Note that all subdivions fit the cross-ratio $\lambda$ for some choice of glueing the polytopes to the boundary.

The numbers in the rightmost column correspond to subdivisions shown on the left. Each of these numbers is a product, where the first factor is the multiplicity $\mult(\mathcal{S})$ of its associated subdivision $\mathcal{S}$. Note that $\omega (\sigma_\mathcal{S})=1$ for all subdivisions since there is only one way of resolving the $4$-valent vertex dual to each $1$-marked polytope according to some $\lambda'$ degenerating to $\lambda$. The second factor comes from different glueings of polytopes to the boundary of $\Delta_3$ and can easily be seen from an example, see Figure \ref{Example_complete_lattice_path_2}.

The total sum of the numbers in the right column is $40$, which is the number of unlabeled tropical curves satisfying the given point conditions and the cross-ratio constraint. Since the second factor of each product in the rightmost column equals the number of ways to label ends parallel to the vector $(1,1)\in\mathbb{R}^2$, we obtain the number of labeled tropical curves satisfying our given conditions by multiplying $40$ with $\left( 3!\right)^2$, which is $1440$ as we would expect considering example \ref{example:Kontsevich_CR}. Thus we checked that we are not missing any subdivisions.

\begin{figure}[H]
\centering
\def\svgwidth{380pt}
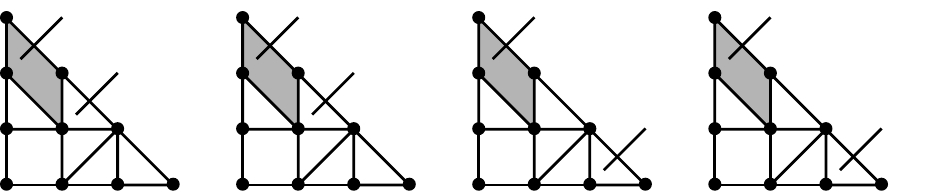
\caption{The subdivision in the right top corner of Figure \ref{Example_complete_lattice_path} and the $4$ different choices of labels of ends in $\lambda$ such that the subdivision still fits $\lambda$.}
\label{Example_complete_lattice_path_2}
\end{figure}

\end{example}

\section{Duality: tropical curves \& subdivisions} \label{section:duality:tropical_curves_subdivisions}

In this section we want to prove theorem \ref{theorem:same_numbers} that relates the numbers obtained from the cross-ratio lattice path algorithm to the enumerative numbers $N_{0,n}\left(\lambda'_1,\dots,\lambda'_l\right)$ of tropical curves satisfying point conditions and cross-ratio constraints. Moreover, it makes $N_{0,n}\left(\lambda'_1,\dots,\lambda'_l\right)$ computable using the cross-ratio lattice path algorithm. As a consequence the numbers $N^{\textrm{class}}_{0,n}\left( \mu_1,\dots,\mu_l \right)$ (we use notation \ref{notation}) become computable too.

\begin{definition}[Simple tropical curves]\label{definition:simple_curves}
An element $\left( \Gamma,x_1,\dots,x_n,h \right)$ in $\mathcal{M}_{0,n}\left(\mathbb{R}^2,\Delta \right)$ is called \textit{simple} if is satisfies:
\begin{itemize}
\item
the map $h$ that embeds $\Gamma$ in $\mathbb{R}^2$ is injective on vertices,
\item
if $h(v)\in h(e)$ for a vertex $v$ and an edge $e$, then there is an edge $e'$ adjacent to $v$ such that $h(e)$ and $h(e')$ intersect in infinitely many points and then there are a vertex $v'$ and finite sequences $(e_i)^r_i,(e'_j)^{r'}_j$ of edges (with $e_0=e,e'_0=e'$) that lie in $\operatorname{span}(e)$ such that two consecutive elements in a sequence meet in a vertex and such that $h(e_r)$ and $h(e_{r'})$ are adjacent to $h(v')$,
\item
assume $p\in\mathbb{R}^2$ is a point through which more than two edges pass. Divide these edges into equivalence classes depending on the slope of the line they are mapped to. Then there are at most two equivalence classes.
\end{itemize}
\end{definition}

\begin{theorem}\label{theorem:same_numbers}
For notation, see \ref{notation}. The number of rational tropical curves satisfying point and cross-ratio conditions (see definition \ref{definition:general_position_I}) equals the number obtained from the cross-ratio lattice path algorithm (see definition \ref{definition:number_lpa}) if the input data of the algorithm are the number of point conditions and the degenerated cross-ratios. More precisely, the equality
\begin{align*}
N_{0,n}\left(\lambda'_1,\dots,\lambda'_l\right)
=
N_{0,n}^{\textrm{lpa}}\left(\lambda_1,\dots,\lambda_l\right)
\end{align*}
holds.
\end{theorem}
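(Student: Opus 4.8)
The plan is to reduce, via Theorem~\ref{thm:ZSFSSG_section_2}, to proving the identity for the degenerated cross-ratios, i.e.\ $N_{0,n}\left(\lambda_1,\dots,\lambda_l\right)=N_{0,n}^{\textrm{lpa}}\left(\lambda_1,\dots,\lambda_l\right)$, and then to exhibit a multiplicity-preserving bijection between the tropical curves counted on the left and the pairs $(\mathcal{A},\mathcal{S})$ with $\mathcal{A}$ a cross-ratio lattice path and $\mathcal{S}\in\mathcal{S}_2(\mathcal{A})$ counted on the right. By Remark~\ref{Remark:N_(0,n)independent of positions/lengths} the number $N_{0,n}\left(\lambda_1,\dots,\lambda_l\right)$ does not depend on the position of the points, so I would place $p_1,\dots,p_n$ on a line of small negative irrational slope with rapidly growing mutual distances, exactly the configuration underlying Construction~\ref{construction:dual_tropical_curve}. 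A preliminary step is to observe that for such a configuration every curve $C=(\Gamma,x_1,\dots,x_N,h)$ contributing to the $0$-dimensional cycle $\prod_{j}\ft_{\lambda_j}^*(0)\cdot\prod_i\ev_i^*(p_i)\cdot\mathcal{M}_{0,n}\left(\mathbb{R}^2,\Delta(\Sigma)\right)$ may be assumed simple in the sense of Definition~\ref{definition:simple_curves}: this follows from general position as in Mikhalkin's original argument, since the non-simple loci have too small dimension to meet the constraints.

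Next I would invoke the classical duality between simple plane tropical curves and regular subdivisions of the Newton polytope $\Sigma$. For $C$ as above, $h(\Gamma)$ is dual to a subdivision $\mathcal{S}$ of $\Sigma$, and the special choice of the $p_i$ forces the cells of $\mathcal{S}$ dual to the vertices carrying the contracted ends $x_1,\dots,x_n$, together with the boundary edges dual to the ends prescribed by $\Delta(\Sigma)$, to assemble into a cross-ratio lattice path $\mathcal{A}$ with $n+z$ steps of which $\mathcal{S}$ is a lattice path subdivision; since $C$ has genus zero we get $\mathcal{S}\in\mathcal{S}_1(\mathcal{A})$. The weights of the edges of $C$ furnish the multiset labelings of the cells, and the contracted-end structure makes the relevant cells pointed segments or valid polytopes with $\tilde{P}$ fixed; Construction~\ref{construction:dual_tropical_curve} recovers $C=C_\mathcal{S}$ from $\mathcal{S}$, so this is a bijection once we restrict on the subdivision side to those $\mathcal{S}$ that moreover fit the cross-ratios.

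The crux is to match the cross-ratio conditions. By Definition~\ref{definition:cross-ratios_Ilya} and Definition~\ref{definition:general_position_II}, $C$ satisfies $\lambda_j$ with $|\lambda_j|=0$ iff $\ft_{\lambda_j}(C)$ is the $4$-valent point of $\mathcal{M}_{0,4}$; by Lemma~\ref{lemma:description_of_X} this is equivalent to $\val(v)=3+\#\lambda_v$ at every vertex $v$ of $C$, where $\lambda_v$ is the set of cross-ratios whose defining vertex is $v$. Dually a vertex $v$ with $\val(v)=3+k$ corresponds to a $k$-marked valid polytope or pointed segment $P_v$ of $\mathcal{S}$, and I claim $\Lambda(P_v)=\lambda_v$ in the notation of Definition~\ref{definition:CR_fit}: if $\lambda_j=\{\beta_{j_1},\dots,\beta_{j_4}\}\in\lambda_v$, then each $\beta_{j_i}$ is separated from the others at $v$, i.e.\ reached from $v$ along a distinct edge, and the reachability procedure of Definition~\ref{definition:CR_fit} traces precisely the shortest path from $v$ to $\beta_{j_i}$, so $\beta_{j_i}\in\Lambda(P_v,k_i)$ with pairwise distinct $k_i$, whence $\lambda_j\in\Lambda(P_v)$; conversely, Lemma~\ref{lemma:nice_property} guarantees that, because $\omega(\sigma_C)\neq 0$, every edge at $v$ lies on a shortest path from $v$ to some label of some $\lambda_j\in\lambda_v$, which forbids any cross-ratio outside $\lambda_v$ from being counted by $\Lambda(P_v)$. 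Summing over all vertices gives $\sum_{P}\#\Lambda(P)=\sum_v\#\lambda_v=l$, so $\mathcal{S}$ fits the cross-ratios and $\mathcal{S}\in\mathcal{S}_2(\mathcal{A})$; conversely for $\mathcal{S}\in\mathcal{S}_2(\mathcal{A})$ the curve $C_\mathcal{S}$ lies in $X=\prod_j\ft_{\lambda_j}^*(0)\cdot\mathcal{M}_{0,n}\left(\mathbb{R}^2,\Delta(\Sigma)\right)$, as recorded before the theorem, so the correspondence is onto. This fitting match, together with the reduction to simple curves, is the step I expect to be the main obstacle.

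Finally I would check that the bijection preserves multiplicities. For $C$ contributing to the left-hand count, Proposition~\ref{prop:mult_X}, Lemma~\ref{lemma:local_multiplicities} and Corollary~\ref{corollary:resolution_weights_local} give $\mult(C)=\bigl(\prod_v\mult_{\ev}(v)\bigr)\cdot\bigl(\prod_v\omega_v(\sigma_C)\bigr)$, while $\mult(\mathcal{S})=\bigl(\prod_P\mult_{\ev}(P)\bigr)\cdot\omega(\sigma_\mathcal{S})$; since $\sigma_\mathcal{S}$ is by construction the top-dimensional cell of $X$ containing $C_\mathcal{S}=C$, the weight factors agree by Corollary~\ref{corollary:resolution_weights_local}. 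It thus remains to verify $\mult_{\ev}(v)=\mult_{\ev}(P_v)$ vertex by vertex: here one checks that the fixed/free dichotomy for the components of $v$ (Definition~\ref{definition:fixed_free_components}) is exactly the dichotomy $\mathcal{E}_i-1=\mathcal{C}_i$ (fixed) versus $\mathcal{E}_i-2=\mathcal{C}_i$ (free) built into the definition of $\mult_{\ev}(P_v)$, by the standard dimension count for rigidity of a rational tropical curve-component subject to point and cross-ratio conditions, so that the two fixed-component directions of $v$ are the dual edges $\tau_{i_0}v_0$ and $\tau_{i_1}v_1$ of $\tilde{P}_v$ and $|\det(v_1,v_2)|=|\det(\tau_{i_0}v_0,\tau_{i_1}v_1)|$. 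Multiplying over all vertices yields $\mult_{\ev}(C)=\mult_{\ev}(\mathcal{S})$, hence $\mult(C)=\mult(\mathcal{S})=\mult(C_\mathcal{S})$. Summing over all contributing $C$, which under the bijection are exactly the pairs $(\mathcal{A},\mathcal{S})$ with $\mathcal{S}\in\mathcal{S}_2(\mathcal{A})$, gives $N_{0,n}\left(\lambda_1,\dots,\lambda_l\right)=N_{0,n}^{\textrm{lpa}}\left(\lambda_1,\dots,\lambda_l\right)$, and combining with Theorem~\ref{thm:ZSFSSG_section_2} proves the claim.
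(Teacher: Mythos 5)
Your proposal is correct and follows essentially the same route as the paper: reduce via Theorem \ref{thm:ZSFSSG_section_2} to the degenerated conditions, choose a stretched point configuration so that the contributing curves are simple, and establish a multiplicity-preserving bijection between these curves and lattice path subdivisions via the standard duality. In fact you spell out two steps the paper treats rather tersely --- the verification that $\Lambda(P_v)=\lambda_v$ (so that $\mathcal{S}_C$ fits the cross-ratios, via Lemma \ref{lemma:nice_property}) and the vertex-by-vertex matching $\mult_{\ev}(v)=\mult_{\ev}(P_v)$ --- in more detail than the published argument does.
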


\begin{proof}
Using theorem \ref{thm:ZSFSSG_section_2}, we deduce that $N_{0,n}\left(\lambda'_1,\dots,\lambda'_l\right)$ equals the number of tropical curves satisfying the degenerated cross-ratio conditions $\lambda_1,\dots,\lambda_l$.

Let $\mathcal{S}_{0,n}\left( \lambda_1,\dots,\lambda_l\right)$ denote the set of elements that contribute to $N_{0,n}^{\textrm{lpa}}\left(\lambda_1,\dots,\lambda_l\right)$
As before, we pick points $p_1,\dots,p_n$ in general position linearly ordered on a line with a small negative slope such that distances grow ($|p_{i}-p_{i-1}| <<|p_{i+1}-p_i|$), and we let $\mathcal{R}_{0,n}\left(\lambda_1,\dots,\lambda_l\right)$ denote the set of degenerated tropical curves satisfying degenerated cross-ratio constraints, that is $\mathcal{R}_{0,n}\left(\lambda_1,\dots,\lambda_l\right)$ denotes the set of elements that contribute to $N_{0,n}\left(\lambda_1,\dots,\lambda_l\right)$. Consider the map
\begin{align*}
\phi:\mathcal{S}_{0,n}\left( \lambda_1,\dots,\lambda_l\right)
&\to
\mathcal{R}_{0,n}\left(\lambda_1,\dots,\lambda_l\right)\\
\mathcal{S}&\mapsto C_\mathcal{S}
\end{align*}
that maps a lattice path subdivision $\mathcal{S}$ to its dual tropical curve $C_\mathcal{S}$ given by construction \ref{construction:dual_tropical_curve}. This map is obviously well-defined because we only have subdivisions where all polytopes are fixed and the map is injective because curves with different combinatorial types are different. To see that $\phi$ is surjective, we need to construct a preimage for a given curve $C=(\Gamma,x_1,\dots,x_n,h)$ in $\mathcal{R}_{0,n}\left(\lambda_1,\dots,\lambda_l\right)$. Note that $C$ carries two different graph structures, namely one induced by $\Gamma$ and one induced by $h(\Gamma)$ in the canonical way. If we refer to a vertex in $h(\Gamma)$, we mean the graph structure induced by $h$ and if we refer to a vertex in $\Gamma$, we mean the graph structure of $\Gamma$.

First of all, associate a valid polytope (resp. a pointed segment) to every vertex $v\in h(\Gamma)$: Let $v$ be a vertex of $h(\Gamma)$ and consider its dual polytope $P_v$. The polytope $P_v$ can be turned into a labeled polyotpe (resp. a pointed segment) if we label its edges $E_i$ with weights of its dual edges $e_{i_1},\dots,e_{i_m}\in \Gamma$. Moreover, denote by $\tilde{P}_v$ the dual polytope of $v\in \Gamma$ and label its edges as before. Note that $P_v$ is a Minkowski sum of $\tilde{P}_v$ and segments $S_1,\dots, S_r$ that correspond to edges of $v\in h(\Gamma)$ that are no edges of $v\in \Gamma$. We can choose the points $p_1,\dots,p_n$ in such a way that $C$ is a simple tropical curve. Then, edges of $v\in h(\Gamma)$ that are no edges of $v\in \Gamma$ can only be parallel to edges of $v\in \Gamma$. Furthermore, if $\tilde{P}$ is $0$-dimensional, then there are two segments $S_{i_1},S_{i_2}\in\lbrace S_1,\dots, S_r\rbrace$ such that all other Minkowski summands of $P$ are parallel to one of them. Note also that there are mappings of entries of labeled edges of $P_v$ to its Minkowski summands. In addition $P_v$ is unique because permuting parallel edges of $v\in h(\Gamma)$ leads to the same dual polytope. In this way, we can assign a valid polytope (resp.\ pointed segment) to every vertex $v\in h(\Gamma)$.

The second step is to associate a subdivision $\mathcal{S}_C\in \mathcal{S}_{0,n}\left( \lambda_1,\dots,\lambda_l\right)$ to $C$: The tropical curve $h(\Gamma)$ determines how to glue the polytopes $P_v$ (via maps called $g$ in construction \ref{construction:subdivisions_from_lattice_path}) for all vertices $v\in h(\Gamma)$ together. Note that if two vertices $v,v'\in h(\Gamma)$ are adjacent, then their dual valid polytopes $P_v,P_{v'}$ are compatible. Denote the subdivision obtained this way by $\mathcal{S}_C$. The dual polytopes resp.\ segments associated to the vertices and edges of $h(\Gamma)$  meeting the points $p_1,\dots,p_n$ and non-pointed segment we associate in the obvious way to the edges of $C$ intersecting the line the points $p_1,\dots,p_n$ lie on form a cross-ratio lattice path $\mathcal{A}$. Hence $\mathcal{S}_C$ is a lattice path subdivision whose dual tropical curve is $C$, the genus of $C$ is zero, all polytopes of $\mathcal{S}_C$ are fixed and $\mathcal{S}_C$ fits to the given cross-ratios by definition. Therefore $\mathcal{S}_C\in\mathcal{S}_2(\mathcal{A})$ for some cross-ratio lattice path $\mathcal{A}$.
Thus $\phi$ is bijective and preserves weights.
\end{proof}

Now that we established theorem \ref{theorem:same_numbers}, we can apply corollary \ref{corollary:ZSFSSG_section} and in particular the correspondence theorem shown by Tyomkin in \cite{IlyaCRC} such that the next corollary follows immediately.

\begin{corollary}
We use the notation from \ref{notation}. Under the same assumptions as in theorem \ref{thm:correspondence_thm_CRC} the equality
\begin{align*}
N^{\textrm{class}}_{0,n}\left( \mu_1,\dots,\mu_l \right)
=
N_{0,n}^{\textrm{lpa}}\left(\lambda_1,\dots,\lambda_l\right)
\end{align*}
holds.
\end{corollary}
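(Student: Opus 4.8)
The plan is to read the identity off as a composition of equalities that are all already in place, so the argument is essentially bookkeeping. First I would invoke Corollary~\ref{corollary:ZSFSSG_section}: under the hypotheses of the correspondence theorem~\ref{thm:correspondence_thm_CRC}, writing $\lambda_1,\dots,\lambda_l$ for the degenerations of the tropicalizations $\lambda'_1,\dots,\lambda'_l$ of the classical cross-ratios $\mu_1,\dots,\mu_l$, it already gives
\[
N^{\textrm{class}}_{0,n}\left(\mu_1,\dots,\mu_l\right) = N_{0,n}\left(\lambda_1,\dots,\lambda_l\right).
\]
This step packages together Tyomkin's correspondence theorem and the degeneration invariance of the tropical count supplied by Theorem~\ref{thm:ZSFSSG_section_2}.

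Next I would combine two further known equalities. Theorem~\ref{thm:ZSFSSG_section_2}, used once more, says $N_{0,n}(\lambda_1,\dots,\lambda_l) = N_{0,n}(\lambda'_1,\dots,\lambda'_l)$, and Theorem~\ref{theorem:same_numbers} says $N_{0,n}(\lambda'_1,\dots,\lambda'_l) = N_{0,n}^{\textrm{lpa}}(\lambda_1,\dots,\lambda_l)$. Splicing everything along the chain
\[
N^{\textrm{class}}_{0,n}(\mu_1,\dots,\mu_l) = N_{0,n}(\lambda_1,\dots,\lambda_l) = N_{0,n}(\lambda'_1,\dots,\lambda'_l) = N_{0,n}^{\textrm{lpa}}(\lambda_1,\dots,\lambda_l)
\]
yields the claim. (Equivalently one can apply Theorem~\ref{theorem:same_numbers} first and Corollary~\ref{corollary:ZSFSSG_section} afterwards, arriving at the same place.)

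Since all the mathematical substance lives in the quoted results, there is no genuine obstacle. The only thing I would actually verify is that the hypotheses line up simultaneously: the general-position conventions and the dimension constraint $n+l=|\Delta|-1$ from Notation~\ref{notation}, the compatibility of classical general position with its tropicalization (Remark~\ref{remark:tropicalized_cross-ratios} together with Definitions~\ref{definition:general_position_I} and~\ref{definition:general_position_II}), and the fact that by Remark~\ref{Remark:N_(0,n)independent of positions/lengths} one may freely take the points collinear with rapidly growing distances — as the lattice path algorithm requires — without changing any of the numbers $N^{\textrm{class}}_{0,n}$, $N_{0,n}$, or $N_{0,n}^{\textrm{lpa}}$. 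All of this holds by construction, so the corollary follows.
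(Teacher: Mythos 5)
Your proposal is correct and matches the paper's argument: the paper likewise obtains the corollary immediately by chaining Theorem \ref{theorem:same_numbers} with Corollary \ref{corollary:ZSFSSG_section} (equivalently, the correspondence theorem \ref{thm:correspondence_thm_CRC} plus the degeneration invariance of Theorem \ref{thm:ZSFSSG_section_2}). Your attention to the compatibility of the general-position and point-configuration hypotheses is a reasonable extra check, but the substance is the same.
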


\section{Floor diagrams for cross-ratio counts}\label{section:special_case:floor_diagrams_cross-ratios}

In this section, we want to impose some restrictions on the degree $\Delta$ and the cross-ratios such that we can work with simple combinatorial objects called floor diagrams. Let $\Sigma_d$ be the convex hull of $\lbrace (0,0),(d,0),(d,0)\rbrace\in\mathbb{R}^2$ for some $d\in\mathbb{N}_{>0}$ and $\Delta_d:=\Delta\left(\Sigma_d\right)$ (see definition \ref{definition:degree}).

\begin{definition}[Cross-ratio floor diagrams]
Let $d\in\mathbb{N}_{>0}$ and let $\mathcal{F}$ be a tree on a totally ordered set of vertices $v_1,\dots,v_n$, then $\mathcal{F}$ is called a \textit{cross-ratio floor diagram of degree $\Delta_d$} if:
\begin{itemize}
\item[(1)] Each edge of $\mathcal{F}$ consists of two half-edges. There are two types of half-edges, \textit{thin} and \textit{thick} ones. A thin half-edge can only be completed to an edge with a thick half-edge and vice versa.
\item[(2)]
Each vertex $v$ is labeled with $s_v,|\lambda_v|\in\mathbb{N}$ and a set $\delta_v$ of labels that appear in $\Delta_d$, where $|\lambda_v|$ is called the \textit{number of cross-ratios of $v$} and $s_v$ is called the \textit{size of $v$} such that 
\begin{align*}
s_v=\lbrace x\in\delta_v\mid d+1\leq x\leq 2d \rbrace = \lbrace x\in\delta_v\mid 2d+1\leq x\leq 3d \rbrace
\end{align*}
and $\emptyset=\delta_v\cap\delta_{v'}$ for all $v\neq v'$ and $\bigcup_v \delta_v$  is the set of all labels appearing in $\Delta_d$.
\item[(3)]
The number of thick edges adjacent to a vertex $v$ is $2-2s_v+|\lambda_v|$.
\item[(4)]
The total ordering on the vertices induces directions on the edges in the following way: we order the vertices on a line starting with the smallest vertex $v_1$ on the left and direct the edges from smaller to larger vertices. Each edge $e$ of the graph is equipped with a \textit{weight} $\omega(e)\in\mathbb{N}$ such that the \textit{balancing condition}
\begin{align*}
s_v-\left(\#\delta_v-2s_v \right)+\sum\pm \omega(e)=0
\end{align*}
holds for all vertices $v$, where the sign is $+$ for outgoing edges and $-$ for incoming edges of $v$.
\end{itemize}
\end{definition}

\begin{definition}\label{definition:floor_diag_satisfies_CRC}
Let $\lambda=\lbrace \beta_1,\dots,\beta_4 \rbrace$ be a degenerated cross-ratio on $\mathcal{M}_{0,n}\left(\mathbb{R}^2,\Delta_d \right)$. Let $\mathcal{F}$ be a floor diagram of degree $\Delta_d$. Each element $\beta_i$ of $\lambda$ is associated to a vertex of $\mathcal{F}$ the following way: 
\begin{itemize}
\item[(1)]
If $\beta_i$ is the label $t\in\lbrace 1,\dots,3d\rbrace$ of an end, then $\beta
_i$ is associated to the unique vertex $v\in\mathcal{F}$ such that $t\in\delta_v$.
\item[(2)]
If $\beta_i$ is a point $x_j\in\lbrace x_1,\dots,x_n\rbrace$, then $\beta_i$ is associated to $v_j$.
\end{itemize}
Hence a pair $\left( \beta_i,\beta_j\right)$ induces a unique path in $\mathcal{F}$. If the paths associated to $\left( \beta_{i_1},\beta_{i_2} \right)$ and $\left( \beta_{i_3},\beta_{i_4} \right)$ intersect in exactly one vertex $v$ of $\mathcal{F}$ for all pairwise different choices of $i_1,\dots,i_4$ such that $\lbrace i_1,\dots,i_4 \rbrace =\lbrace 1,\dots,4 \rbrace$, then the cross-ratio $\lambda$ is \textit{satisfied at $v$}. A cross-ratio floor diagram \textit{satisfies} the degenerated cross-ratios $\lambda_1,\dots,\lambda_l$ if for each cross-ratio there is a vertex of $\mathcal{F}$ satisfying it and $|\lambda_v|$ is exactly the total number of cross-ratios that are satisfied at a vertex $v$ for each vertex.
\end{definition}

\begin{remark}
Note that the condition `all choices of $i_1,\dots,i_4$ lead to exactly one vertex in the intersection of the paths' is equivalent to `one choice of $i_1,\dots,i_4$ leads to exactly one vertex in the intersection of the paths'. This makes it easier to check if $\mathcal{F}$ satisfies a cross-ratio.
\end{remark}

\begin{example}\label{Example:cross_ratio_floor_diagram}
The figure below shows a cross-ratio floor diagram, where all weights on the edges are $1$ and where thick edges are drawn thick. Note that we have $d=3$ and this cross-ratio floor diagram satisfies the cross-ratio $\lambda=\lbrace x_1x_4x_5x_6 \rbrace$.

\begin{figure}[H]
\centering\hspace*{1.5cm}
\def\svgwidth{300pt}
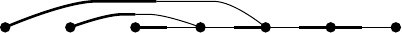
\label{Example_cross_ratio_floor_diagram}

\setlength{\tabcolsep}{1.5em}
\renewcommand{\arraystretch}{1.3} 
\begin{table}[H]
  \centering
      \centering
      \begin{tabular}{*{8}{|c}|}\hline
        $i$ & $1$&$2$&$3$&$4$&$5$&$6$&$7$ \\\hline
        $s_{v_i}$ & $0$ & $0$ & $0$ & $1$ & $1$& $0$& $1$ \\\hline
        $|\lambda_{v_i}|$& $0$ & $0$ & $0$ & $0$ & $1$& $0$& $0$ \\\hline
        $\delta_{v_i}$ & $\lbrace 1 \rbrace$ & $\lbrace 2 \rbrace$ & $\lbrace 3 \rbrace$ & $\lbrace 4,9 \rbrace$ & $\lbrace 5,8 \rbrace$ & $\emptyset$ & $\lbrace 6,7 \rbrace$ \\\hline
      \end{tabular}
\end{table}
\end{figure}
\end{example}

\begin{definition}[$i$-th piece of $\mathcal{F}$]\label{definition:piece_of_floor_diag}
Let $\mathcal{F}$ be a cross-ratio floor diagram of degree $\Delta_d$ on the ordered set of vertices $v_1,\dots,v_n$ corresponding to given point conditions $p_1,\dots,p_n$ such that $\mathcal{F} $satisfies the degenerated cross-ratios $\lambda_1,\dots,\lambda_l$. The \textit{ $i$-th piece} $\left(\mathcal{F}_i,\delta_{v_i},s_{v_i},|\lambda_{v_i}|,\tilde{\lambda}_{i_1},\dots,\tilde{\lambda}_{i_{|\lambda_{v_i}|}}\right)$ (for $i=1,\dots,n$) of $\mathcal{F}$ is obtained from $\mathcal{F}$ in the following way: Cut all edges that connect the vertex $v_i$ to other vertices of $\mathcal{F}$ into (thick or thin) half-edges, and call the connected component containing $v_i$ now $\mathcal{F}_i$, equip the cut edges with the labels indicating the vertices that they used to be connected to.
Moreover, we want to adapt the cross-ratios that are satisfied at $v_i$: If $\lambda=\lbrace \beta_1,\dots,\beta_4 \rbrace$ is a degenerated cross-ratio which is satisfied at $v_i$, the paths associated to $\lambda$ in $\mathcal{F}$ (see definition \ref{definition:floor_diag_satisfies_CRC}) might have been cut by cutting the edges connecting $v_i$ to the rest of $\mathcal{F}$. Let $\beta_j\in\lambda$ be such that the path from the vertex associated to $\beta_j$ to $v_i$ is cut. Replace $\beta_j$ by the label of the edge in the path that has been cut and denote the cross-ratio obtained that way by $\tilde{\lambda}$. We shorten the notation to $\mathcal{F}_i$ if the additional data $\left(\mathcal{F}_i,\delta_{v_i},s_{v_i},|\lambda_{v_i}|,\tilde{\lambda}_{i_1},\dots,\tilde{\lambda}_{i_{|\lambda_{v_i}|}}\right)$ is obvious from the context.
\end{definition}

\begin{definition}[Multiplicities of cross-ratio floor diagrams]\label{definition:multiplicities_cross-ratio_floor_diagrams}
Let $\mathcal{F}$ be a cross-ratio floor diagram of degree $\Delta_d$ on the ordered set of vertices $v_1,\dots,v_n$ that satisfies the degenerated cross-ratios $\lambda_1,\dots,\lambda_l$ and let $p_1,\dots,p_n$ be points in a stretched configuration. Let $\mathcal{F}_i$ be a piece of a floor diagram $\mathcal{F}$ like above. The weighted incoming edges of $\mathcal{F}_i$ induce a partition $\alpha$ of the sum of all weights of incoming edges of $\mathcal{F}_i$ in a natural way and the weighted outgoing edges induce a partition $\beta$, respectively. Let $\kappa$ be the set of labels of thin edges adjacent to $v_i\in\mathcal{F}_i$. The \textit{multiplicity} of the piece $\mathcal{F}_i$ is defined as
\begin{align*}
\mult(\mathcal{F}_i):=\degree \left( \ev^*_i(p_i) \cdot \prod_{k\in\kappa}\partial\ev_k^*\left( y_k \right)\cdot\prod_{j=1}^{|\lambda_{v_i}|}\ft_{\tilde{\lambda}_{i_j}}^*\left( 0\right) \cdot \mathcal{M}_{0,n}\left(\mathbb{R}^2,\Delta\left(\alpha,\beta\right) \right)\right),
\end{align*}
where $\degree$ is the degree of a cycle and $p_i,\lambda_{i_1},\dots,\lambda_{i_{|\lambda_{v_i}|}},\lbrace y_k\mid k\in\kappa\rbrace$ are in general position (cf.\ lemma \ref{lemma:ev_horizontal_ends}). The \textit{multiplicity} of $\mathcal{F}$ is defined as
\begin{align*}
\mult(\mathcal{F}):=\prod_{i=1}^n \mult(\mathcal{F}_i).
\end{align*}
\end{definition}

\begin{definition}[Floors and elevators]
An \textit{elevator} of a tropical curve of degree $\Delta_d$ is an edge that is parallel to $(-1,0)\in\Delta_d$. A connected component of a tropical curve that remains if the interiors of the elevators are removed is called \textit{floor of size $s$} if there are exactly $s$ ends that are in this connected component and that are parallel to $(1,1)\in\Delta_d$. The case $s=0$ is possible for floors consisting of a single contracted marked point. A tropical curve that is fixed by points and cross-ratios is called \textit{floor decomposed} if each point lies on its own floor.
\end{definition}

\begin{definition}\label{definition:4_points_CR}
A cross-ratio $\left(\beta_1\beta_2|\beta_3\beta_4\right)$ is said to have \textit{$t$ points} if the number of $\beta_i$ that are points is $t$. A set of cross-ratios $\lambda_1,\dots,\lambda_l$ has \textit{$t$ points} if each cross-ratio in the set does.
\end{definition}

\begin{lemma}\label{lemma:floor_decomposed_4_points}
A tropical curve $C$ of degree $\Delta_d$ that is fixed by general positioned point conditions $p_1,\dots,p_n$ and degenerated cross-ratio constraints $\lambda_1,\dots,\lambda_l$ that have $4$ points such that the $y$-coordinates of the points $p_1,\dots,p_n$ are contained in a small interval while the $x$-coordinates have large distances is floor decomposed.
\end{lemma}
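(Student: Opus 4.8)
The plan is to reduce to the classical statement that a plane tropical curve meeting a sufficiently stretched configuration of points is floor decomposed (the floor-decomposition technique of Brugall\'e and Mikhalkin underlying \cite{MikhalkinBrugalleFloorDiagIntroduction}) and then to check that the cross-ratio conditions, being required to have four points, do not interfere. The one geometric input I would isolate first is a slope bound: every edge of a tropical curve of degree $\Delta_d$ is dual to a lattice segment contained in the triangle $\Sigma_d=\operatorname{conv}\{(0,0),(d,0),(0,d)\}$, so its direction vector $(v_x,v_y)$ satisfies $|v_x|\le d\,|v_y|$ whenever $v_y\neq0$, with $v_y=0$ exactly for the horizontal edges, i.e.\ the elevators. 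Consequently, if $F$ is any floor of $C$ — a floor has, by definition, no horizontal edge — then along any path inside $F$ between two of its vertices the total variation of the $y$-coordinate is at least $\tfrac1d$ times the total variation of the $x$-coordinate.

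Next I would set up the rigidity bookkeeping. By Definition~\ref{definition:general_position_I} and Theorem~\ref{thm:ZSFSSG_section_2} the hypotheses force $n+l=|\Delta_d|-1$ and $C$ to lie in the interior of a top-dimensional cell of $X=\prod_{j=1}^l\ft_{\lambda_j}^*(0)\cdot\mathcal M_{0,n}(\mathbb R^2,\Delta_d)$; in particular $C$ has only finitely many possible combinatorial types, and within each fixed type the positions of all vertices of $C$ are affine-linear functions of the chosen points $p_1,\dots,p_n$ (linear on the relevant chamber of the configuration space). Choose the points with $y$-coordinates in a fixed small interval $[0,\delta]$ and $x$-coordinates ordered $x_1<\dots<x_n$, and suppose for contradiction that some floor $F$ contained two of them, say $p_i$ and $p_j$ with $x_i<x_j$. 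The unique path in $F$ (a subtree of the tree $C$) joining the vertex over $p_i$ to the vertex over $p_j$ consists of bounded non-horizontal edges, so by the slope bound the total $y$-variation along it is at least $(x_j-x_i)/d$; since its endpoints sit at height in $[0,\delta]$, the path must rise to height at least $(x_j-x_i)/(2d)-\delta$ and descend again, so $C$ has a vertex at that height. Running through the finitely many combinatorial types, this requirement translates — via the affine-linear description of vertex positions and the positivity of edge lengths — into one of a finite list of affine-linear forms in the $p$'s being forced to vanish or change sign once the gap $x_j-x_i$ exceeds a bound depending only on $d$ and $n$; taking the $x_i$ with gaps larger than all of these bounds excludes every such type. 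Hence no floor of $C$ contains two points, which is exactly the assertion that $C$ is floor decomposed.

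Finally I would explain why the cross-ratios cause no trouble. Since every $\lambda_j$ has four points it involves only the contracted ends of $C$; in the degenerated model it merely forces one vertex of $C$ to be $4$-valent with respect to $\ft_{\lambda_j}$ (it is ``satisfied at a vertex'' in the sense of Definition~\ref{definition:floor_diag_satisfies_CRC}), a vertex that then lies on whichever floor contains it. Such a constraint introduces no horizontal edge, does not merge two floors, and only restricts the finite list of combinatorial types already used above, so the argument of the previous paragraph applies verbatim. I expect the main obstacle to be precisely the uniform ``sufficiently stretched'' step: showing that the amount of spreading of the $x_i$ needed to rule out every non-floor-decomposed behaviour is bounded using only the finiteness of combinatorial types (so that it depends on $d$ and $n$ alone), which is the content of the Brugall\'e–Mikhalkin argument adapted to the present degenerated cross-ratio setting.
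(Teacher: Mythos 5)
Your steps (a) and (b) are fine: the slope bound $|v_x|\le d\,|v_y|$ for non-horizontal edges of a curve of degree $\Delta_d$ is correct, and it does show that a floor containing two points with a large $x$-gap must have a vertex far outside the horizontal stripe containing $p_1,\dots,p_n$. The genuine gap is the next step: you never actually prove that such a vertex is impossible. The assertion that, type by type, ``one of a finite list of affine-linear forms in the $p$'s is forced to vanish or change sign once the gap exceeds a bound'' is not an argument --- affine-linearity of vertex positions in the $p_i$ tells you the vertex moves, not that some edge length must become negative --- and you concede at the end that this uniform stretching step is ``the main obstacle.'' That obstacle is precisely the content of the lemma. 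The paper closes it with a \emph{string} argument rather than a finiteness argument: take a vertex $v$ of maximal $y$-coordinate above the stripe; if $v$ is $3$-valent, the balancing condition and the shape of $\Delta_d$ produce a path between two non-contracted ends containing no marked point (a string), which moves in a $1$-parameter family and contradicts $C$ being fixed; if $\val(v)>3$, lemma \ref{lemma:nice_property} forces every edge at $v$ --- in particular the upward one guaranteed by balancing --- to lie on a path to some $\beta_i$ occurring in a cross-ratio, and since every cross-ratio has $4$ points each $\beta_i$ is a contracted end, i.e.\ a marked point, none of which lies above $v$. Once no vertex leaves the stripe, corollary 5.4 of \cite{MikhalkinBrugalle} gives floor decomposition.

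This also shows that your last paragraph locates the $4$-points hypothesis in the wrong place. It is not a harmless bookkeeping condition ensuring the cross-ratios ``do not interfere'': it is exactly what kills the higher-valent case of the string argument. A degenerated cross-ratio involving non-contracted ends can rigidify a path between two ends of the curve (so the string argument fails) and thereby pin a vertex arbitrarily far from the point configuration, no matter how stretched the $p_i$ are; your finiteness sketch, which does not visibly use the hypothesis in the crucial step, would therefore ``prove'' a statement that is false without it. To repair your proof you would need to supply the string/rigidity argument (or an equivalent) at the point where you currently invoke the unspecified list of affine-linear forms, and that argument is where the $4$-points assumption must be used via lemma \ref{lemma:nice_property}.
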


\begin{proof}
A \textit{string} is a path in a tropical curve connecting two non-contracted ends such that no point lies on that path. A string gives rise to a $1$-dimensional family of tropical curves.
Let $I\subset \mathbb{R}$ be a compact interval such that $p_1,\dots,p_n$ lie in the stripe $\mathbb{R}\times I$ of $\mathbb{R}^2$. Assume there is a vertex $v$ of $C$ whose $y$-coordinate (among all vertices of $C$) is (without loss of generality) maximal and $v$ lies above the stripe. There are two cases.
\begin{itemize}
\item[(1)]
Assume $v$ has valency greater $3$, that is there are cross-ratios such that $\val(v)=3+\#\lambda_v$ (see definition \ref{definition:lambda_v_vertex_trop_curve}). By the balancing condition there is an edge adjacent to $v$ whose direction vector has $y$-coordinate greater zero. But this edge cannot lead to a point since all points lie beneath $v$ and $v$ has maximal $y$-coordinate. This contradicts lemma \ref{lemma:nice_property} since all cross-ratios have $4$ points.
\item[(2)]
Assume $v$ is $3$-valent. We follow the proof of proposition 5.3 of \cite{MikhalkinBrugalle}: Since the $y$-coordinate of $v$ is maximal there is an edge $e_1$ that is an end with direction vector $u_1$ adjacent to $v$. The given degree $\Delta_d$ guarantees that $u_1=(\alpha,1)$ for some $\alpha$. Denote the two other direction vectors by $u_2,u_3$. Using the balancing condition, we can (without loss of generality) write $u_2=(\gamma,\beta)$ and $u_3=(\epsilon,\delta)$ for some integers $\beta\geq 0,\delta<0$. Note that the edge $e_2$ associated to $u_2$ is an end if $\beta>0$ and this leads to a string from $e_1$ to $e_2$ which is a contradiction. Therefore $\beta=0$ and $e_2$ is no end. Let $v'$ be the vertex to which $v$ is connected to via $e_2$. By case (1) $v'$ is also $3$-valent, and $v'$ is (by the balancing condition) adjacent to an end denoted by $e_1'$. Thus there is a string from $e_1$ to $e_1'$ which is a contradiction.
\end{itemize}

Since no vertex of $C$ lies outside the stripe $\mathbb{R}\times I$, corollary 5.4 of \cite{MikhalkinBrugalle} can be applied, which yields that $C$ is floor decomposed.
\end{proof}

Assume in the following that all cross-ratios have $4$ points.

\begin{construction}[Floor decomposed curve $\mapsto$ cross-ratio floor diagram]\label{construction:floor_decomposed_curve_to_CR_floor_diag}
Let $\Delta_d$ be a degree, let $p_1,\dots,p_n,\lambda_1,\dots,\lambda_l$ be in general position, where $p_1,\dots,p_n\in\mathbb{R}^2$ are points in a stretched configuration, $\lambda_1,\dots,\lambda_l$ are degenerated cross-ratios with  $4$ points such that $3d-1=n+l$ holds. Curves satisfying these conditions are floor decomposed by lemma \ref{lemma:floor_decomposed_4_points}. We obtain a cross-ratio floor diagram $\mathcal{F}_C$ the following way: Cut all elevators of $C$, that is cut all edges parallel to $(1,0)\in\mathbb{R}^2$ such that each remaining component contains exactly one point. Shrinking these components to points $v_i$ we get the vertices of $\mathcal{F}_C$. We connect  $v_i,v_j\in\mathcal{F}_C$ if and only if the components obtained from $p_i,p_j$ are connected by an elevator. Distribute the conditions $\lambda_1,\dots,\lambda_l$ to the components analogous to definition \ref{definition:piece_of_floor_diag}.
We draw half-edges thin if they lead to a fixed component, and thick if they lead to a free component (see definition \ref{definition:fixed_free_components}).
We set
\begin{align*}
|\lambda_{v_i}|:=\sum_u \#\lambda_u,
\end{align*}
where the sum runs over all vertices $u$ in the component  of $p_i$ where $\lambda_u$ is introduced in definition \ref{definition:lambda_v_vertex_trop_curve}, $s_{v_i}$ is the size of the component associated to $p_i$ and $\delta_{v_i}$ is the set of labels of ends in $\Delta_d$ that are adjacent to the component associated to $p_i$ by cutting. Finally, the balancing condition of $C$ turns $\mathcal{F}_C$ into a cross-ratio floor diagram.
\end{construction}

\begin{example}
In order to illustrate construction \ref{construction:floor_decomposed_curve_to_CR_floor_diag}, a tropical curve (see Figure \ref{Example_construction_cross_ratio_floor_diagram2}) of degree $d=3$ through points $p_1,\dots,p_7$ in a stretched configuration satisfying the cross-ratio $\lambda=\lbrace x_1x_4x_5x_6\rbrace$ is given such that this curve is by construction \ref{construction:floor_decomposed_curve_to_CR_floor_diag} associated to the cross-ratio floor diagram of example \ref{Example:cross_ratio_floor_diagram}. The floors of the curve are indicated by dotted lines.
\end{example}

\begin{figure}
\centering
\def\svgwidth{300pt}
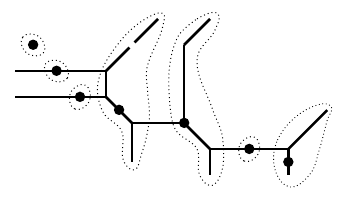
\caption{A floor decomposed curve.}
\label{Example_construction_cross_ratio_floor_diagram2}
\end{figure}

\begin{definition}\label{definition:number_floor}
Let $d\in\mathbb{R}_{>0}$ and let $\lambda_1,\dots,\lambda_l$ be general positioned degenerated cross-ratios with $4$ points. We define
\begin{align*}
N_{0,n}^{\textrm{floor}}\left(\lambda_1,\dots,\lambda_l\right)
:=\sum_{\mathcal{F}} \mult(\mathcal{F}),
\end{align*}
where the sum runs over all cross-ratio floor diagrams of degree $\Delta_d$ on an ordered set of vertices $v_1,\dots,v_n$ that satisfy $\lambda_1,\dots,\lambda_l$.
\end{definition}

\begin{lemma}\label{lemma:vertex_only_thick_edges}
Let $G$ be a tree such that each edge of $G$ consists of two half-edges and there are two types of half-edges, thin and thick ones. A thin half-edge can only be completed to an edge with a thick half-edge and vice versa. There is a vertex of $G$ that is only adjacent to thick half-edges.
\end{lemma}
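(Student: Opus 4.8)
The plan is to recast the statement as the existence of a \emph{sink} in a natural orientation of $G$. By hypothesis every edge of $G$ is assembled from exactly one thin and one thick half-edge, so I will orient each edge of $G$ by pointing it toward the endpoint carrying its thick half-edge. Under this orientation a vertex $v$ is adjacent only to thick half-edges precisely when every edge incident to $v$ points toward $v$, that is, when $v$ has out-degree zero; so it suffices to produce such a vertex.

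First I would observe that this orientation is acyclic: since $G$ is a tree it contains no cycle at all, hence no directed cycle. Therefore, starting from any vertex $v_0$ and repeatedly moving along an outgoing edge — i.e.\ an edge whose thin half-edge sits at the current vertex — produces a directed walk $v_0\to v_1\to v_2\to\cdots$ in which no vertex can recur, because a recurrence would close up a directed cycle. As $G$ is finite the walk must terminate, and it can only terminate at a vertex with no outgoing edge, i.e.\ a vertex adjacent only to thick half-edges. (The degenerate case of a one-vertex tree is immediate, since that vertex has no incident half-edge and is vacuously adjacent only to thick ones.)

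As an alternative more in the combinatorial spirit of this section, I could instead induct on the number of vertices of $G$: the one-vertex case is vacuous, and in the inductive step I pick a leaf $\ell$ with its unique incident edge $e$. If the half-edge of $e$ at $\ell$ is thick we are done; otherwise it is thin, its partner at the other endpoint $w$ is thick, and deleting $\ell$ and $e$ yields a smaller tree $G'$ still satisfying the hypotheses, to which the inductive hypothesis applies, giving a vertex adjacent only to thick half-edges in $G'$. If that vertex is not $w$ its incident half-edges are unchanged in $G$ and we are done; if it is $w$, restoring $e$ only adds a thick half-edge at $w$, so $w$ is still adjacent only to thick half-edges in $G$. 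I do not anticipate a real obstacle; the only steps demanding a word of care are the acyclicity observation in the first argument and, in the inductive version, checking that the vertex returned for $G'$ remains good for $G$ after the deleted leaf edge is restored.
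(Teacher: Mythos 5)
Your proposal is correct. Your second argument (induction on the number of vertices via a leaf) is essentially the paper's own proof, including the same case analysis: if the leaf's half-edge is thin, delete the leaf, apply the inductive hypothesis to the smaller tree, and note that restoring the deleted edge only adds a \emph{thick} half-edge at the other endpoint — you are in fact slightly more careful than the paper in spelling out the base case and the restoration step. Your first argument is a genuinely different packaging of the same idea: orient each edge toward its thick half-edge and find a sink by following a directed walk, using that a tree has no cycles and is finite. This orientation viewpoint is arguably cleaner (it makes the statement an instance of ``every finite acyclic orientation has a sink'') and generalizes immediately beyond trees to any finite graph whose thick-ward orientation is acyclic, whereas the paper's induction leans on the existence of a leaf; for the purposes of this paper both deliver the lemma with equal ease.
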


\begin{proof}
This can be shown by induction over the number $n$ of vertices of $G$. For $n=2$ it is obviously true. If $n>2$, there is a $1$-valent vertex $v$ of $G$ since $G$ is a tree. There are two cases: either $v$ is adjacent to a thick half-edge, then we are done or $v$ is adjacent to a thin half-edge. If $v$ is adjacent to a thin half-edge, then remove this edge and $v$ from $G$. The graph $G'$ obtained this way has one vertex less than $G$ such that there is a vertex $v'\in G'$ that is only adjacent to thick half-edges. Again there are two cases: if $v'$ is not connected to $v$ in $G$, then we are done. Otherwise, the edge connecting $v'$ to $v$ in $G$ is thick at $v'$ since it is thin at $v$.
\end{proof}

\begin{theorem}\label{theorem:CR_count=floor_diag_count}
For notation, see \ref{notation}. Let $d\in\mathbb{N}_{>0}$ and let $\Delta_d$ be its associated degree. The number of rational tropical curves satisfying point and cross-ratio conditions (see definition \ref{definition:general_position_I}) equals the number obtained from counting floor diagrams (see definition \ref{definition:number_floor}). More precisely, the equality
\begin{align*}
N_{0,n}\left(\lambda'_1,\dots,\lambda'_l\right)
=
N_{0,n}^{\textrm{floor}}\left(\lambda_1,\dots,\lambda_l\right)
\end{align*}
holds.
\end{theorem}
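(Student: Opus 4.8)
The plan is to mimic the strategy behind Theorem \ref{theorem:same_numbers}, replacing lattice path subdivisions by cross-ratio floor diagrams and using Construction \ref{construction:floor_decomposed_curve_to_CR_floor_diag} as the dictionary. First I would invoke Theorem \ref{thm:ZSFSSG_section_2} to reduce the claim to the degenerated count, i.e.\ it suffices to show $N_{0,n}\left(\lambda_1,\dots,\lambda_l\right)=N_{0,n}^{\textrm{floor}}\left(\lambda_1,\dots,\lambda_l\right)$. Then I would fix the points $p_1,\dots,p_n$ in a stretched configuration (small $y$-spread, large $x$-distances) so that by Lemma \ref{lemma:floor_decomposed_4_points} every tropical curve contributing to $N_{0,n}\left(\lambda_1,\dots,\lambda_l\right)$ is floor decomposed; here the hypothesis that all cross-ratios have $4$ points is exactly what is needed to apply that lemma. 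Let $\mathcal{R}_{0,n}\left(\lambda_1,\dots,\lambda_l\right)$ denote the set of such curves, each weighted by $\mult(C)=\prod_{v}\mult_{\ev}(v)\cdot\omega_v(\sigma_C)$ from Theorem \ref{thm:ZSFSSG_section_2}, and consider the map $\Phi\colon C\mapsto \mathcal{F}_C$ from Construction \ref{construction:floor_decomposed_curve_to_CR_floor_diag}.

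The first technical step is to check that $\Phi$ is well-defined, i.e.\ that $\mathcal{F}_C$ really is a cross-ratio floor diagram satisfying $\lambda_1,\dots,\lambda_l$. Axioms (1) and (2) are immediate from the construction (thin versus thick half-edges encode fixed versus free components, $\delta_v$ collects the ends attached to the floor of $p_v$, and $s_v$ is the floor size). Axiom (4), the balancing condition of $\mathcal{F}_C$, follows from the balancing condition of $C$ projected to the $x$-direction after cutting elevators. Axiom (3), the count $2-2s_v+|\lambda_{v}|$ of thick edges at $v$, is where one must work: it comes from the local valence identity $\val(v)=3+\#\lambda_v$ of Lemma \ref{lemma:description_of_X} applied to the floor component of $p_v$, combined with the fact (noted after Definition \ref{definition:fixed_free_components}) that exactly two of the components of a vertex are fixed, so that the number of thick half-edges and the number of free local directions agree. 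That $\mathcal{F}_C$ \emph{satisfies} the $\lambda_j$ in the sense of Definition \ref{definition:floor_diag_satisfies_CRC} is precisely the statement that the four marked objects of $\lambda_j$, transported to the floor at which $\lambda_j$ sits in $C$, lie in four distinct directions — which is Lemma \ref{lemma:nice_property}.

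The heart of the argument is the fibrewise multiplicity identity
\[
\sum_{C\in\Phi^{-1}(\mathcal{F})}\mult(C)=\mult(\mathcal{F})=\prod_{i=1}^n\mult(\mathcal{F}_i).
\]
Here I would use a splitting/gluing argument for the moduli space $\mathcal{M}_{0,n}\left(\mathbb{R}^2,\Delta_d\right)$ along the elevators of a floor decomposed curve, in the spirit of Proposition 5.3--5.4 of \cite{MikhalkinBrugalle} and of the local multiplicativity results Lemma \ref{lemma:local_multiplicities} and Corollary \ref{corollary:resolution_weights_local}: cutting the elevators turns a floor component into a tropical curve of some degree $\Delta(\alpha,\beta)$ with a point condition $\ev_i^*(p_i)$, height conditions $\partial\ev_k^*(y_k)$ on the cut (now horizontal) ends as in Lemma \ref{lemma:ev_horizontal_ends}, and the transported cross-ratios $\ft_{\tilde\lambda_{i_j}}^*(0)$. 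The multiplicity of such a local piece is $\mult(\mathcal{F}_i)$ by Definition \ref{definition:multiplicities_cross-ratio_floor_diagrams}, and by the product formula of Theorem \ref{thm:ZSFSSG_section_2} (equivalently Proposition \ref{prop:mult_X} together with Corollary \ref{corollary:resolution_weights_local} and Lemma \ref{lemma:local_multiplicities}) the multiplicity of any $C$ over $\mathcal{F}$ factors as a product of the local vertex contributions grouped by floor, so summing over the finitely many ways to reassemble local pieces into a genus-zero curve with the prescribed elevator weights yields exactly $\prod_i\mult(\mathcal{F}_i)$. Conversely, Lemma \ref{lemma:vertex_only_thick_edges} guarantees that any cross-ratio floor diagram satisfying $\lambda_1,\dots,\lambda_l$ is realized by at least one floor decomposed curve (one builds the curve floor by floor starting from a vertex with only thick half-edges, which corresponds to a floor whose elevators all run to the right), so $\Phi$ is surjective onto the set of diagrams counted by $N_{0,n}^{\textrm{floor}}$. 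Summing the displayed identity over all cross-ratio floor diagrams then gives the theorem.

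\textbf{Expected main obstacle.} The delicate point is the gluing step: one must check that the local intersection numbers $\mult(\mathcal{F}_i)$ computed on the auxiliary spaces $\mathcal{M}_{0,n}\left(\mathbb{R}^2,\Delta(\alpha,\beta)\right)$ genuinely assemble — with no extra lattice index or overcounting from permuting parallel elevators — into the global degree $N_{0,n}\left(\lambda_1,\dots,\lambda_l\right)$, i.e.\ that the "cut along elevators'' operation is compatible with both the $\ev$-$\ft$-determinant and the resolution weights $\omega_v$. This is where the remark following Corollary \ref{corollary:ZSFSSG_section}, that the whole theory extends verbatim to tangency conditions with the toric boundary, has to be used carefully, and where a stretched-configuration transversality argument (ensuring every contributing curve is simple and floor decomposed, and that the cut pieces meet the height conditions transversally) is needed.
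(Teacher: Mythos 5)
Your proposal is correct and follows essentially the same route as the paper: reduce to the degenerated count via Theorem \ref{thm:ZSFSSG_section_2}, use Lemma \ref{lemma:floor_decomposed_4_points} to force floor decomposition, map curves to floor diagrams via Construction \ref{construction:floor_decomposed_curve_to_CR_floor_diag}, establish surjectivity by building preimages floor by floor starting from a vertex with only thick half-edges (Lemma \ref{lemma:vertex_only_thick_edges}), and conclude with the fibrewise identity $\sum_{C\in\phi^{-1}(\mathcal{F})}\mult(C)=\mult(\mathcal{F})$ from the locality of multiplicities. The only difference is one of emphasis: you spell out the verification of the floor diagram axioms and flag the gluing compatibility as a concern, while the paper treats both as immediate consequences of the construction and of the local product formula.
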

\begin{proof}
We use theorem \ref{thm:ZSFSSG_section_2} showing that $N_{0,n}\left(\lambda'_1,\dots,\lambda'_l\right)$ equals the number of tropical curves satisfying the degenerated cross-ratio conditions $\lambda_1,\dots,\lambda_l$.

Let $p_1,\dots,p_n\in\mathbb{R}^2$ be points as in lemma \ref{lemma:floor_decomposed_4_points}. Let $\mathcal{R}_{0,n}\left(\lambda_1,\dots,\lambda_l\right)$ denote the set of degenerated tropical curves satisfying degenerated cross-ratio constraints, that is $\mathcal{R}_{0,n}\left(\lambda_1,\dots,\lambda_l\right)$ denotes the set of elements that contribute to $N_{0,n}\left(\lambda_1,\dots,\lambda_l\right)$.
Then all curves in $\mathcal{R}_{0,n}\left(\lambda_1,\dots,\lambda_l\right)$ are floor decomposed. Let $C$ be such a curve. By construction \ref{construction:floor_decomposed_curve_to_CR_floor_diag} there is a cross-ratio floor diagram $\mathcal{F}_C$ associated to $C$. Recall that all weights are local (see theorem \ref{thm:ZSFSSG_section_2}), hence $\mathcal{F}_C$ contributes to $N_{0,n}^{\textrm{floor}}\left(\lambda_1,\dots,\lambda_l\right)$ since cutting $C$ along its elevators yields $\mult({\mathcal{F}_C}_i)\neq 0$ for all pieces of $\mathcal{F}_C$.

Let $\mathcal{F}_{0,n}\left(\lambda_1,\dots,\lambda_l\right)$ denote the set of elements that contribute to $N_{0,n}^{\textrm{floor}}\left(\lambda_1,\dots,\lambda_l\right)$. The arguments above show that
\begin{align*}
\phi:\mathcal{R}_{0,n}\left(\lambda_1,\dots,\lambda_l\right)&\to \mathcal{F}_{0,n}\left(\lambda_1,\dots,\lambda_l\right)\\
C&\mapsto \mathcal{F}_C
\end{align*}
is a well-defined map. We want to show that $\phi$ is onto by constructing preimages. Let $\mathcal{F}\in\mathcal{F}_{0,n}\left(\lambda_1,\dots,\lambda_l\right)$. Using lemma \ref{lemma:vertex_only_thick_edges}, there is a vertex $v_i$ of $\mathcal{F}$ such that $v_i$ is only adjacent to thick half-edges. Let $\left(\mathcal{F}_i,\delta_{v_i},s_{v_i},|\lambda_{v_i}|,\tilde{\lambda}_{i_1},\dots,\tilde{\lambda}_{i_{|\lambda_{v_i}|}}\right)$ be the piece of $\mathcal{F}$ that includes $v_i$. The weighted incoming elevators and ends of $\mathcal{F}_i$ induce an unordered partition $\alpha^{(i)}$ and the weighted outgoing elevators and ends of $\mathcal{F}_i$ induce $\beta^{(i)}$, respectively. Since $\mult(\mathcal{F}_i)\neq 0$ there is a curve $C_i\in\ev^*_i(p_i) \cdot \prod_{j=1}^{|\lambda_{v_i}|}\ft_{\tilde{\lambda}_{i_j}}^*\left( 0\right) \cdot \mathcal{M}_{0,n}\left(\mathbb{R}^2,\Delta\left(\alpha^{(i)},\beta^{(i)}\right) \right)$ (see definition \ref{definition:degree}) that is fixed by $p_i,\tilde{\lambda}_{i_1},\dots,\tilde{\lambda}_{i_{|\lambda_{v_i}|}}$. Remove $v_i$ and its adjacent edges from $\mathcal{F}$. The resulting graph might be disconnected. Let $K$ be a component of this graph. Using lemma \ref{lemma:vertex_only_thick_edges}, there is a vertex $v_j$ of $K$ such that $v_j$ is only adjacent to thick half-edges. There are two cases:
\begin{itemize}
\item[(1)]
If $v_j\in\mathcal{F}$ is only adjacent to thick half-edges, then associated a curve $C_j$ to $v_j$ like we did before for $v_i$.
\item[(2)]
There is an edge $e$ in $\mathcal{F}$ that connects $v_i$ and $v_j$ such that the thick half-edge of $e$ is adjacent to $v_i$. Let $y_e\in\mathbb{R}$ be the height of the horizontal end associated to $e$ in $C_i$. Now that we fixed that height, we can argue like before: Let $\left(\mathcal{F}_j,\delta_{v_j},s_{v_j},|\lambda_{v_j}|,\tilde{\lambda}_{j_1},\dots,\tilde{\lambda}_{j_{|\lambda_{v_i}|}}\right)$ be the piece of $\mathcal{F}$ that includes $v_j$. The weighted incoming elevators and ends of $\mathcal{F}_j$ induce $\alpha^{(j)}$ and $\beta^{(j)}$ as before. Since $\mult(\mathcal{F}_j)\neq 0$ there is a curve $C_j\in\ev^*_j(p_j) \cdot \partial\ev_e^*\left( y_e \right) \cdot\prod_{z=1}^{|\lambda_{v_j}|}\ft_{\tilde{\lambda}_{z_j}}^*\left( 0\right) \cdot \mathcal{M}_{0,n}\left(\mathbb{R}^2,\Delta\left(\alpha^{(j)},\beta^{(j)}\right) \right)$ that is fixed by $p_i,\tilde{\lambda}_{i_1},\dots,\tilde{\lambda}_{i_{|\lambda_{v_i}|}}$.
\end{itemize}
Iterating this procedure gives us a curve $C_t$ for each piece $\mathcal{F}_t$ of $\mathcal{F}$ such that $C_1,\dots,C_n$ can be glued together by construction. Denote the curve obtained from this glueing by $C$. The multiplicity of $C$ is given by
\begin{align*}
\mult(C)=\prod_{t=1}^n \mult(C_t)
\end{align*}
because of theorem \ref{thm:ZSFSSG_section_2}. Therefore $C\in\phi^{-1}(\mathcal{F})$.

Note that the procedure above does not depend on the choice of $C_t$ we associated to each $\mathcal{F}_t$. Hence
\begin{align*}
\mult(\mathcal{F})=\sum_{C\in\phi^{-1}(\mathcal{F})} \mult(C)
\end{align*}
holds.
\end{proof}

We can now apply corollary \ref{corollary:ZSFSSG_section} and  the correspondence theorem \ref{thm:correspondence_thm_CRC} such that the next corollary follows immediately.

\begin{corollary}
We use the notation from \ref{notation}. If we require in addition to the assumptions of theorem \ref{thm:correspondence_thm_CRC} that every cross-ratio has $4$ points (see definition \ref{definition:4_points_CR}) and that the given degree is $\Delta_d$, the equality
\begin{align*}
N^{\textrm{class}}_{0,n}\left( \mu_1,\dots,\mu_l \right)
=
N_{0,n}^{\textrm{floor}}\left(\lambda_1,\dots,\lambda_l\right)
\end{align*}
holds.
\end{corollary}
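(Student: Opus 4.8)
The plan is to obtain the identity by simply concatenating results that are already available: the tropical–algebraic correspondence theorem of Tyomkin (Theorem \ref{thm:correspondence_thm_CRC}), the invariance of the tropical count under degeneration of cross-ratios (Theorem \ref{thm:ZSFSSG_section_2}, equivalently Corollary \ref{corollary:ZSFSSG_section}), and the floor-diagram count (Theorem \ref{theorem:CR_count=floor_diag_count}). Since every cross-ratio $\mu_i$ has $4$ points and the degree is $\Delta_d$, both halves of the argument concern curves of the same degree $\Delta(\Sigma_d)=\Delta_d$, and the extra hypotheses needed downstream are met.

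First I would invoke Theorem \ref{thm:correspondence_thm_CRC} to write $N^{\textrm{class}}_{0,n}(\mu_1,\dots,\mu_l)=N_{0,n}(\lambda'_1,\dots,\lambda'_l)$, where $\lambda'_i$ is the tropicalization of $\mu_i$ obtained by applying the valuation map (Remark \ref{remark:tropicalized_cross-ratios}). Next I would pass from $N_{0,n}(\lambda'_1,\dots,\lambda'_l)$ to the floor-diagram count: a classical cross-ratio with $4$ points tropicalizes to a tropical cross-ratio with $4$ points in the sense of Definition \ref{definition:4_points_CR}, and its degeneration $\lambda_i$ again has $4$ points, so Theorem \ref{theorem:CR_count=floor_diag_count} applies and yields $N_{0,n}(\lambda'_1,\dots,\lambda'_l)=N_{0,n}^{\textrm{floor}}(\lambda_1,\dots,\lambda_l)$. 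Chaining the two equalities gives the corollary.

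The only point requiring care — the \emph{main obstacle}, such as it is — is checking that the general-position hypotheses of the cited results are mutually compatible. Theorem \ref{thm:correspondence_thm_CRC} asks for the classical conditions $q_1,\dots,q_n,\mu_1,\dots,\mu_l$ to be in general position, whereas Theorem \ref{theorem:CR_count=floor_diag_count} is proved with the tropical points $p_1,\dots,p_n$ in a stretched configuration. This discrepancy is harmless: the number $N_{0,n}(\lambda'_1,\dots,\lambda'_l)$ is independent of the positions of the points and of the lengths of the cross-ratios (Remark \ref{Remark:N_(0,n)independent of positions/lengths}, via the rational-equivalence facts in Remark \ref{remark:facts_about_rational_equivalence}), so one may compute it using a stretched configuration without changing its value, which is exactly the input Theorem \ref{theorem:CR_count=floor_diag_count} consumes. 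With these compatibilities noted, the proof is a one-line concatenation of the two displayed equalities; no new estimate or construction is needed, and the corollary follows immediately as announced.
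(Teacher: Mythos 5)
Your proposal is correct and follows essentially the same route as the paper, which likewise obtains the corollary immediately by concatenating the correspondence theorem (via Corollary \ref{corollary:ZSFSSG_section}) with Theorem \ref{theorem:CR_count=floor_diag_count}. Your extra remark that the stretched point configuration is compatible with the general-position hypotheses, justified by Remark \ref{Remark:N_(0,n)independent of positions/lengths}, is a sensible clarification but does not change the argument.
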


\begin{remark}
The results of this section are not restricted to degree $\Delta_d$ curves and can be generalized to Hirzebruch surfaces or other  surfaces with $h$-transverse polytopes (see \cite{hTransverse}) since the cross-ratio floor diagram techniques can be extended to these degrees in a straight-forward way.
\end{remark}

\begin{example}
Fix the degree $\Delta_3$ (i.e. $d=3$), let $p_1,\dots,p_7$ be points and let $\lambda=\lbrace x_1,\dots,x_4\rbrace$ be a degenerated cross-ratio. We want to determine the number $N_{0,7}\left(\lambda\right)$ using floor diagrams. For that draw all floor diagrams of degree $\Delta_3$ on $7$ vertices that satisfy the cross-ratio $\lambda$. Since we have $7$ points, there are no floors of size $3$ or $2$. Figure \ref{Example_complete_flor_diagram} shows all possible floor diagrams. Note that in this example we do not need all discrete data a floor diagram is equipped with, i.e. floors of size $1$ are drawn white and floors of size $0$ are drawn black (instead of specifying $s_{v_i}$ for each floor), the number of cross-ratios satisfied at each floor is obvious (we only have one cross-ratio) and the labels of ends adjacent to each floor are dropped here, so we need to add a factor of $(d!)^3$ to the final count. By considering the multiplicities of each piece $\mathcal{F}_i$ of a floor diagram $\mathcal{F}$ in Figure \ref{Example_complete_flor_diagram}, we end up with multiplicity $1$ for all floor diagrams shown in Figure \ref{Example_complete_flor_diagram}. Hence 
\begin{align*}
N_{0,7}\left(\lambda\right)&=4*(3!)^3=864.
\end{align*}
Note that this number is not the same as in example \ref{example:Kontsevich_CR_2} because we considered a cross-ratio with $4$ points here, whereas we considered a cross-ratio with $2$ points in example \ref{example:Kontsevich_CR_2}.

\begin{figure}[H]
\centering
\def\svgwidth{230pt}
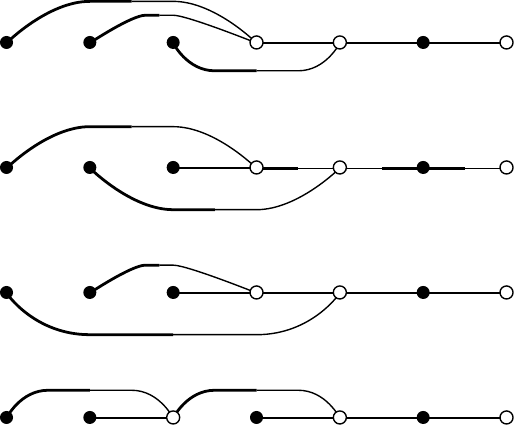
\caption{Floor diagrams with floors of size $0$ (black) and $1$ (white).}
\label{Example_complete_flor_diagram}
\end{figure}

\end{example}

\hyphenation{Kaisers-lautern} 
\bibliographystyle{alpha}
\bibliography{literatur}

\end{document}